 
\documentclass{personalart}

\usepackage{personalart_sem}

\title{Explicit computation of the first \'etale cohomology on curves}
\author{J.~Jin \\ jjin@mpim-bonn.mpg.de \\ Max-Planck-Institut f\"ur Mathematik \\ Vivatsgasse 7 \\ 53111 Bonn \\ Germany}
\date{17 November 2017}

\begin{document}
\maketitle

\subsubsection*{Abstract}

In this paper, we describe an algorithm that, for a smooth connected curve $X$ over a field $k$ with normal completion having arithmetic genus $p_a(X)$, a finite locally constant sheaf $\sh A$ on $X_{\et}$ of abelian groups of torsion invertible in $k$, represented by a smooth curve with normal completion having arithmetic genus $p_a(\sh A)$ and degree $n$ over $X$, computes the first \'etale cohomology $\hl^1(X_{k^\sep,\et},\sh A)$ and the first \'etale cohomology with proper support $\hl^1_c(X_{k^\sep,\et},\sh A)$ as sets of torsors, in arithmetic complexity exponential in $n^{\log n}$, $p_a(X)$, and $p_a(\sh A)$.
This is done via the computation of a groupoid scheme classifying the relevant torsors (with extra rigidifying data).

\subsubsection*{Acknowledgements}

This paper is in part based on Chapter 3 of the author's dissertation, which was funded by the Netherlands Organisation for Scientific Research (project no. 613.001.110), and the author thanks his supervisors Bas Edixhoven and Lenny Taelman for their guidance during the author's PhD candidacy.
The author also thanks the Max-Planck-Institut f\"ur Mathematik in Bonn for their support during the production of this paper.

\section{Introduction}

The motivating question for this paper is the following;  this question is posed e.g.~by Poonen, Testa, and van Luijk in \cite{poonenetal15}.

\begin{qn} \label{q:main}
  Is there an algorithm that takes an algebraic variety $X$ over a field $k$, and a positive integer $n$ invertible in $k$, and computes $\Gal(k^\sep/k)$-modules isomorphic to the \'etale cohomology groups $\hl^q(X_{k^\sep,\et},\bb Z/n \bb Z)$ for $q=0,1,\dotsc,2\dim X$?
\end{qn}

We assume affine schemes of finite presentation over some base ring $R$ to be given by generators and relations over $R$, and we assume $X$ to be given by a gluing datum of affine varieties over $k$.
The output will be given as a pair $(l,X)$ of a Galois extension $l/k$ and a finite $\Gal(l/k)$-set $X$.
\cite[Th.~finitude]{sga45} guarantees that in the situation of this question, the groups $\hl^q(X_{k^\sep,\et},\bb Z/n \bb Z)$ are indeed finite.

The existence of an algorithm as in the question computing the \'etale cohomology groups in time polynomial in $n$ for a fixed variety over $\bb Q$ implies, via the Lefschetz trace formula \cite[Rapport]{sga45} and by the argument of \cite[Thm.~15.1.1]{couveignesedixhoven11}, the computation of the number of $\bb F_q$-points of some fixed finite type scheme $X$ (over $\bb Z$) in time polynomial in $\log q$.
Here, we note that the problem of computing $\#X(\bb F_q)$ has many efficient solutions in practice, see e.g.~\cite{kedlaya01}, \cite{lauderwan08}, and \cite{harvey14};  however, none of them run in time polynomial in the characteristic of the finite field.
One other application of a positive answer to the question above is the computation of N\'eron-Severi groups by Poonen, Testa, and van Luijk in \cite{poonenetal15} using the computation of the \'etale cohomology groups.

\autoref{q:main} is already known to have a positive answer.
In 2015, Poonen, Testa, and van Luijk, in the aforementioned article \cite{poonenetal15}, showed that the \'etale cohomology groups are computable if $X$ is a smooth, projective, geometrically irreducible variety over a field of characteristic $0$.
Later that year, Madore and Orgogozo \cite{madoreorgogozo15} showed that they are computable for any variety over any field, and, assuming computations with constructible sheaves can be performed, with coefficient sheaf any constructible sheaf of abelian groups (of torsion invertible in the base field).
However, both of these results are fundamentally merely computability results, without any bounds on the complexity, even for a fixed instance.

So a natural extension of \autoref{q:main} is (in addition to allowing more general coefficients) to also ask for explicit upper bounds for the complexity;  beyond the classical case of smooth curves with constant coefficients, the author doesn't know of any such result.
In this paper, we will describe an algorithm computing, for smooth connected curves, the first \'etale cohomology group (proper support or not) with coefficients in a finite locally constant sheaf of abelian groups (of torsion invertible in $k$), together with theoretical upper bounds for the complexity.

We will assume the field $k$ is given together with black box field operations (see \autoref{h:computation} for more details) and we measure the complexity only in the number of field operations performed.
While this is a good approximation for the time complexity in case $k$ is finite, for infinite $k$ this is usually not the case because of coefficient size growth.
Algorithms will be deterministic (except for the use of the black boxes);  for an actual implementation of the algorithm to be presented, it may be more efficient in practice to use randomised algorithms.
Moreover, the choice of algorithms is motivated by their theoretical worst-case complexities;  for an actual implementation, it may be significantly more efficient in practice to use different algorithms than the ones used in this paper.

With this in mind, let us state this paper's main theorem, deferring the description of the in- and output mainly to \autoref{h:standard-modules} and \autoref{h:reduction}.

\begin{thm} \label{d:main}
  There exist an algorithm that takes as input a smooth connected curve $X$ over $k$, and a (curve representing a) finite locally constant sheaf $\sh A$ of abelian groups of degree $n$ over $X$ with $n$ invertible in $k$, and return as output $\hl^1(X_{k^{\sep}},\sh A|_{X_{k^{\sep}}})$ (resp. $\hl^1_c(X_{k^{\sep}},\sh A|_{X_{k^{\sep}}})$) as $\Gal(k^{\sep}/k)$-modules in a number of field operations exponential in $n^{\log n}$, $p_a(X)$, and $p_a(\sh A)$, where $p_a$ denotes the arithmetic genus of the normal completion.
\end{thm}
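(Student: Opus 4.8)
The plan is to use the classical identification of $\hl^1(X_{k^\sep,\et},\sh A)$ with the set of isomorphism classes of $\sh A$-torsors on $X_{k^\sep}$, and to realise these torsors---after adding rigidifying data that kills their automorphisms---as the $k^\sep$-points of an explicit finite $k$-scheme. Fix a geometric point $x_0$ of $X$, write $A$ for the finite abelian group $\sh A_{x_0}$ with its monodromy action, and note that a rigidified torsor, i.e.\ an $\sh A$-torsor $T$ together with a point of its fibre $T_{x_0}$, has no nontrivial automorphisms; such objects therefore form a sheaf rather than a stack. The monodromy action of $\pi_1^{\et}(X_{k^\sep},x_0)$ on the marked point identifies the isomorphism classes of rigidified torsors with the finite group $Z^1$ of continuous $1$-cocycles, and one has $\hl^1 = Z^1/B^1$, where $B^1$ is the image of the coboundary map $A\to Z^1$ whose kernel is exactly $\hl^0(X_{k^\sep,\et},\sh A)=A^{\pi_1}$. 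The computation thus splits into producing the finite $k$-scheme $\widetilde M$ with $\widetilde M(k^\sep)=Z^1$ as a Galois set, and then forming the quotient by the explicitly computable subgroup $B^1$; the scheme $\widetilde M$ together with this action of $B^1$ is the groupoid scheme promised in the abstract, with $\pi_0$ equal to $\hl^1$.

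To construct $\widetilde M$ I would first bound the geometry of the torsors. An $\sh A$-torsor $T\to X$ is finite \'etale of degree $n$, so by Riemann--Hurwitz its normal completion has arithmetic genus $O(n\,p_a(X))$, and $T$ admits a projective embedding of dimension bounded in terms of $n$, $p_a(X)$, and $p_a(\sh A)$. The functor of rigidified torsors is then represented by a locally closed subscheme $\widetilde M$ of a suitable Hilbert scheme parametrising the curve $T$, its degree-$n$ map to $X$, the $\sh A$-action, and the marked point; because $Z^1$ is finite this $\widetilde M$ is zero-dimensional, indeed finite \'etale over $k$ since the torsor deformation problem is unobstructed (the torsion of $\sh A$ being invertible in $k$). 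I would compute defining equations for $\widetilde M$ by elimination inside this Hilbert-scheme presentation, obtaining the finite $k$-algebra $\sh O(\widetilde M)$; factoring it over a finite Galois extension $l/k$ then exhibits $Z^1$ as a $\Gal(l/k)$-set, and since the whole construction is defined over $k$ the Galois action is obtained without extra work.

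Recovering $\hl^1$ is then Galois-equivariant bookkeeping: compute $\hl^0=A^{\pi_1}$ as the Galois-stable subgroup of invariants, read off $B^1$ as the image of the coboundary $A\to Z^1$ (the trivial torsor carrying its various rigidifications), and return $\hl^1=Z^1/B^1$ as a $\Gal(k^\sep/k)$-module defined over $l$. For the proper-support variant I would pass to the normal completion $\bar X$ with boundary $S=\bar X\setminus X$, a finite Galois-stable set of closed points, and use the excision long exact sequence relating $\hl^\bullet_c(X_{k^\sep},\sh A)$, the cohomology $\hl^\bullet(X_{k^\sep},\sh A)$, and local terms supported on $S$. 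These local terms are the inertia-invariants $\sh A^{I_s}$ and the tame local $\hl^1$ of the punctured henselian neighbourhoods, each computable from the ramification of the curve representing $\sh A$ along $S$; feeding them into the sequence recovers $\hl^1_c$ from $\hl^1$, $\hl^0$, and the boundary data. (Alternatively, Poincar\'e duality expresses $\hl^1_c$ through $\hl^1$ of the Tate-twisted dual sheaf, to which the same algorithm applies.)

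The main obstacle is the explicit complexity control of the middle step: one must show that the Hilbert-scheme presentation of $\widetilde M$ has controlled degree and that its equations are computable within the stated bound. The exponential dependence on $p_a(X)$ and $p_a(\sh A)$ enters through the elimination/Gr\"obner computations on curves of genus $O(n\,p_a)$ and through the resulting degree of the finite scheme $\widetilde M$, whose order is $|Z^1|=|\hl^1|\cdot|A|/|\hl^0|$. The factor exponential in $n^{\log n}$ tracks the combinatorics of the admissible torsor structures over which one must stratify---reflecting, for instance, the $n^{O(\log n)}$ subgroups of an abelian group of order $n$---and propagating these bounds rigorously through the moduli construction, while simultaneously controlling the Galois extension $l$ and the boundary contributions to $\hl^1_c$, is where the real difficulty lies.
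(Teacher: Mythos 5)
Your overall strategy---rigidify torsors by marking a point of the fibre over a base point so that the moduli problem becomes a sheaf rather than a stack, identify the resulting set of isomorphism classes with the finite group $Z^1$ of continuous cocycles, and recover $\hl^1$ as $Z^1/B^1$---is mathematically sound, and your excision/duality treatment of $\hl^1_c$ could be made to work. But there is a genuine gap at precisely the step you defer to your final paragraph: producing defining equations for the finite scheme $\widetilde M$ \emph{with the stated complexity bounds}. Since mere computability of these groups was already known (Poonen--Testa--van Luijk, Madore--Orgogozo), the entire content of the theorem is the explicit bound, and ``compute defining equations for $\widetilde M$ by elimination inside this Hilbert-scheme presentation'' does not supply it: effective constructions of Hilbert schemes, of a projective embedding of the universal torsor, and of the locally closed locus of rigidified torsors inside the Hilbert scheme carry no a priori control on the number or degree of the resulting equations, and establishing such control is essentially the whole difficulty. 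As written, the proposal reduces the theorem to an unproved effectivity statement rather than proving it.

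It is instructive to compare with what the paper actually does, because it deliberately avoids both the Hilbert scheme and any attempt to kill all automorphisms. Instead of a marked point, the rigidifying data are coordinate-like: every torsor is pushed forward to $\bb P^1_k$ and equipped with an isomorphism of its underlying $\sh O_{\bb P^1_k}$-module with a standard module $\bigoplus_i \sh O_{\bb P^1_k}(b_i)$, a basis of its fibre over $0$, and section data controlling smoothness over $\infty$. These rigidifications have positive-dimensional automorphism groups, so one obtains not a finite scheme but an affine groupoid scheme $\mdl R \rightrightarrows \mdl U$ whose size (number of variables, relations, and degrees) is bounded explicitly, the torsor and \'etaleness conditions being expressed through discriminants and the finitely many admissible types $b$ being bounded via Riemann--Hurwitz. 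The key lemma is that the source and target maps of this groupoid have smooth, geometrically irreducible fibres, so that $\pi_0(\mdl U)\to\hl^1$ is a bijection; one then finds a point on every component by Noether normalisation and sorts the points into isomorphism classes using $\mdl R$. Likewise $\hl^1_c$ is handled uniformly as $j_!\sh A$-torsors on the normal completion via recollement (a torsor on the open part together with a trivialisation along the boundary), which avoids computing the local terms and the extension problem implicit in your excision sequence. To salvage your route you would need to replace the Hilbert-scheme step by an equally explicit parametrisation, at which point you would be redoing most of this work with the added burden of cutting out the finite scheme $\widetilde M$ itself rather than merely a groupoid presentation of the quotient.
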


More precise (and slightly more general) versions of this theorem will be given in \autoref{h:groupoid} and \autoref{h:reduction}.

\section{The geometric idea behind the algorithm}

Let $X$ be a smooth connected curve over a field $k$, and let $\sh G$ be a finite locally constant sheaf of groups.
Then the set $\hl^1(X_{k^{\sep}},\sh G|_{X_{k^{\sep}}})$, resp.~$\hl^1_c(X_{k^{\sep}},\sh G|_{X_{k^{\sep}}})$, is the set of isomorphism classes of $\sh G$-torsors on $X_{k^{\sep}}$, resp.~the set of isomorphism classes of $j_!\sh G$-torsors on $\nc X_{k^{\sep}}$.
Here, $j \colon X \to \nc{X}$ is the open immersion of $X$ into its normal completion $\nc{X}$.

\begin{rem}
  If $\sh G$ is a sheaf of abelian groups, a priori we have two possible definitions of $j_!\sh G$;  one arising from viewing $j_!$ as the left adjoint of $j_*$ on the category of sheaves of groups, and one arising from viewing $j_!$ as the left adjoint of $j_*$ on the category of sheaves of abelian groups.
  Let us call these $j^G_! \sh G$ and $j^A_! \sh G$ for now.
  There is a natural map $j^G_! \sh G \to j^A_! \sh G$, which induces an isomorphism on stalks since $j$ is an open immersion (and direct sums of zero, resp.~one object in the category of groups and that of abelian groups have the same underlying sets).
  Hence $j^G_! \sh G = j^A_! \sh G$, so there is no confusion possible if we just write $j_! \sh G$ in this case, like we did above.
\end{rem}

Now, in short, the idea behind the algorithm is to compute a moduli scheme of torsors with some additional structure (which is to be specified) and use this moduli scheme to compute the first cohomology.
The moduli space in question is constructed in \autoref{h:groupoid}.

We describe this idea in some more detail below, using the language of stacks.

Let $\stk T$ be the stack on $(\Sch/X)_{\fppf}$ of $\sh G$-torsors, resp.~the stack on $(\Sch/\nc{X})_{\fppf}$ of $j_!\sh G$-torsors;  note that this stack is presented by the group(oid) scheme $\sh G \to X$, resp.~$j_!\sh G \to \nc{X}$.
Let $f$ be the structure morphism $X \to \Spec k$, resp.~the structure morphism $\nc{X} \to \Spec k$, and let $p$ denote the morphism from the big \'etale topos to the small \'etale topos for which $p_*$ is the restriction to the small site.
Note that for this $p$, the functor $p^{-1}$ is the espace \'etal\'e functor.
Let $f_{\sbig,*}$ and $f_{\ssmall,*}$ denote the big and small pushforward, respectively.

We then have a stack $f_{\ssmall,*}p_* \stk T = p_*f_{\sbig,*} \stk T$ on $(\Spec k)_{\et}$, to which we can attach the sheaf $\pi_0(f_{\ssmall,*}p_* \stk T)$ on $(\Spec k)_{\et}$, and a morphism $f_{\ssmall,*}p_* \stk T \to \pi_0(f_{\ssmall,*}p_* \stk T)$ of stacks on $(\Spec k)_{\et}$.
The Galois set to be computed now corresponds to the sheaf $\pi_0(f_{\ssmall,*}p_* \stk T)$ on $(\Spec k)_{\et}$, or in other words, to the \'etale $k$-scheme $p^{-1} \pi_0(f_{\ssmall,*}p_* \stk T)$.

We show in \autoref{h:torsors} that the diagonal of $f_{\sbig,*}\stk T$ is representable and finite \'etale, which simply means that for any $k$-scheme $S$ and any two objects $X,Y$ of $f_{\sbig,*} \stk T(S)$, the corresponding sheaf $\Hom_{f_{\sbig,*} \stk T(S)}(X,Y)$ on $(\Sch/k)_{\et}$ is representable by a finite \'etale $S$-scheme.

In \autoref{h:groupoid} we compute a groupoid scheme $\mdl R \rightrightarrows \mdl U$ with $\mdl R$ and $\mdl U$ affine schemes of finite type over $k$, together with an obvious (non-explicit) morphism $[\mdl U/\mdl R] \to f_{\sbig,*} \stk T$ of stacks on $(\Sch/k)_{\et}$.
There, we also show that $p^{-1}p_*[\mdl U/\mdl R] \to p^{-1}f_{\ssmall,*}p_* \stk T$ is an equivalence (after some purely inseparable base change, but we ignore this technical point for now), and that the morphisms $\mdl R \rightrightarrows \mdl U$ are smooth and have geometrically irreducible fibres.

Hence we are (after some purely inseparable base change) in the situation of the following proposition.

\begin{prop} \label{d:stack-key-lemma}
  Let $\stk T$ be a stack on $(\Sch/k)_{\fppf}$ of which the diagonal is representable and finite \'etale.
  Let $\mdl R \rightrightarrows \mdl U$ be a groupoid scheme such that both morphisms $\mdl R \to \mdl U$ are smooth and have geometrically connected fibres, and such that $\mdl R$ and $\mdl U$ are of finite type over $k$.
  Let $[\mdl U/\mdl R] \to \stk T$ be a morphism of stacks on $(\Sch/k)_{\fppf}$ such that $p^{-1}p_*[\mdl U/\mdl R] \to p^{-1}p_* \stk T$ is an equivalence, or in other words, such that for each separable extension $l/k$, the functor $[\mdl U/\mdl R](l) \to \stk T(l)$ is an equivalence.
  Then the map $\mdl U(k^{\sep}) \to \pi_0(\stk T)(k^{\sep})$ is a $\Gal(k^{\sep}/k)$-equivariant surjection, and factors through an isomorphism $\pi_0(\mdl U) \to \pi_0(\stk T)$.

  If in addition the morphisms $\mdl R \to \mdl U$ have geometrically irreducible fibres, then the connected components of $\mdl U_{k^{\sep}}$ are irreducible.
\end{prop}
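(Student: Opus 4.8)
The plan is to compute both sides explicitly over $k^{\sep}$ and to match them $\Gal(k^{\sep}/k)$-equivariantly. Write $s,t\colon\mdl R\to\mdl U$ for the two structure morphisms and $e\colon\mdl U\to\mdl R$ for the unit, and let $\xi$ denote the tautological object of $[\mdl U/\mdl R]$ over $\mdl U$, so that $\mdl R=\mdl U\times_{[\mdl U/\mdl R]}\mdl U$ with the two projections $s,t$. First I would reduce $\pi_0(\stk T)(k^{\sep})$ to isomorphism classes: since $\pi_0(\stk T)$ is taken as a sheaf on the \emph{small} étale site and $k^{\sep}$ is separably closed, no sheafification occurs over $\Spec k^{\sep}$, so $\pi_0(\stk T)(k^{\sep})$ is simply the set of isomorphism classes of $\stk T(k^{\sep})$. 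Now $\mdl U\to[\mdl U/\mdl R]$ is smooth and surjective (it has the section $e$), so over the separably closed field $k^{\sep}$ every object of $[\mdl U/\mdl R](k^{\sep})$ lifts to a point of $\mdl U(k^{\sep})$; together with the hypothesis that $[\mdl U/\mdl R](k^{\sep})\to\stk T(k^{\sep})$ is an equivalence, this shows $\mdl U(k^{\sep})\to\pi_0(\stk T)(k^{\sep})$ is surjective, and it is $\Gal(k^{\sep}/k)$-equivariant because everything is defined over $k$.

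For the fibres I would use the presentation: two points $u_0,u_1\in\mdl U(k^{\sep})$ have isomorphic images in $\stk T(k^{\sep})$ iff, by the equivalence, their images in $[\mdl U/\mdl R](k^{\sep})$ are isomorphic, iff there is an arrow $r\in\mdl R(k^{\sep})$ with $s(r)=u_0$ and $t(r)=u_1$ (the $\mathrm{Isom}$ in the quotient stack being represented by $\mdl R=\mdl U\times_{[\mdl U/\mdl R]}\mdl U$). Thus the map factors through the orbit set $\mdl U(k^{\sep})/\mdl R(k^{\sep})$ and induces a $\Gal$-equivariant bijection onto $\pi_0(\stk T)(k^{\sep})$. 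It remains to identify the orbits with the connected components of $\mdl U_{k^{\sep}}$. One inclusion is immediate: each orbit is $t(s^{-1}(u))$, the continuous image of the geometrically connected fibre $s^{-1}(u)$, hence connected, so every orbit lies in a single component.

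The reverse inclusion is the heart of the matter, and I would obtain it from a dimension count forcing the orbits to be open. Since $\Delta_{\stk T}$ is finite étale, the scheme $\mdl I:=\mdl U\times_{\stk T}\mdl U=\underline{\mathrm{Isom}}(\mathrm{pr}_1^*\xi,\mathrm{pr}_2^*\xi)$ is finite étale over $\mdl U\times\mdl U$, of dimension $2\dim\mdl U$, and the induced map $\mdl R\to\mdl I$ over $\mdl U\times\mdl U$ is bijective on $k^{\sep}$-points by the equivalence on separable points. As $\mdl I$ is smooth its rational points are dense, so $\mdl R_{k^{\sep}}\to\mdl I_{k^{\sep}}$ is dominant and $\dim\mdl R\geq2\dim\mdl U$; conversely the finite fibres of $\mdl I\to\mdl U\times\mdl U$ make $(s,t)\colon\mdl R\to\mdl U\times\mdl U$ quasi-finite, giving $\dim\mdl R\leq2\dim\mdl U$. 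Hence $s$ (and, via the inversion automorphism of $\mdl R$, also $t$) has relative dimension $\dim\mdl U$. Then $t|_{s^{-1}(u)}\colon s^{-1}(u)\to\mdl U$ is quasi-finite with smooth source of dimension $\dim\mdl U$; at the identity point $e(u)$ the tangent map of $t$ is injective on $T_{e(u)}s^{-1}(u)$ (quasi-finiteness) between spaces of equal dimension, so $t|_{s^{-1}(u)}$ is étale there and its image contains a neighbourhood of $u$. By homogeneity of the groupoid (translation by an arrow identifies the $s$-fibres over any two points of a common orbit compatibly with $t$), the same holds at every point of the orbit, so each orbit is open; being connected with open complement (a union of orbits), each orbit is a connected component.

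Finally, for the addendum I would argue directly once orbits equal components: if the fibres of $s$ are geometrically irreducible, the component containing $u$ equals $t(s^{-1}(u))$, the image of the irreducible $s^{-1}(u)$ under the continuous map $t$, hence irreducible. The step I expect to be most delicate is precisely the dimension count and the ensuing openness of orbits: this is where the finite étale diagonal of $\stk T$ is indispensable—without it, as $[\mathbb{A}^1/\mathbb{G}_m]$ or the trivial groupoid on $\mathbb{A}^1$ show, orbits can be strictly smaller than components—and one must also verify that $\mdl U$ is smooth over $k^{\sep}$ (which follows, since $t|_{s^{-1}(u)}$ is étale onto an open of $\mdl U$ out of the smooth $s^{-1}(u)$) in order to legitimise the tangent-space computation.
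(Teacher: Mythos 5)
Your skeleton is right — surjectivity, identification of the fibres with $\mdl R(k^{\sep})$-orbits, and the observation that each orbit $t(s^{-1}(u))$ is connected (resp.\ irreducible) and hence contained in a single component — and up to that point you match the paper. The gap is in the step you yourself flag as delicate: the proof that each component is a \emph{single} orbit. Your dimension-and-tangent-space argument does not go through as written. First, quasi-finiteness does \emph{not} imply injectivity of the tangent map ($x \mapsto x^2$ on $\bb A^1$ at the origin, or any Frobenius, is quasi-finite with vanishing differential), so you cannot conclude that $t|_{s^{-1}(u)}$ is \'etale at $e(u)$ from equality of dimensions plus quasi-finiteness. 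Second, the inputs to the dimension count are unjustified: $\mdl U$ is only assumed of finite type over $k$, so $\mdl I = \mdl U \times_{\stk T} \mdl U$ (finite \'etale over $\mdl U \times \mdl U$) need not be smooth, $k^{\sep}$ may be imperfect so rational points need not be dense even on reduced schemes, $\dim \mdl I = 2\dim\mdl U$ presumes both surjectivity of $\mdl I \to \mdl U \times \mdl U$ (a finite \'etale map may have degree $0$ over some components) and equidimensionality of $\mdl U$, and bijectivity of $\mdl R(k^{\sep}) \to \mdl I(k^{\sep})$ does not by itself make $(s,t)$ quasi-finite as a morphism of schemes.

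None of this machinery is needed. You already have the right object in hand: restrict $\mdl I$ to $\set*{u} \times U$, where $U$ is the connected component of $\mdl U_{k^{\sep}}$ containing $u$. This is a finite \'etale $U$-scheme (this is exactly where the hypothesis on the diagonal of $\stk T$ enters), its image in $U$ is open and closed, and it is non-empty because the identity gives a point over $(u,u)$; since $U$ is connected the image is all of $U$. Hence every $u' \in U(k^{\sep})$ admits an isomorphism $u'^*\xi \iso u^*\xi$ in $\stk T(k^{\sep})$, which lifts to $\mdl R(k^{\sep})$ by the full faithfulness you already invoked. This is the paper's argument (phrased there as surjectivity of $\Isom_{\stk T(U)}(Y_1,Y_2) \to U$ for the tautological family $Y_1$ and the constant object $Y_2$), it needs no smoothness of $\mdl U$, no density of rational points, and no tangent-space computation. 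With that replacement the rest of your write-up, including the irreducibility addendum, is fine.
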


\begin{proof}
  First note that the equivalence $p^{-1}p_*[\mdl U/\mdl R] \to p^{-1}p^* \stk T$ induces a surjective $\Gal(k^{\sep}/k)$-equivariant map $U(k^{\sep}) \to \pi_0(\stk T)(k^{\sep})$.

  Let $\gpt x \in \mdl U(k^{\sep})$ and let $j \colon U \to \mdl U_{k^{\sep}}$ be the open immersion of the connected component $U$ containing $\gpt x$ into $\mdl U_{k^{\sep}}$.
  Moreover, let $f \colon U \to \Spec k^{\sep}$ denote the structure morphism, and let $p \colon \mdl U_{k^{\sep}} \to \mdl U$ be the projection morphism.
  Let $\uni Y \in \mdl U(\mdl U)$ denote the ``universal object''; i.e.~the object corresponding to the identity map on $\mdl U$.

  Define $Y_1 = j^{-1}p^{-1} \uni Y, Y_2 = f^{-1}\gpt x^{-1} \uni Y \in \mdl U(U)$, and consider their images in $\Ob \stk T(U)$.
  Then $\Isom_{\stk T(U)}(Y_1,Y_2)$ is representable by a finite \'etale $U$-scheme by assumption.

  Moreover, it is surjective as by construction $\Hom_{\stk T(U)}(Y_1,Y_2)(\gpt x)$ is non-empty and $U$ is connected.
  Hence for any point $\gpt{x'} \in \mdl U(k^{\sep})$, the set $\Hom_{\stk T(U)}(Y_1,Y_2)(\gpt{x'})$ is non-empty as well.
  Therefore the morphism $\mdl U(k^{\sep}) \to \pi_0(\stk T)(k^{\sep})$ factors through a surjective $\Gal(k^{\sep}/k)$-equivariant map $\pi_0(\mdl U)(k^{\sep}) \to \pi_0(\stk T)(k^{\sep})$.
  In other words, the morphism $\pi_0(\mdl U) \to \pi_0(\stk T)$ of sheaves on $(\Spec k)_{\et}$ is surjective.

  Denote the morphisms $\mdl R \to \mdl U$ by $\alpha$ and $\omega$.
  As $\alpha$ and $\omega$ have geometrically connected fibres, it follows that the morphism $\pi_0(\mdl U) \to \pi_0(\stk T)$ is an isomorphism; if $\gpt x, \gpt x' \in \mdl U(k^{\sep})$ are isomorphic, then $\gpt x' \in \alpha\pth*[big]{\omega^{-1}(\gpt x)}$, hence $\gpt x, \gpt x'$ lie in the same geometric connected component of $\mdl U$.

  Finally, if $\alpha$ and $\omega$ have geometrically irreducible fibres, then the same argument implies that every geometric connected component of $\mdl U$ is irreducible.
\end{proof}

As a corollary, we see that in \autoref{d:stack-key-lemma} any set $X$ of points $\Spec l_i \to \mdl U$ (with $l_i/k$ finite algebraic) such that every connected component of $\mdl U$ contains a point $X$ induces a finite cover of $\pi_0(\stk T)$.
We describe in \autoref{h:points-torsors} how to use such a set $X$ to compute $\pi_0(\stk T)$.

\section{Preliminaries} \label{h:computation}

In this paper we will mainly consider {\em generic field algorithms}; i.e.~algorithms that take a finite number of bits and a finite number of a field $k$, which are only allowed to operate on the field elements through a number of black box operations, and, aside from the black box operations, are deterministic.
The assumptions that follow here are essentially the assumptions as mentioned in \cite[p.~1843, 1846]{chistov86}.

First, we assume the constants $0$ and $1$ (in $k$) and the characteristic exponent $p$ (in $\bb Z$) are given.
Moreover, we assume the imperfectness degree $p^e$ of $k$ to be finite, and that $k^{1/p}/k$ is given explicitly as a finite $k$-algebra (i.e.~as a $k$-vector space with given unit and multiplication table).

In this paper these black box operations are:
\begin{itemize}
  \item $=$, which takes $x,y \in k$, and returns $1$ if $x=y$, and $0$ if $x \neq y$;
  \item $-$, which takes $x \in k$, and returns $-x$;
  \item $\cdot^{-1}$, which takes $x \in k$, and returns nothing if $x=0$, and $x^{-1}$ if $x \neq 0$;
  \item $+$, which takes $x,y \in k$, and returns $x+y$;
  \item $\times$, which takes $x,y \in k$, and returns $xy$;
  \item $\cdot^{1/p}$, which takes $x \in k$, and returns $x^{1/p} \in k^{1/p}$;
  \item $F$, which takes a polynomial $f \in k[x]$, and returns its factorisation into irreducibles in $k[x]$.
\end{itemize}

\begin{rem}
  For any field finitely generated over a finite field or $\bb Q$, there are algorithms for each of the above black box operations, however, the most efficient implementations of the factorisation algorithm for finite fields are randomised.
\end{rem}

To such a generic field algorithm we attach a number of functions (from the set of inputs to $\bb N$).
\begin{itemize}
  \item The {\em bit-complexity} $N_{\cbit}$; for an input $I$, the number $N_{\cbit}(I)$ is the number of bit-operations the algorithm performs when given $I$.
  \item The {\em arithmetic complexity} $N_{\car}$; for an input $I$, the number $N_{\car}(I)$ is the number of black box operations the algorithm performs when given $I$.
\end{itemize}
We will usually not mention the bit-complexity of the algorithms in this paper;  in all cases, the bit-complexity will be small compared to the arithmetic complexity.
As is customary, as a measure of size for inputs, we take the pair $(b,f)$, where $b$ is the number of bits in the input, and $f$ is the number of field elements in the input;  so for $\Phi$ a function from the set of inputs to $\bb N$, we will denote by $\Phi(b,f)$ the maximum of the $\Phi(I)$ with $I$ ranging over all the inputs with at most $b$ bits and $f$ field elements.

We note that a lot of linear algebraic operations, like matrix addition, matrix multiplication, computation of characteristic polynomial, and by extension, reduced row echelon form, rank, kernels, images, quotients, etc.~can all be performed in arithmetic complexity polynomial in the size of the input.

By \cite[Sec.~7]{khurimakdisi04}, the primary decomposition of a finite $k$-algebra $A$ can also be computed in arithmetic complexity polynomial in $[A:k]$, and if $k$ is perfect, the same holds for the computation of nilradicals.
In fact, in our case \cite[Sec.~7]{khurimakdisi04} computes an $l$-basis (and therefore a $k$-basis) for the nilradical of $A \otimes_k l$ (where $l = k^{1/p^{\flr*{\log_p [A:k]}}}$), and therefore also a $k$-basis for the nilradical of $A$, in arithmetic complexity polynomial in $[A:k]^{e+1}$.

Moreover, using the criteria that a reduced finite $k$-algebra $A$ is separable if $[A:k] < p$, and if and only if $A$ is spanned over $k$ by $t_i^p$ for $t_i$ any $k$-basis for $A$, one can compute separable closures of $k$ in finite field extensions $l$ in arithmetic complexity polynomial in $[l:k]$, using the obvious recursive algorithm.

By \cite[Sec.~1.1]{chistov86} we have algorithms which compute for a finite field extension $l/k$ the extension $l^{1/p}/l$ and the operations listed above; aside from the computation of $l^{1/p}/l$, that of characteristic roots, which have arithmetic complexity polynomial in $[l:k]^{e+1}$, and that of factorisation, which has arithmetic complexity polynomial in $[l:k]^{e+1}$ and the degree of the polynomial to be factored, every operation has arithmetic complexity polynomial in $[l:k]$.
Moreover, $l$ has the same characteristic exponent and imperfectness degree as $k$.

Now consider the purely transcendental extension $k(x)/k$.
We present its elements by pairs of polynomials;  for $f,g \in k[x]$ we set the {\em height} of $\frac fg$ to be $h(\frac fg) = \max(\deg f,\deg g)$.
Then note that for $k(x)/k$, we have $k(x)^{1/p} = k^{1/p}(x^{1/p})$ and therefore an obvious $k(x)$-basis for $k(x)^{1/p}$, and we can compute the listed operations for elements of $k(x)$ of height at most $h$ in arithmetic complexity polynomial in $h(x)$.
(Again, with the exceptions of characteristic roots, which has arithmetic complexity polynomial in $h(x)^{e+2}$ and polynomial factorisation, which has arithmetic complexity polynomial in $h(x)^{e+2}$ and the degree of the polynomial to be factored, see e.g.~\cite{kaltofen85}.)

As is customary, we will use the standard big-oh notation when bounding complexities; moreover, we will use $O(x,y)$ as a shorthand for $O\pth*[big]{\max(x,y)}$.

\section{Parametrising morphisms of modules} \label{h:standard-modules}

We will use the following characterisation of isomorphism classes of vector bundles over $\bb P^1_k$ with $k$ a field.

\begin{prop}[{\citet{dedekindweber1882}}]
  Let $k$ be a field, and let $\sh E$ be a vector bundle on $\bb P^1_k$.
  Then there exists an up to permutation unique finite sequence $(a_i)_{i=1}^s$ of integers such that
  \[
    \sh E \iso \bigoplus_{i=1}^s \sh O_{\bb P^1_k}(a_i).
  \]
\end{prop}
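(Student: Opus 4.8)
The plan is to establish existence by induction on the rank $r$ of $\sh E$ and uniqueness by isolating a numerical invariant that recovers the multiset $(a_i)$. The base case $r=1$ is precisely the identification $\operatorname{Pic}(\bb P^1_k) \iso \bb Z$, which also supplies the one ambient fact I will reuse: every line subbundle of a bundle on $\bb P^1_k$ is isomorphic to some $\sh O_{\bb P^1_k}(b)$. For the inductive step I would first single out the largest integer $a$ with $\Hom(\sh O(a), \sh E) = \hl^0(\bb P^1_k, \sh E(-a)) \neq 0$. Such an $a$ exists because twisting down enough makes $\sh E(-a)$ positive (so $\hl^0 \neq 0$) while twisting up enough kills all global sections; I would then fix a nonzero $\phi \colon \sh O(a) \to \sh E$.

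The crux of the argument, and the step I expect to be the main obstacle since $k$ need not be algebraically closed, is to check that $\phi$ exhibits $\sh O(a)$ as a \emph{subbundle}, i.e.\ that $\sh E/\sh O(a)$ stays locally free. Instead of testing fibrewise (where closed points of degree $>1$ muddy the bookkeeping), I would replace the image of $\phi$ by its saturation $\sh L \subseteq \sh E$, which on the smooth curve $\bb P^1_k$ is a line subbundle, hence $\sh L \iso \sh O(b)$. A nonzero factorisation $\sh O(a) \to \sh O(b)$ forces $b \geq a$; were $b > a$, then $\hl^0(\bb P^1_k, \sh E(-a-1)) \supseteq \hl^0(\bb P^1_k, \sh O(b-a-1)) \neq 0$, contradicting the maximality of $a$. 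Thus $b=a$, the saturation coincides with the image, $\phi$ is a subbundle inclusion, and I obtain a short exact sequence $0 \to \sh O(a) \to \sh E \to \sh Q \to 0$ with $\sh Q$ locally free of rank $r-1$.

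By the inductive hypothesis $\sh Q \iso \bigoplus_i \sh O(b_i)$, so it remains to split this sequence. The extension is classified by a class in $\bigoplus_i \hl^1(\bb P^1_k, \sh O(a-b_i))$, so it suffices to know $a - b_i \geq -1$ for every $i$. This follows once more from maximality: twisting the sequence by $-a-1$ and taking the long exact sequence, the vanishing $\hl^0(\bb P^1_k, \sh O(-1)) = \hl^1(\bb P^1_k, \sh O(-1)) = 0$ together with $\hl^0(\bb P^1_k, \sh E(-a-1)) = 0$ yields $\hl^0(\bb P^1_k, \sh Q(-a-1)) = 0$, i.e.\ $b_i \leq a$ for all $i$. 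Hence each $\hl^1(\bb P^1_k, \sh O(a-b_i))$ vanishes, the sequence splits, and $\sh E \iso \sh O(a) \oplus \bigoplus_i \sh O(b_i)$, completing existence.

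For uniqueness I would avoid comparing two decompositions head-on and instead note that the function $m \mapsto \dim_k \Hom(\sh O(m), \sh E) = \sum_i \max(a_i - m + 1, 0)$ depends only on $\sh E$. A finite-difference computation then recovers, for each integer $c$, the value $\#\{ i : a_i = c \}$ as a second difference of this function, so the multiset $(a_i)$ is determined by $\sh E$; this gives uniqueness up to permutation.
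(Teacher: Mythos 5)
The paper does not prove this statement: it is quoted as a classical result of Dedekind--Weber (nowadays usually attributed to Birkhoff or Grothendieck) with a citation and no argument, so there is no in-paper proof to compare yours against. Your argument is the standard modern one and is correct: the saturation step correctly handles the possibility that $\phi$ fails to be a subbundle inclusion (and works over any field, since it never tests fibres at closed points), the bound $b_i \leq a$ obtained from $\hl^0\pth*[big]{\bb P^1_k,\sh E(-a-1)}=0$ and the long exact sequence gives exactly the vanishing of the extension group $\bigoplus_i \hl^1\pth*[big]{\bb P^1_k,\sh O(a-b_i)}$ needed to split, and the second-difference computation $\#\set*{i : a_i = c} = \pth*[big]{f(c)-f(c+1)} - \pth*[big]{f(c+1)-f(c+2)}$ for $f(m) = \dim_k\Hom\pth*[big]{\sh O(m),\sh E}$ does recover the multiset. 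The one assertion you leave unjustified is that $\hl^0\pth*[big]{\bb P^1_k,\sh E(-a)}$ vanishes for $a \gg 0$; this is standard but deserves a line, e.g.\ choose a surjection $\sh O(-N)^M \to \sh E^{\vee}$ from a twist of a trivial bundle, dualise to embed $\sh E$ into $\sh O(N)^M$, and conclude $\hl^0\pth*[big]{\bb P^1_k,\sh E(-a)} \subseteq \hl^0\pth*[big]{\bb P^1_k,\sh O(N-a)^M} = 0$ for $a > N$. With that line added the proof is complete.
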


This motivates the following definition.

\begin{defn}
  Let $S$ be a scheme, and let $a$ be a finite sequence of integers of length $s$.
  The {\em standard module of type $a$} on $S$ is the $\sh O_{\bb P^1_S}$-module
  \[
    \sh O_{\bb P^1_S}(a) = \bigoplus_{i=1}^s \sh O_{\bb P^1_S}(a_i).
  \]
\end{defn}

So every vector bundle $\sh E$ over $\bb P^1_k$ is isomorphic to a standard module over $k$, say of type $a$; in this case, we simply say that $\sh E$ {\em has type} $a$.

Let, for finite sequences $a,b$ of integers, of lengths $s,t$, respectively, $H_{a,b}$ define the functor $\Sch^{\op} \to \Set$ sending $S$ to $\Hom_{\sh O_{\bb P^1_S}}\pth*[big]{\sh O_{\bb P^1_S}(b),\sh O_{\bb P^1_S}(a)}$.
Moreover, let $N(a,b) = \sum_{i=1,j=1}^{s,t} \max(a_i-b_j+1,0)$.

Then the functor $H_{a,b}$ is representable by $\bb A^{N(a,b)}$:  in fact, as
\[
  \Hom_{\sh O_{\bb P^1_S}}\pth*[big]{\sh O_{\bb P^1_S}(b_j),\sh O_{\bb P^1_S}(a_i)} = \sh O(S)[x,y]_{a_i-b_j}
\]
functorial in $S$, we get an identification
\[
  \Hom_{\sh O_{\bb P^1_S}}\pth*[big]{\sh O_{\bb P^1_S}(b),\sh O_{\bb P^1_S}(a)} = \set*[Big]{M \in \Mat_{s \times t}\pth*[big]{\sh O(S)[x,y]} : M_{ij} \in \sh O(S)[x,y]_{a_i - b_j}},
\]
and under this identification, all the relevant operations on morphisms of standard modules (i.e.~identity map, composition, direct sum, tensor product, dual, exterior powers) correspond to their usual counterparts on matrices.
In particular, if these operations are viewed as operations on the representing scheme $\bb A^{N(a,b)}$, then the degrees of the polynomials defining them are as expected.

To an element of $H_{a,b}(S)$, one way to give its fibre at $0 \in \bb P^1_S$ is by substituting $(0,1)$ for $(x,y)$, and one way to give its fibre at $\infty \in \bb P^1_S$ is by substituting $(1,0)$ for $(x,y)$.
Moreover, a way to give the first infinitesimal neighbourhood at $\infty \in \bb P^1_S$ is by substituting $x = 1$ and setting $y^2 = 0$.

\section{Torsors} \label{h:torsors}

A {\em curve} over a field $k$ in this paper is a separated $k$-scheme of finite type, of pure dimension $1$ over $k$.

Consider the following situation.

Let $k$ be a field, let $f \colon X \to \Spec k$ be a smooth connected curve, let $i \colon Z \to X$ be a closed immersion, let $j \colon U \to X$ be its open complement, and let $\sh G$ be a finite locally constant sheaf of groups on $U_{\et}$.
We wish to compute $R^1f_*j_! \sh G$ (or equivalently, $\hl^1(X_{k^{\sep},\et},j_!\sh G)$ as a Galois set) under some minor conditions, and as stated before, we will do this by reduction to a computation with standard modules.

\subsection{$j_!\sh G$-torsors and recollement}

We first recall {\em recollement}.

\begin{defn}
  Let $X$ be a scheme, let $i \colon Z \to X$ be a closed immersion, and let $j \colon U \to X$ be its open complement.

  Define the category $\Sh_{Z,U}(X_{\et})$ as follows.
  The set of objects of $\Sh_{Z,U}(X_{\et})$ is the set of triples $(\sh F_Z,\sh F_U,\phi)$ of a sheaf $\sh F_Z$ on $Z_{\et}$, a sheaf $\sh F_U$ on $U_{\et}$, and a morphism $\phi \colon \sh F_Z \to i^{-1}j_* \sh F_U$.
  For objects $(\sh F_Z,\sh F_U,\phi)$ and $(\sh F'_Z,\sh F'_U,\phi')$ of $\Sh_{Z,U}(X_{\et})$, the set of morphisms from $(\sh F_Z,\sh F_U,\phi)$ to $(\sh F'_Z,\sh F'_U,\phi')$ is the set of pairs $(f_Z,f_U)$ of a morphism $f_Z \colon \sh F_Z \to \sh F'_Z$ and a morphism $f_U \colon \sh F_U \to \sh F'_U$ such that the following diagram commutes.
  \[
    \begin{tikzcd}
      \sh F_Z \ar[d,"\phi"'] \ar[r,"f_Z"] & \sh F'_Z \ar[d,"\phi'"] \\
      i^{-1}j_* \sh F_U \ar[r,"i^{-1}j_*{(f_U)}"'] & i^{-1}j_* \sh F'_U
    \end{tikzcd}
  \]
\end{defn}

\begin{thm}[Recollement, e.g.~{\cite[Sec.~5.4]{fu15}}]
  Let $X$ be a scheme, let $i \colon Z \to X$ be a closed immersion, and let $j \colon U \to X$ be its open complement.

  Then the functor $\Sh(X_{\et}) \to \Sh_{Z,U}(X_{\et})$ sending $\sh F$ to $\pth*[big]{i^{-1}\sh F,j^{-1}\sh F,i^{-1}(\upsilon)}$, where $\upsilon \colon \sh F \to j_*j^{-1}\sh F$ is the unit map of the adjoint pair $(j^{-1},j_*)$ of functors, is an equivalence of categories, and a quasi-inverse $\Sh_{Z,U}(X_{\et}) \to \Sh(X_{\et})$ is given by sending $(\sh F_Z,\sh F_U,\phi)$ to $i_*\sh F_Z \times_{i_*(\phi),i_*i^{-1}j_* \sh F_U,\upsilon} j_* \sh F_U$, where $\upsilon \colon j_* \sh F_U \to i_* i^{-1} j_* \sh F_U$ is the unit map of the adjoint pair $(i^{-1},i_*)$ of functors.
\end{thm}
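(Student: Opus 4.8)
The plan is to exhibit the stated functor, call it $\Phi \colon \Sh(X_\et) \to \Sh_{Z,U}(X_\et)$, together with the proposed quasi-inverse $\Psi$ sending $(\sh F_Z,\sh F_U,\phi)$ to the fibre product $i_*\sh F_Z \times_{i_*i^{-1}j_*\sh F_U} j_*\sh F_U$ (over the maps $i_*(\phi)$ and $\upsilon$), as mutually inverse up to natural isomorphism. The only geometric inputs I would use are three standard facts attached to the decomposition $X = Z \sqcup U$: the functors $i^{-1}$ and $j^{-1}$ are exact, so in particular they preserve the fibre products appearing in $\Psi$; the counits $i^{-1}i_* \to \mathrm{id}$ and $j^{-1}j_* \to \mathrm{id}$ are isomorphisms, since $i_*$ and $j_*$ are fully faithful; and $j^{-1}i_*$ is canonically the constant functor at the terminal sheaf $\ast$, because no geometric point of $U$ lies over $Z$. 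I would also record that the pair $(i^{-1},j^{-1})$ is jointly conservative: every geometric point of $X$ factors through exactly one of $i$ and $j$, so a morphism in $\Sh(X_\et)$ is an isomorphism as soon as its images under $i^{-1}$ and $j^{-1}$ are.

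First I would check $\Phi\Psi \iso \mathrm{id}$. Fix $(\sh F_Z,\sh F_U,\phi)$ and set $\sh F = \Psi(\sh F_Z,\sh F_U,\phi)$. Applying the exact functor $j^{-1}$ to the defining fibre product and using $j^{-1}i_* \iso \ast$ and $j^{-1}j_* \iso \mathrm{id}$ collapses it to $j^{-1}\sh F \iso \sh F_U$. Applying $i^{-1}$ and using $i^{-1}i_* \iso \mathrm{id}$ turns the leg coming from $\upsilon$ into the identity of $i^{-1}j_*\sh F_U$ by the triangle identity for the adjunction $(i^{-1},i_*)$, so the fibre product collapses along that leg to $i^{-1}\sh F \iso \sh F_Z$; a short diagram chase then identifies the transported structure morphism with $\phi$. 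All of these isomorphisms are natural in the input, giving the natural isomorphism $\Phi\Psi \iso \mathrm{id}$ in $\Sh_{Z,U}(X_\et)$.

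Next I would check $\Psi\Phi \iso \mathrm{id}$. For $\sh F \in \Sh(X_\et)$, the unit $\sh F \to i_*i^{-1}\sh F$ of $(i^{-1},i_*)$ and the unit $\upsilon \colon \sh F \to j_*j^{-1}\sh F$ of $(j^{-1},j_*)$ become equal after composing into $i_*i^{-1}j_*j^{-1}\sh F$ — this is exactly the naturality of the unit of $(i^{-1},i_*)$ applied to the morphism $\upsilon$ — and hence induce a canonical comparison morphism $\sh F \to \Psi\Phi\sh F$. To see it is an isomorphism I would invoke joint conservativity and verify it separately after $i^{-1}$ and after $j^{-1}$; both verifications reduce, exactly as in the previous paragraph, to the identities $i^{-1}i_* \iso \mathrm{id}$, $j^{-1}j_* \iso \mathrm{id}$, and $j^{-1}i_* \iso \ast$.

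I expect the main obstacle to be not any single computation but the bookkeeping: verifying that the square defining the comparison morphism $\sh F \to \Psi\Phi\sh F$ commutes, that the triangle identities collapse the relevant legs, and that the structure morphism $\phi$ is preserved through the identifications so that the natural transformations actually land in $\Sh_{Z,U}(X_\et)$. Once the three exactness and vanishing facts above are granted, every remaining step is a formal application of them, so I would cite the geometric inputs (exactness of $i^{-1}$ and $j^{-1}$ and the vanishing $j^{-1}i_* \iso \ast$) rather than reprove them.
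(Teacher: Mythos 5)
The paper does not prove this statement at all: it is quoted as a standard result with a citation to Fu's book (Sec.~5.4), so there is no in-paper argument to compare against. Your proof is the standard one and is correct: the four inputs you isolate (exactness of $i^{-1}$ and $j^{-1}$, full faithfulness of $i_*$ and $j_*$ giving invertible counits, the identification $j^{-1}i_* \cong \ast$, and joint conservativity of the two pullbacks via the decomposition of the points of $X$) are exactly what is needed, and the triangle-identity collapses of the two legs of the fibre product go through as you describe.
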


Note that the functor $i^{-1}j_*$ is left exact, hence commutes with finite limits.
Let $\cat T$ denote the category of $j_!\sh G$-torsors on $X_{\et}$, and let $\cat T_{Z,U}$ denote the category of which the objects are pairs $(\sh F,s)$ of a $\sh G$-torsor $\sh F$ on $U_{\et}$, and a section $s \in i^{-1}j_*\sh F(Z)$, and in which the morphisms $(\sh F,s) \to (\sh F',s')$ are the morphisms $f \colon \sh F \to \sh F'$ such that $i^{-1}j_*(f)$ sends $s$ to $s'$.

\begin{lem}
  Let $X$ be a scheme, let $i \colon Z \to X$ be a closed immersion, and let $j \colon U \to X$ be its open complement.
  Let $\sh G$ be a sheaf of groups on $U_{\et}$.
  The rule attaching to a $j_!\sh G$-torsor $\sh F$ on $X_{\et}$ the pair $\pth*[big]{j^{-1}\sh F,i^{-1}(\upsilon)}$, where $\upsilon \colon \sh F \to j_*j^{-1}\sh F$ denotes the unit map of the adjoint pair $(j^{-1},j_*)$ of functors, defines an equivalence $\cat T \to \cat T_{Z,U}$ of categories.
\end{lem}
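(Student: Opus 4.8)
The plan is to deduce the lemma from the recollement equivalence $\Sh(X_{\et}) \to \Sh_{Z,U}(X_{\et})$ by identifying the triples in $\Sh_{Z,U}(X_{\et})$ that correspond to $j_!\sh G$-torsors. First I would record the two relevant computations: since $j$ is an open immersion, there is a canonical identification $j^{-1}j_!\sh G = \sh G$, and since $j_!$ is extension by the trivial group, the restriction $i^{-1}j_!\sh G$ is the trivial sheaf of groups, whose underlying sheaf is the terminal object $\ast$. Consequently, under recollement the group object $j_!\sh G$ is carried to the triple $(\ast, \sh G, \tau)$, where $\tau \colon \ast \to i^{-1}j_*\sh G$ is the homomorphism selecting the identity section.

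Next I would transport the torsor structure across the equivalence. Being an equivalence, the recollement functor preserves finite limits, epimorphisms, group objects and their actions; moreover finite limits in $\Sh_{Z,U}(X_{\et})$ are computed componentwise, which uses precisely that $i^{-1}j_*$ is left exact. Phrasing the torsor axioms categorically --- a sheaf $\sh F$ with $G$-action is a torsor exactly when the map $G \times \sh F \to \sh F \times \sh F$, $(g,x) \mapsto (gx,x)$, is an isomorphism and $\sh F \to \ast$ is an epimorphism --- I conclude that $\sh F$ is a $j_!\sh G$-torsor on $X_{\et}$ if and only if its image $(\sh F_Z, \sh F_U, \phi)$ is a torsor under $(\ast, \sh G, \tau)$ in $\Sh_{Z,U}(X_{\et})$.

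I then read off what such triples are, working componentwise. The $U$-component $\sh F_U$ is forced to be a $\sh G$-torsor. The $Z$-component $\sh F_Z$ is a torsor under the trivial group, so the two axioms force $\sh F_Z \to \ast$ to be simultaneously a monomorphism and an epimorphism, whence $\sh F_Z = \ast$. Finally the gluing morphism $\phi \colon \ast \to i^{-1}j_*\sh F_U$ is equivariant for the (trivial) action of $\tau$ automatically, so it is subject to no further condition and amounts to the same datum as a section $s \in (i^{-1}j_*\sh F_U)(Z)$. Hence the triples occurring are exactly those of the form $(\ast, \sh F_U, s)$ with $\sh F_U$ a $\sh G$-torsor, i.e.~exactly the objects of $\cat T_{Z,U}$; and since on morphisms recollement forces the $Z$-component to be the identity of $\ast$, a morphism of such triples is exactly a morphism $f \colon \sh F_U \to \sh F'_U$ with $i^{-1}j_*(f)$ sending $s$ to $s'$, matching $\cat T_{Z,U}$ exactly. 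Because for a $j_!\sh G$-torsor $\sh F$ we have $\sh F_U = j^{-1}\sh F$ and $\phi = i^{-1}(\upsilon)$ (now viewed as a section of $i^{-1}j_*j^{-1}\sh F$ via $\sh F_Z = \ast$), this restricted functor is precisely the rule in the statement, and it is an equivalence since recollement is.

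The step I expect to need the most care is the passage between ``torsor'' on $X_{\et}$ and the componentwise data: one must check that the local-triviality (epimorphism) condition, and not merely the group-action data, is faithfully transported by recollement, and in particular that a torsor under the trivial group really collapses to the terminal sheaf $\ast$ rather than to some other locally trivial object. This rests on the two soft inputs --- that recollement is an equivalence preserving finite limits and epimorphisms, and that $i^{-1}j_*$ is left exact so that limits in $\Sh_{Z,U}(X_{\et})$ are componentwise --- together with the one genuinely sheaf-theoretic computation $i^{-1}j_!\sh G = \ast$, which is where the specific nature of $j_!$ as extension by the trivial group is used.
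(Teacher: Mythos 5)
Your proposal is correct and takes essentially the same route as the paper: both identify $j_!\sh G$ under recollement with the triple $(1,\sh G,1)$, transport the torsor axioms (isomorphism condition plus local triviality) componentwise, and conclude that the $Z$-component collapses to the terminal sheaf so that the gluing morphism becomes a section of $i^{-1}j_*\sh F_U$. The paper's proof is merely terser about why the equivariance of the gluing map is automatic.
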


\begin{proof}
  First, note that the sheaf $j_! \sh G$ is under recollement equivalent to the triple $(1,\sh G,1)$.

  Now giving a $j_!\sh G$-action $\rho$ on a sheaf $\sh F$ on $X_{\et}$ is equivalent to giving $(i^{-1} \sh F,j^{-1} \sh F,i^{-1}(\upsilon))$ together with an action of $\sh G$ on $j^{-1} \sh F$; the commutativity of
  \[
    \begin{tikzcd}
      1 \times i^{-1} \sh F \ar[d,"1 \times {i^{-1}(\upsilon)}"'] \ar[r,"\rho_Z"] & i^{-1}\sh F \ar[d,"{i^{-1}(\upsilon)}"] \\
      i^{-1}j_* \sh G \times i^{-1}j_*j^{-1} \sh F \ar[r,"i^{-1}j_*{(\rho_U)}"'] & i^{-1}j_*j^{-1} \sh F
    \end{tikzcd}
  \]
  is automatic since both $\rho_Z$ and $i^{-1}j_*(\rho_U)$ are group actions.
  (Of course, one can also deduce this equivalence by noting that a morphism $j_!\sh G \to \sAut(\sh F)$ is equivalent to a morphism $\sh G \to j^{-1} \sAut(\sh F) = \sAut(\sh F_U)$.)

  Now $\sh F$ is a $j_!\sh G$-torsor if and only if the map $j_!\sh G \times \sh F \to \sh F \times \sh F$ given on local sections by $(g,s) \mapsto (s,gs)$ is an isomorphism, and $\sh F$ locally has a section.
  This is equivalent to the following.
  \begin{itemize}
    \item $i^{-1} \sh F$ is the terminal sheaf on $Z_{\et}$; therefore $i^{-1}(\upsilon)$ is an element of $i^{-1}j_* \sh F(Z)$, and it follows that the given rule indeed defines a functor;
    \item $j^{-1} \sh F$ is a $\sh G$-torsor on $U_{\et}$,
  \end{itemize}
  so the given rule defines an equivalence, as desired.
\end{proof}

\subsection{Pushforward and normalisation}

Next, we consider a description of the pushforward of a finite locally constant sheaf along certain open immersions.
This is mostly well-known, but the author doesn't know of a reference, so proofs are included here for completeness.

Recall that, for a scheme $X$, the category of sheaves on $X_{\et}$ is equivalent to that of algebraic spaces \'etale over $X$.
Quasi-inverses are given by the functor sending an algebraic space \'etale over $X$ to its functor of points, and the functor sending a sheaf on $X_{\et}$ to its {\em espace \'etal\'e}.
By descent, finite locally constant sheaves on $X_{\et}$ are precisely those of which the espace \'etal\'e is a finite \'etale $X$-scheme.

\begin{prop} \label{d:pushforward-normalisation}
  Let $X$ be a scheme, and let $j \colon U \to X$ be a quasi-compact open immersion such that the normalisation of $X$ in $U$ is $X$.
  Let $\sh F$ be a finite locally constant sheaf on $U_{\et}$, or equivalently, a finite \'etale $U$-scheme.
  Let $\nc{\sh F}$ be the normalisation of $X$ in $\sh F$.
  Then for all \'etale $X$-schemes $T$, we have $j_*\sh F(T) = \nc{\sh F}(T)$ functorial in $T$.
\end{prop}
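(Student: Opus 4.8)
The plan is to construct a functorial restriction map $r \colon \nc{\sh F} \to j_*\sh F$ of sheaves on $X_{\et}$ (where $\nc{\sh F}$ denotes the presheaf $T \mapsto \Hom_X(T,\nc{\sh F})$) and to show it is an isomorphism. The first ingredient is the identity $\nc{\sh F}|_U = \sh F$. Indeed, the formation of the normalisation is local on the base and commutes with restriction to an open, so $\nc{\sh F}|_U$ is the normalisation of $U$ in $\sh F \times_X U = \sh F$; and since $\sh F \to U$ is finite, hence integral, its own normalisation is $\sh F$ itself. This exhibits $\sh F \hookrightarrow \nc{\sh F}$ as the preimage of $U$, so that any $X$-morphism $T \to \nc{\sh F}$ restricts over $U$ to a $U$-morphism $T_U \to \sh F$ with $T_U := T \times_X U$, i.e.\ to an element of $\sh F(T_U) = (j_*\sh F)(T)$; this is $r$, visibly functorial in $T$.

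Next I would reduce the bijectivity of $r_T$, for all \'etale $T$, to the single case $T = X$. Normalisation commutes with \'etale (indeed smooth) base change, so $\nc{\sh F} \times_X T$ is the normalisation of $T$ in $\sh F \times_U T_U$; the hypothesis that $X$ is its own normalisation in $U$ is likewise inherited by $T$ and $T_U$, which again form a quasi-compact open immersion. Thus $(X,U,\sh F)$ may be replaced by $(T, T_U, \sh F \times_U T_U)$, and it suffices to prove that the restriction map on global sections $\Hom_X(X,\nc{\sh F}) \to \sh F(U)$ is a bijection for every instance of the hypotheses.

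Surjectivity is the slick part. Given $s \in \sh F(U)$, finiteness and \'etaleness make $s$ both an open and a closed immersion, so $s(U)$ is clopen and $\sh F = s(U) \sqcup \sh F_1$. Normalisation sends a disjoint union to a disjoint union (the integral closure in a product of algebras is the product of the integral closures), whence $\nc{\sh F} = \nc{s(U)} \sqcup \nc{\sh F_1}$; and as $s(U) \to X$ is identified with $j$, the hypothesis gives $\nc{s(U)} = X$. The inclusion of this component is a section $\bar s \colon X \to \nc{\sh F}$ with $r(\bar s) = s$. For injectivity I would first observe that the hypothesis forces $\sh O_X \to j_*\sh O_U$ to be injective: writing $\nc{\sh F}$ and the normalisation of $X$ in $U$ as relative spectra of integral closures inside the relevant pushforwards, the structure map $\sh O_X \to j_*\sh O_U$ factors through $\sh O_X/\ker$, so a nonzero kernel would prevent the normalisation of $X$ in $U$ from being $X$. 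Given this, two sections correspond to two $\sh O_X$-algebra splittings that become equal on values after composing with the injection $\sh O_X \hookrightarrow j_*\sh O_U$, hence are equal. Together with surjectivity and the base-change reduction, this yields $r \colon \nc{\sh F} \iso j_*\sh F$.

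The main obstacle I anticipate is not the formal bijection but pinning down the two consequences of the hypothesis that control the behaviour away from the generic locus: that the normalisation of $X$ in $U$ being $X$ genuinely delivers both $\nc{\sh F}|_U = \sh F$ and the schematic density $\sh O_X \hookrightarrow j_*\sh O_U$ (so that no non-reduced or lower-dimensional component off $U$ sabotages injectivity), together with the compatibilities of normalisation with \'etale base change and with clopen decompositions invoked above.
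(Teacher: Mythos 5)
Your proof is correct, and it takes a genuinely different route from the paper's. Both arguments rest on the same two compatibilities you single out at the end: normalisation commutes with smooth (in particular \'etale and open) base change, and $\sh F$, being finite over $U$, is its own normalisation, so that $\nc{\sh F} \times_X U = \sh F$. But the paper keeps $T$ general (after reducing to $T$ affine) and runs everything through the universal property of normalisation: it identifies $j_*\sh F(T) = \sh F(U \times_X T)$ with $\nc{\sh F}\pth*[big]{\nc{U \times_X T}}$, where $\nc{U \times_X T}$ is the normalisation of $X$ in $U \times_X T$ --- every map $U \times_X T \to \sh F \subseteq \nc{\sh F}$ extends uniquely to $\nc{U \times_X T}$ because $\nc{\sh F} \to X$ is integral, and conversely every map from $\nc{U \times_X T}$ restricts into $\sh F$ --- and then uses the hypothesis together with smooth base change to see $\nc{U \times_X T} = \nc{T}$ and $\nc{\sh F}(\nc{T}) = \nc{\sh F}(T)$. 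You instead transport the whole situation along the \'etale morphism $T \to X$ to reduce to global sections, and then prove bijectivity by hand: surjectivity by splitting off the clopen component $s(U)$ and normalising it to a copy of $X$, and injectivity from the schematic density $\sh O_X \hookrightarrow j_*\sh O_U$ forced by the hypothesis. Your version is more elementary and makes explicit exactly which two consequences of the hypothesis are doing the work; the paper's version is shorter because the universal property packages surjectivity and injectivity into a single identification. Both are complete proofs.
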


\begin{proof}
  First of all, note that we may restrict ourself to \'etale $X$-schemes $T$ that are affine, and therefore to quasi-compact separated \'etale $X$-schemes $T$.
  So let $T$ be an \'etale quasi-compact separated $X$-scheme.
  Let $\nc T$ be the normalisation of $X$ in $T$, and let $\nc{U \times_X T}$ be the normalisation of $X$ in $U \times_X T$.
  Then $j_*\sh F(T) = \sh F(U \times_X T)$, and we have a map $\sh F(U \times_X T) \to \nc{\sh F}(\nc{U \times_X T})$.
  Since for every $Y$-morphism $\nc{U \times_X T} \to \nc{\sh F}$, the composition with $U \times_X T \to \nc{U \times_X T}$ factors through $\sh F$ (as $\sh F$ is a finite \'etale $X$-scheme), it follows that $\sh F(U \times_X T) = \nc{\sh F}(\nc{U \times_X T})$.

  Now note that since normalisation commutes with smooth base change (see e.g.~\cite[Tag 082F]{stacksproject}), it follows that the normalisation of $T$ in $U \times_X T$ is simply $T$.
  Therefore $\nc{U \times_X T} \to \nc T$ is an isomorphism, and we have $\nc{\sh F}(\nc{U \times_X T}) = \nc{\sh F}(\nc{T}) = \nc{\sh F}(T)$, as desired.
\end{proof}

\begin{cor}
  Let $X$ be a scheme, and let $j \colon U \to X$ be a quasi-compact open immersion such that the normalisation of $X$ in $U$ is $X$.
  Then for all finite sets $F$, we have $j_* F = F$.
\end{cor}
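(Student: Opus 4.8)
The plan is to apply \autoref{d:pushforward-normalisation} to the finite locally constant sheaf $F$ on $U_\et$, namely the constant sheaf with value the finite set $F$. Under the equivalence between finite locally constant sheaves and finite \'etale schemes recalled above, this sheaf corresponds to the finite \'etale $U$-scheme $\coprod_F U$, the disjoint union of copies of $U$ indexed by the elements of $F$. By \autoref{d:pushforward-normalisation} we then have $j_* F(T) = \nc F(T)$, functorial in \'etale $X$-schemes $T$, where $\nc F$ denotes the normalisation of $X$ in $\coprod_F U$.

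It remains to compute $\nc F$ explicitly. First I would observe that normalisation commutes with finite disjoint unions, so that $\nc F = \coprod_F \nc U$, where $\nc U$ denotes the normalisation of $X$ in $U$ along $j$. The standing hypothesis is precisely that $\nc U = X$, whence $\nc F = \coprod_F X$.

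Finally I would identify $\coprod_F X$ with the espace \'etal\'e of the constant sheaf $F$ on $X_\et$: indeed, the espace \'etal\'e of the constant sheaf with value $F$ on any scheme is the disjoint union of copies of that scheme indexed by $F$. Combining this with the two previous steps gives $j_* F(T) = \pth*[big]{\coprod_F X}(T) = F(T)$ functorially in $T$, that is, $j_* F = F$ as sheaves on $X_\et$, as desired.

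The argument is essentially formal once \autoref{d:pushforward-normalisation} is in hand; the only ingredient that is not purely categorical is the claim that normalisation commutes with finite disjoint unions. This is the step I would be most careful about, although it is standard (the integral closure of $\sh O_X$ in a finite product of $\sh O_X$-algebras is the product of the integral closures) and can be cited. Note in particular that no new hypotheses need to be verified, since \autoref{d:pushforward-normalisation} already allows an arbitrary finite locally constant $\sh F$ for the fixed $X$ and $j$, so the entire content of the corollary is the evaluation of $\nc F$ above.
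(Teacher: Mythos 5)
Your proof is correct and follows exactly the argument the paper intends (the paper states this corollary without proof, as an immediate consequence of \autoref{d:pushforward-normalisation}): apply the proposition to the constant sheaf with espace \'etal\'e $\coprod_F U$, note that the normalisation of $X$ in this disjoint union is $\coprod_F X$ by the hypothesis and the compatibility of integral closure with finite products of algebras, and identify the result with the constant sheaf $F$ on $X_{\et}$. Nothing is missing.
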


\begin{lem}
  Let $k$ be a field, let $X$ be a $k$-scheme of finite type, and let $j \colon U \to X$ be an open immersion such that the normalisation of $X$ in $U$ is $X$.
  Let $\sh F$ be a finite locally constant sheaf on $U_{\et}$, or equivalently, a finite \'etale $U$-scheme.
  Then $j_* \sh F$ is representable by an \'etale, quasi-compact, separated $X$-scheme.
\end{lem}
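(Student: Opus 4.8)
The plan is to realise the espace \'etal\'e of $j_*\sh F$ as the largest open subscheme of the normalisation on which the projection is \'etale. Write $\nu\colon\nc{\sh F}\to X$ for the normalisation of $X$ in $\sh F$. Since $X$ is of finite type over a field it is Nagata, so $\nu$ is finite; since the normalisation of $X$ in $U$ is $X$, the scheme $X$ is normal, and as $\sh F\to U$ is finite \'etale with $U$ open in the normal $X$, the scheme $\nc{\sh F}$ is normal as well. Let $E\subseteq\nc{\sh F}$ be the open locus where $\nu$ is \'etale. By \autoref{d:pushforward-normalisation} we have $j_*\sh F(T)=\Hom_X(T,\nc{\sh F})$ for every \'etale $X$-scheme $T$, so I would reduce the statement to two claims: that $E\to X$ is \'etale, quasi-compact and separated, and that the inclusion $\Hom_X(T,E)\subseteq\Hom_X(T,\nc{\sh F})$ is an equality for every such $T$. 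Granting these, Yoneda on $X_{\et}$ identifies $E$ with the espace \'etal\'e of $j_*\sh F$, which is then of the required shape. The geometric properties are immediate: $E\to X$ is \'etale by construction, and it is separated and quasi-compact because $\nu$ is finite (hence separated with $\nc{\sh F}$ Noetherian) and open subschemes of Noetherian separated $X$-schemes inherit both properties.

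The content is therefore the factorisation claim: every $X$-morphism $g\colon T\to\nc{\sh F}$ with $T\to X$ \'etale lands in $E$. I would check this on strict henselisations. Fix $t\in T$ with images $x\in X$ and $e=g(t)$, choose compatible geometric points, and put $A=\sh O_{X,\bar x}^{\mathrm{sh}}$. As $T\to X$ is \'etale, $g$ induces an $A$-point of $\nc{\sh F}$ over $X$, i.e.\ a section $\sigma$ of the base change $\nu_A\colon\nc{\sh F}\times_X\Spec A\to\Spec A$; and since $\sh O_{X,x}\to A$ is faithfully flat, $\nu$ is \'etale at $e$ as soon as $\nu_A$ is \'etale at the point $\sigma$ selects. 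Now normalisation commutes with the \'etale base change $\Spec A\to X$ \cite[Tag 082F]{stacksproject}, so $\nc{\sh F}\times_X\Spec A$ is the normalisation of $\Spec A$ in $\sh F\times_X\Spec A$. Here $A$ is a strictly henselian normal local domain, so this normalisation is a finite disjoint union $\coprod_i\Spec A_i$ with each $A_i$ a normal local domain finite over $A$ and containing $A$. The section $\sigma$ factors through one $\Spec A_i$, giving a retraction $r\colon A_i\to A$ of the inclusion $A\subseteq A_i$. Being a retraction, $r$ is surjective; and its kernel is a prime of the domain $A_i$ contracting to $(0)$ in $A$, hence a minimal prime, hence $(0)$, so $r$ is also injective. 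Thus $r$ is an isomorphism, $\nu_A$ is an isomorphism near $\sigma$, and in particular \'etale there, which is what was needed.

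The main obstacle is precisely this last factorisation, and it is the only place where normality of $X$ enters essentially: the normalisation $\nc{\sh F}$ is genuinely ramified over the boundary $X\setminus U$ in general, and what must be ruled out is that an \'etale $X$-scheme maps into such a ramification point. The argument above shows this cannot happen, because a section over a strictly henselian base would produce a finite extension of a normal local domain retracting onto it, forcing the extension to be trivial. Everything else---openness of the \'etale locus, its finiteness properties, and the compatibility of the \'etale locus with the faithfully flat base change to the strict henselisation---is routine.
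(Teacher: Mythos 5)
Your route is genuinely different from the paper's: the paper deduces from \cite[Th.~finitude]{sga45} that $j_*\sh F$ is a constructible, hence finitely presented, \'etale algebraic space over $X$, proves it is separated by applying the left exact $j_*$ to the decomposition of $\sh F \times \sh F$ into the diagonal and its complement, and then invokes \cite[Tag 03XX]{stacksproject}; you instead try to exhibit the representing scheme directly as the \'etale locus $E$ of $\nc{\sh F} \to X$ (which is in fact the content of the lemma and corollary that follow this one in the paper). The problem is the very first step: the hypothesis that the normalisation of $X$ in $U$ is $X$ does \emph{not} imply that $X$ is normal. That hypothesis only constrains the integral closure of $\sh O_X$ inside $j_*\sh O_U$, and at a point $x \in U$ the stalk of $j_*\sh O_U$ is just $\sh O_{X,x}$, so no condition is imposed there: a nodal curve $X$ with $U$ the complement of a single smooth point satisfies the hypothesis and is not normal. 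Worse, since the statement allows $X$ to be an arbitrary finite type $k$-scheme, the hypothesis does not even force normality along $Z = X - U$: take $X = C \times \bb A^1_k$ with $C$ a nodal curve with node $p$ and $U = X - \{(p,0)\}$; the local ring at $(p,0)$ is Cohen--Macaulay of dimension $2$, so $j_*\sh O_U = \sh O_X$ by a depth argument, the hypothesis holds, and $X$ is non-normal at the boundary point $(p,0)$.

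This is not a cosmetic issue, because normality is the engine of your key factorisation step. At a non-normal $x \in Z$ the ring $A = \sh O_{X,\bar x}^{\mathrm{sh}}$ is not a domain (the strict henselisation at a node has two minimal primes), the local factors of the normalisation of $\Spec A$ in $\sh F \times_X \Spec A$ need not be domains containing $A$, and the argument ``the kernel of the retraction is a prime of a domain contracting to $(0)$, hence minimal, hence zero'' has nothing to bite on. Two remarks towards a repair. First, points of $U$ are always harmless: $\nc{\sh F} \times_X U$ is the normalisation of $U$ in the finite $U$-scheme $\sh F$, i.e.\ $\sh F$ itself, so $\nu$ is automatically \'etale over $U$ and the factorisation claim only needs to be checked over $Z$. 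Second, for $X$ a reduced \emph{curve} one can show that the hypothesis does force $\sh O_{X,x}$ to be normal for $x \in Z$ (the local integral closure embeds into the stalk of $j_*\sh O_U$ at such a point), so in the one-dimensional setting in which the paper actually uses the lemma your argument goes through after this reduction. But for the lemma as stated you must either add a normality hypothesis, carry out such a reduction, or fall back on an argument, like the paper's, that never uses normality.
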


\begin{proof}
  First note that by \cite[Th.~finitude]{sga45} $j_* \sh F$ is constructible, i.e.~of finite presentation as an $X$-space.

  Note that $\sh F$ is finite locally constant, so $\sh F \times \sh F$ is the disjoint union of the diagonal and its complement, inducing a morphism $\sh F \times \sh F \to \bb Z/2 \bb Z$ such that the equaliser with the constant map with value $0$ is the diagonal.
  Applying the left exact functor $j_*$ to this gives a morphism $j_* \sh F \times j_* \sh F \to j_* (\bb Z/2 \bb Z) = \bb Z/2 \bb Z$ such that the equaliser with the constant map with value $0$ is the diagonal.
  Therefore $j_* \sh F$ is separated as an $X$-space.

  It follows by \cite[Tag 03XX]{stacksproject} that $j_* \sh F$ is representable by an \'etale, quasi-compact, separated $X$-scheme.
\end{proof}

\begin{lem}
  Let $k$ be a field, let $X$ be a $k$-scheme of finite type, and let $j \colon U \to X$ be an open immersion such that the normalisation of $X$ in $U$ is $X$.
  Let $\sh F$ be a finite locally constant sheaf on $U_{\et}$, or equivalently, a finite \'etale $U$-scheme.
  Let $\nc{\sh F}$ be the normalisation of $X$ in $\sh F$.
  Then $\nc{\sh F}$ is the normalisation of $X$ in $j_* \sh F$.
\end{lem}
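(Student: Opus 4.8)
The plan is to reduce the statement, via the previous two lemmas, to a purely local assertion about normalisation in a schematically dense open of a normal scheme. By the previous lemma $j_* \sh F$ is representable by an \'etale, quasi-compact, separated $X$-scheme; write $g \colon V \to X$ for this scheme, so that ``the normalisation of $X$ in $j_* \sh F$'' means the normalisation of $X$ in $V$. Since $j$ is an open immersion we have $j^{-1}j_* \sh F \iso \sh F$, and $j^{-1}j_* \sh F$ is represented by $g^{-1}(U) = V \times_X U$; hence we may identify $\sh F$ with the open subscheme $g^{-1}(U) \subseteq V$, and then $\nc{\sh F}$ is the normalisation of $X$ in this open subscheme of $V$. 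Thus what must be proved is that the normalisation of $X$ in $V$ coincides with the normalisation of $X$ in its open subscheme $\sh F = g^{-1}(U)$.

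First I would record the two geometric inputs. The hypothesis that the normalisation of $X$ in $U$ is $X$ is the standard way of encoding that $X$ is normal and that $U$ is dense in $X$ (were $U$ to miss an irreducible component, the normalisation of $X$ in $U$ could not surject onto that component). Consequently $V$, being \'etale over the normal scheme $X$, is itself normal (smooth, in particular \'etale, morphisms preserve normality), hence reduced. Moreover $g$ is open, so density of $U$ in $X$ gives density of $g^{-1}(U) = \sh F$ in $V$: any nonempty open $W \subseteq V$ has open image $g(W)$ meeting $U$, whence $W$ meets $g^{-1}(U)$. As $V$ is reduced, this dense open $\sh F$ is even schematically dense.

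It then remains to prove the comparison statement: \emph{if $V$ is normal and $\sh F \subseteq V$ is a schematically dense open, then the normalisation of $X$ in $V$ coincides with the normalisation of $X$ in $\sh F$.} Both are $\underline{\Spec}_X$ of the integral closure of $\sh O_X$ inside $g_* \sh O_V$, resp.~$(g|_{\sh F})_* \sh O_{\sh F}$, and by schematic density together with left exactness of pushforward the restriction map $g_* \sh O_V \to (g|_{\sh F})_* \sh O_{\sh F}$ is injective; this already yields one inclusion of integral closures. The reverse inclusion is the main obstacle, and is precisely where normality of $V$ enters: a local section $s$ of $\sh O_{\sh F}$ that is integral over $\sh O_X$ is, via $\sh O_X \to \sh O_V$, integral over $\sh O_V$, and working locally on a normal integral piece of $V$ with function field $K$, the schematically dense open $\sh F$ places $s$ in $K$, integral over the integrally closed ring $\sh O_V$, so $s$ already lies in $\sh O_V$. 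Hence every element of $(g|_{\sh F})_* \sh O_{\sh F}$ integral over $\sh O_X$ already lives in $g_* \sh O_V$, the two integral closures agree, and the normalisations coincide. I would expect this comparison to be the only substantive point; everything else is bookkeeping with the two inputs established above, and one could alternatively quote a Stacks Project statement to the same effect.
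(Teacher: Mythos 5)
Your reduction of the statement to ``the normalisation of $X$ in $V = j_*\sh F$ equals the normalisation of $X$ in its open subscheme $g^{-1}(U)$'' is a genuinely different route from the paper's (the paper instead verifies the universal property of the normalisation of $X$ in $j_*\sh F$ directly, using the functorial identification $j_*\sh F(T) = \nc{\sh F}(T)$ from \autoref{d:pushforward-normalisation}), and the skeleton of your integral-closure comparison is sound. However, there is a genuine gap in how you feed the hypothesis into it: the condition ``the normalisation of $X$ in $U$ is $X$'' does \emph{not} encode that $X$ is normal. It says only that $\sh O_X$ injects into $j_*\sh O_U$ with integrally closed image. This holds, for example, for $U = X$ with $X$ an arbitrary non-normal scheme, and also with $U \subsetneq X$: take $X$ a non-normal $S_2$ variety (say a nodal curve times $\bb A^1$) and $U$ the complement of a closed subset of codimension at least $2$, so that $j_*\sh O_U = \sh O_X$. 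Consequently you may not conclude that $V$, being \'etale over $X$, is normal, and the decisive step of your argument --- passing to a normal integral piece of $V$ with function field $K$ and invoking integral closedness of $\sh O_V$ in $K$ --- is unjustified as written.

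The gap is repairable, and the repair is exactly the mechanism already used in the paper's proof of \autoref{d:pushforward-normalisation}: since normalisation commutes with smooth base change \cite[Tag 082F]{stacksproject} and $g \colon V \to X$ is \'etale, the hypothesis that the normalisation of $X$ in $U$ is $X$ base-changes to the statement that the normalisation of $V$ in $g^{-1}(U)$ is $V$, i.e.\ that $\sh O_V$ injects into the pushforward of $\sh O_{g^{-1}(U)}$ to $V$ with integrally closed image. That is precisely (and all) that your final step needs: it supplies the injectivity you use for the easy inclusion, and it lets you conclude that a local section of $\sh O_{g^{-1}(U)}$ which is integral over $\sh O_X$ --- hence integral over $\sh O_V$ --- already lies in $\sh O_V$, without any appeal to normality of $X$ or of $V$. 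With that substitution your proof closes.
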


\begin{proof}
  First note that we have a canonical morphism $j_*\sh F \to \nc{\sh F}$ corresponding to the identity section of $j_*\sh F$.
  Let $j_*\sh F \to Y \to X$ be a factorisation with $Y \to X$ integral.
  As $\nc{\sh F}$ is the normalisation of $X$ in $\sh F$, it follows that there exists a unique morphism $\nc{\sh F} \to Y$ such that the diagram
  \[
    \begin{tikzcd}
      \sh F \ar[d] \ar[r] & \nc{\sh F} \ar[d] \ar[dl] \\
      Y \ar[r] & X
    \end{tikzcd}
  \]
  commutes.
  We show that this morphism also makes the diagram
  \begin{equation} \label{e:pushforward-normalisation}
    \begin{tikzcd}
      j_* \sh F \ar[r] \ar[d] & \nc{\sh F} \ar[dl] \\
      Y
    \end{tikzcd}
  \end{equation}
  commute.
  Let $T$ be an \'etale quasi-compact separated $X$-scheme, and consider the following diagram.
  \[
    \begin{tikzcd}
      j_* \sh F(T) \ar[r] \ar[d] & \nc{\sh F}(T) \ar[dl] \\
      Y(T)
    \end{tikzcd}
  \]
  As the normalisations of $X$ in $T$ and $U \times_X T$ are equal, as in the proof of \autoref{d:pushforward-normalisation}, the commutativity of this diagram is equivalent to the commutativity of the following one.
  \[
    \begin{tikzcd}
      \sh F(U \times_X T) \ar[r] \ar[d] & \nc{\sh F}(U \times_X T) \ar[dl] \\
      Y(U \times_X T)
    \end{tikzcd}
  \]
  It follows that the commutativity of \eqref{e:pushforward-normalisation} holds when restricted to $X_{\et}$.

  Therefore, applying this to the identity section on $j_* \sh F$, it follows that \eqref{e:pushforward-normalisation} itself commutes.
\end{proof}

By Zariski's Main Theorem, we have the following.

\begin{cor}
  The canonical morphism $j_*\sh F \to \nc{\sh F}$ is an open immersion identifying $j_* \sh F$ with the \'etale locus of $\nc{\sh F}$ over $X$.
\end{cor}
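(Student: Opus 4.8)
The plan is to combine the two preceding lemmas with Zariski's Main Theorem to see that the canonical map $c \colon W \to \nc{\sh F}$, where $W := j_*\sh F$, is an open immersion, and then to invoke \autoref{d:pushforward-normalisation} to identify its image with the \'etale locus. Recall from the preceding two lemmas that $W \to X$ is \'etale, quasi-compact and separated, so in particular quasi-finite and separated, and that $\nc{\sh F}$ is the normalisation of $X$ in $W$. Since $X$ is normal (this is part of the standing hypothesis that the normalisation of $X$ in $U$ is $X$) and $W \to X$ is \'etale, the scheme $W$ is normal as well.

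First I would produce the open immersion. By Zariski's Main Theorem (\cite[Tag 05K0]{stacksproject}) the quasi-finite separated morphism $W \to X$ factors as an open immersion $W \hookrightarrow T$ followed by a finite morphism $T \to X$; replacing $T$ by the scheme-theoretic image of $W$ I may assume $W$ is schematically dense in $T$. As $T \to X$ is finite, hence integral, transitivity of integral closure identifies the normalisation of $X$ in $W$ with the normalisation of $T$ in $W$, which, since $W$ is schematically dense in $T$, is the normalisation $\nu \colon \nc{T} \to T$; thus $\nc{\sh F} \cong \nc{T}$ over $X$. Now $\nu$ is an isomorphism over the normal locus of $T$, and $W$, being a normal open subscheme, lies in this locus, so $\nu^{-1}(W) \to W$ is an isomorphism exhibiting $W$ as an open subscheme of $\nc{\sh F}$. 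This open immersion is exactly $c$. In particular its image is open and the restriction of $\nc{\sh F} \to X$ to it is the \'etale morphism $W \to X$, so $c(W)$ is contained in the \'etale locus $V \subseteq \nc{\sh F}$ of $\nc{\sh F} \to X$.

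It then remains to see that $c(W)$ is all of $V$. Since $V$ is open in $\nc{\sh F}$ and $V \to X$ is \'etale, $V$ is itself an \'etale $X$-scheme, so \autoref{d:pushforward-normalisation} applies with $T = V$ and gives a bijection $W(V) = \nc{\sh F}(V)$ induced by $c$. Pulling the inclusion $V \hookrightarrow \nc{\sh F} \in \nc{\sh F}(V)$ back through this bijection yields a section $s \colon V \to W$ with $c \circ s$ equal to that inclusion; hence $V \subseteq c(W)$. Together with the reverse inclusion from the previous paragraph this gives $c(W) = V$, so $c$ is an open immersion identifying $W = j_*\sh F$ with the \'etale locus, as claimed.

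The main obstacle is the normalisation bookkeeping in the middle paragraph: passing to the scheme-theoretic image to make $W$ dense, invoking transitivity of normalisation in the tower $W \to T \to X$ to recognise $\nc{\sh F}$ as the normalisation of the compactification $T$, and checking that the \'etale (hence normal) scheme $W$ lands in the locus where $\nu$ is an isomorphism. Once $c$ is known to be an open immersion, the identification of its image with the \'etale locus is essentially formal given \autoref{d:pushforward-normalisation}.
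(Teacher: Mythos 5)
Your proof is correct and follows essentially the same route as the paper's: Zariski's Main Theorem gives the open immersion $j_*\sh F \to \nc{\sh F}$, and the identification of the image with the \'etale locus $V$ comes from the two inclusions you state — the image lies in $V$ by maximality of the \'etale locus, and $V$ factors through $j_*\sh F$ via \autoref{d:pushforward-normalisation} (the paper phrases this exactly as "$V$ is open in $\nc{\sh F}$ and \'etale over $X$, therefore factors through $j_*\sh F$; by maximality of $V$ we get $j_*\sh F = V$"). The only caveat is in your expanded ZMT step: you assert that $X$ is normal "as part of the standing hypothesis", but the hypothesis is only that the normalisation of $X$ in $U$ is $X$, which does not literally force $X$ to be normal (e.g.\ it holds vacuously when $U=X$); this is harmless here both because $X$ is normal in every application in the paper and because the form of ZMT for relative normalisations (a separated quasi-finite $X$-scheme is canonically an open subscheme of the normalisation of $X$ in it) yields the open immersion directly, without the detour through the normal locus of a finite compactification.
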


\begin{proof}
  The \'etale locus $V$ of $\nc{\sh F}$ over $X$ is open in $\nc{\sh F}$ and \'etale over $X$, therefore factors through $j_* \sh F$.
  By maximality of $V$ we get $j_* \sh F = V$.
\end{proof}

\subsection{Galois actions on finite locally constant sheaves}

Let $X$ be a scheme, and let $\Gamma$ be a group acting on $X$.
Then recall that a {\em $\Gamma$-sheaf} on $X_{\et}$ is a sheaf $\sh F$ on $X_{\et}$ of which the espace \'etal\'e is a $\Gamma$-equivariant $X$-space.

Let $X$ be a connected scheme, let $\Gamma$ be a finite group, and let $f \colon Y \to X$ be a finite \'etale connected Galois cover with Galois group $\Gamma$.
Note that pullback of sheaves defines an equivalence from the category of sheaves on $X_{\et}$ to that of $\Gamma$-sheaves on $Y_{\et}$.
A quasi-inverse is given in terms of sheaves by sending $\sh F$ to the sheaf of $\Gamma$-invariants of $f_* \sh F$; in terms of espaces \'etal\'es, it sends an algebraic space $Z$ \'etale over $Y$ to the quotient $\Gamma \backslash Z$.

If $\sh G$ is a finite locally constant sheaf of groups on $X_{\et}$ such that $f^{-1} \sh G$ is constant, let $G$ be the group of connected components of $f^{-1} \sh G$, and note that $\Gamma$ acts on $G$ by automorphisms.
Therefore we see that a finite locally constant sheaf on $X_{\et}$ with $\sh G$-action corresponds to a $\Gamma$- and $G$-equivariant finite \'etale $Y$-scheme.

Let us now apply this to the following situation.

\begin{situ} \label{s:galois-general}
  Let $k$ be a field.
  Suppose we have a finite group $\Gamma$, and a diagram of schemes of finite type over $k$
  \[
    \begin{tikzcd}
      V \ar[d,"g"'] \ar[r,"j'"] & Y \ar[d,"f"] & W \ar[d,"h"] \ar[l,"i'"'] \\
      U \ar[r,"j"'] & X & Z \ar[l,"i"]
    \end{tikzcd}
  \]
  where $U$ and $X$ are connected, $g$ is finite \'etale Galois with Galois group $\Gamma$, $Y$ is the normalisation of $X$ in $V$, $W = Y \times_X Z$, and $j$ is the open complement of $i$.
  Let $\sh G$ be a finite locally constant sheaf of groups on $\sh U$ such that $g^{-1} \sh G$ is constant, say with group of connected components $G$.
\end{situ}

Let $\cat T_{Z,U}$ be as in the previous section, and let $\cat T_{W,Y}^\Gamma$ be the category of which the objects are pairs $(\sh F,s)$ of a $\Gamma$-equivariant $G$-torsor $\sh F$ on $Y_{\et}$, and a $\Gamma$-equivariant section $s \in (i')^{-1}\sh F(W)$, and in which the morphisms $(\sh F,s) \to (\sh F',s')$ are the $\Gamma$-equivariant morphisms $f \colon \sh F \to \sh F'$ such that $(i')^{-1}(f)$ sends $s$ to $s'$.

\begin{lem} \label{d:galois-general}
  In \autoref{s:galois-general}, the rule attaching to a pair $(\sh F,s)$ of a $\sh G$-torsor $\sh F$ and a section $s \in i^{-1}j_*\sh F(Z)$ the pair $(j'_*g^{-1}\sh F,s)$ defines an equivalence $\cat T_{Z,U} \to \cat T_{W,Y}^\Gamma$ of categories.
\end{lem}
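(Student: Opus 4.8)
The plan is to build the equivalence from two independent pieces: the torsor datum, which I would handle by the Galois descent recalled above, and the section datum, which I would handle by a short formal computation with the finite pushforward $f_*$. Throughout I would use the commutative square of \autoref{s:galois-general}, in particular the relation $f\circ j' = j\circ g$ and that $f$ and $h$ are finite.

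First I would treat the torsor datum. Since $g$ is finite \'etale Galois with group $\Gamma$ and $g^{-1}\sh G$ is the constant sheaf $G$, the equivalence between sheaves on $U_{\et}$ and $\Gamma$-sheaves on $V_{\et}$ recalled in the previous subsection restricts to an equivalence between $\sh G$-torsors on $U$ and $\Gamma$-equivariant $G$-torsors on $V$, sending $\sh F$ to $g^{-1}\sh F$ with its tautological descent datum and having quasi-inverse $\sh E \mapsto (g_*\sh E)^{\Gamma}$. Composing with $j'_*$ produces the $\Gamma$-equivariant sheaf $j'_*g^{-1}\sh F$ on $Y$ appearing in the statement. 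On the level of torsors the functor is then fully faithful and essentially surjective, and it remains to match the section data compatibly and naturally in $\sh F$.

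Second, the section correspondence. Here I claim the natural isomorphism
\[
  i^{-1}j_*\sh F(Z) \;\cong\; \big((i')^{-1}j'_*g^{-1}\sh F(W)\big)^{\Gamma},
\]
which I would prove by the following chain. From $f\circ j' = j\circ g$ we get $f_*j'_* = j_*g_*$, so $f_*j'_*g^{-1}\sh F = j_*g_*g^{-1}\sh F$; taking $\Gamma$-invariants and using that $j_*$ is left exact (hence commutes with the finite limit $(-)^{\Gamma}$) together with the descent isomorphism $(g_*g^{-1}\sh F)^{\Gamma}\cong\sh F$ yields $(f_*j'_*g^{-1}\sh F)^{\Gamma}\cong j_*\sh F$. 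Applying $i^{-1}$ (again left exact) and the finite base change isomorphism $i^{-1}f_*\cong h_*(i')^{-1}$ for the finite morphism $f$ in the Cartesian square $W = Y\times_X Z$, and then taking global sections over $Z$ via $(h_*\sh E)(Z) = \sh E(W)$, produces the displayed bijection, with the $\Gamma$-action carried along at each step. Combined with the torsor equivalence and the naturality of every map involved, this gives full faithfulness and essential surjectivity (every object of $\cat T_{W,Y}^{\Gamma}$ is reached by descending its torsor on the open part and transporting its boundary section through the bijection).

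The step requiring the most care, and which I regard as the main obstacle, is this section correspondence, and within it the interaction of the finite pushforward $f_*$ with the group action and with the two localisations $j_*$ and $i^{-1}$. Concretely I must check that \emph{finite base change} $i^{-1}f_*\cong h_*(i')^{-1}$ holds for sheaves of sets, not merely for torsion abelian sheaves; this is true because $f$ is finite, so $(f_*\sh E)_{\bar x} = \prod_{\bar y\mapsto\bar x}\sh E_{\bar y}$ and this product over the fibre is compatible with base change. I must also verify that the $\Gamma$-equivariant structure produced by $j'_*g^{-1}$ coincides with the one coming from the descent datum on $\sh F$, so that $\Gamma$-invariant global sections on the two sides genuinely match. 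Once these compatibilities are established, the remaining verifications are formal.
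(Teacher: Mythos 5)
Your argument is, in outline, exactly the paper's: Galois descent for the torsor datum, and for the section datum the chain $i^{-1}j_*g_*g^{-1}\sh F = i^{-1}f_*j'_*g^{-1}\sh F = h_*(i')^{-1}j'_*g^{-1}\sh F$ obtained from $fj'=jg$ and (proper/finite) base change, followed by matching $\Gamma$-invariant sections over $Z$ with $\Gamma$-equivariant sections over $W$. The step you single out as the main obstacle is handled exactly as in the paper, and your justification of base change for sheaves of sets along the finite morphism $f$ is fine.

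The one point you gloss over is the well-definedness of the functor itself: an object of $\cat T_{W,Y}^\Gamma$ must be a $G$-\emph{torsor} on all of $Y_{\et}$, whereas $j'_*g^{-1}\sh F$ is a priori only a $G$-pseudotorsor there (it is one because $j'_*$ commutes with finite limits and $j'_*G = G$). Its stalk at a geometric point of $W$ is the set of sections of $g^{-1}\sh F$ over a punctured strictly henselian trait, which can be empty: the Kummer torsor $t \mapsto t^n$ on $\bb G_{m,k}$, pushed forward to $\bb A^1_k$, has empty stalk at the origin. So your two ``independent pieces'' are not actually independent: it is precisely the existence of the section $s \in (i')^{-1}j'_*g^{-1}\sh F(W)$ that forces these stalks to be non-empty and hence upgrades the pseudotorsor to a torsor; without it, $\sh F \mapsto j'_*g^{-1}\sh F$ does not even land in the target category. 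The paper dispatches this in one sentence (``a $\Gamma$-equivariant $G$-pseudotorsor which \'etale locally has a section, i.e.\ a $\Gamma$-equivariant $G$-torsor''); once you add it, your proof is complete and identical in substance to the paper's.
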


\begin{proof}
  First note that giving a $\sh G$-torsor $\sh F$ on $U_{\et}$ is equivalent to giving the $\Gamma$-equivariant $G$-torsor $g^{-1}\sh F$ on $V_{\et}$.
  Moreover, giving the section $s \colon Z \to i^{-1}j_* \sh F$ is the same as giving a $\Gamma$-invariant section $Z \to i^{-1}j_* g_* g^{-1} \sh F = i^{-1}f_*j'_*g^{-1} \sh F = h_* (i')^{-1} j'_* g^{-1} \sh F$, where the last step uses proper base change.
  This is the same as giving a $\Gamma$-equivariant section $W \to (i')^{-1} j'_* g^{-1} \sh F$.

  Now $j'_*g^{-1}\sh F$ is a $\Gamma$-equivariant $G$-pseudotorsor which \'etale locally has a section, i.e.~a $\Gamma$-equivariant $G$-torsor.
  Therefore giving the pair $(g^{-1}\sh F,s)$ is equivalent to giving $(j'_*g^{-1}\sh F,s)$, as desired.
\end{proof}
 
\subsection{Stacks of torsors}

Let $S$ be a scheme.
A {\em topologically finite \'etale} $S$-scheme $T$ is a morphism $T \to S$ that factors as a composition $T \to T' \to S$ with $T' \to S$ finite \'etale and $T \to T'$ a universal homeomorphism.

Let $f \colon X \to S$ be a proper smooth curve, let $i \colon Y \to X$ be a closed immersion, topologically finite \'etale over $S$, and let $j \colon U \to X$ denote its open complement.
Write $h = fi$ and $g = fj$.
Let $G$ be a finite group; if $Y$ is non-empty, we also assume that the order of $G$ is invertible on $S$.

Let $\cat T$ denote the fppf stack of $G$-torsors on $U_{\et}$; i.e.~its objects are pairs $(T,\sh F)$ of an $S$-scheme $T$ and a $G$-torsor $\sh F$ on $(U \times_S T)_{\et}$, and the morphisms $(T,\sh F) \to (T', \sh F')$ are the pairs of a morphism $\phi \colon T \to T'$ and an isomorphism $\phi^{-1} \sh F' \to \sh F$.
We show that $\cat T$ has a representable and finite \'etale diagonal, or equivalently, the relevant Isom-sheaves are representable by finite \'etale schemes.

Without loss of generality, and to ease notation a bit, we will only consider the Isom-sheaves on $S$.
More precisely, let $\sh F$ and $\sh F'$ be $G$-torsors on $U_{\et}$, and let $\sh I$ denote the sheaf on $\site{U}_{\fppf}$ sending $\phi \colon T \to U$ to the set $\Isom_T(\phi^{-1}\sh F,\phi^{-1}F')$ of isomorphisms of $G$-torsors.
We denote by $g_{\bsit,*}$ the big pushforward functor $\site U_{\fppf} \to \site S_{\fppf}$.

\begin{lem}
  The sheaf $g_{\bsit,*} \sh I$ on $\site S_{\fppf}$ is representable by a finite \'etale $S$-scheme.
\end{lem}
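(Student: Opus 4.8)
The plan is to make $\sh I$ concrete as a finite \'etale $U$-scheme, then represent $g_{\bsit,*}\sh I$ by an open subscheme of a Weil restriction of the normalisation supplied by \autoref{d:pushforward-normalisation}, and finally to verify \emph{finite \'etale} by checking separatedness, \'etaleness, quasi-finiteness and properness directly. First I would observe that, since $\sh F$ and $\sh F'$ are $G$-torsors, they are \'etale-locally trivial and hence \'etale-locally isomorphic, so $\sh I = \Isom_G(\sh F,\sh F')$ is a torsor under the automorphism sheaf $\sAut_G(\sh F)$, a finite locally constant sheaf of groups of order $\#G$. Thus $\sh I$ is finite locally constant on $U_\et$, so representable by a finite \'etale $U$-scheme $I \to U$ of degree $\#G$. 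Unwinding the big pushforward, for every $S$-scheme $T$ one has $(g_{\bsit,*}\sh I)(T) = \sh I(U\times_S T) = \Gamma(U_T, I_T)$, the set of sections of $I_T := I\times_S T$ over $U_T := U\times_S T$; the functor to be represented is therefore one of relative sections.

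For representability I would introduce $\nc I$, the normalisation of $X$ in $I$, which is finite over $X$, so $\nc I \to S$ is proper, and whose \'etale locus over $X$ is exactly $J = j_*\sh I$ by the corollary to \autoref{d:pushforward-normalisation} (an \'etale, quasi-compact, separated $X$-scheme with $J|_U = I$). Since $X\to S$ is a proper smooth curve it is Zariski-locally projective, and $\nc I \to X$ is finite, so $\mathrm{Res}_{X/S}(\nc I)$ is representable by a scheme; the condition that a section factor through the open subscheme $J$ cuts out an open subscheme $M \subseteq \mathrm{Res}_{X/S}(\nc I)$. I would then compare $M$ with $g_{\bsit,*}\sh I$ via restriction of sections to $U_T$. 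This map is injective for every $T$: because $Y$ is finite over $S$, the open $U_T$ is fibrewise dense, hence schematically dense in $X_T$ by flatness of $X_T\to T$, and two sections of the separated $\nc I_T\to X_T$ agreeing on a schematically dense open coincide.

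Having a representable candidate, I would prove $M\to S$ finite \'etale. Formal \'etaleness is clean and does not touch the normalisation: for a nilpotent thickening $T_0\hookrightarrow T$ the induced $U_{T_0}\hookrightarrow U_T$ is again one, and sections of the \'etale morphism $I_T\to U_T$ lift uniquely along it, so $(g_{\bsit,*}\sh I)(T)\to(g_{\bsit,*}\sh I)(T_0)$ is bijective; with local finite presentation (constructibility of $\sh I$) this yields \'etaleness. The geometric fibres $\Gamma(U_{\bar s}, I_{\bar s})$ are finite, giving quasi-finiteness, and separatedness is clear. For properness I would use the valuative criterion: given a discrete valuation ring $R$ with fraction field $K$ and a section of $I$ over $U_K$, its schematic closure in $I_R$ is finite and birational over $U_R$, which is regular (as $R$ is regular and $X_R\to\Spec R$ smooth) and hence normal, so by Zariski's main theorem it is an isomorphism onto a clopen subscheme, producing the unique extension over $U_R$. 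Thus $M\to S$ is separated, \'etale, quasi-finite and proper, hence finite \'etale.

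\textbf{Main obstacle.} The genuinely delicate point is the \emph{surjectivity} of the restriction map $M\to g_{\bsit,*}\sh I$, i.e.\ that every section of $I_T$ over $U_T$ extends (through $J$) over $X_T$ for an arbitrary test scheme $T$. For non-reduced or otherwise badly behaved $T$ the fibre product $X_T$ need not be normal, so \autoref{d:pushforward-normalisation} cannot be applied over $T$ directly. I expect this base-change compatibility of the pushforward to be the crux, and I would resolve it without ever extending sections over a general $X_T$: injectivity is already free from the schematic density above, the extension is needed only over geometric points $\bar s$, where $X_{\bar s}$ is a smooth (hence normal) curve over a field and \autoref{d:pushforward-normalisation} does apply, and the formal \'etaleness just established rules out any infinitesimal discrepancy. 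A monomorphism of \'etale, locally finitely presented $S$-functors that is bijective on geometric points is an isomorphism, so $M=g_{\bsit,*}\sh I$, completing the identification.
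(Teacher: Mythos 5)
Your construction is genuinely different from the paper's (which gets algebraicity of $g_{\bsit,*}\sh I$ from Hilbert-scheme theory, deduces \'etaleness from formal \'etaleness, and then quotes SGA~1, Exp.~XIII for the finite local constancy of $g_*(\sh I|_{U_{\et}})$), but it has a real gap at exactly the point you flag as the main obstacle, and the repair you propose does not work. The telltale sign is that your argument never uses the hypothesis that $\#G$ is invertible on $S$ when $Y\neq\emptyset$, and the lemma is false without it. Take $S=\Spec \bb F_p[[t]]$, $X=\bb P^1_S$, $Y=\infty_S$, $U=\bb A^1_S$, $G=\bb Z/p\bb Z$, $\sh F$ the trivial torsor and $\sh F'$ the Artin--Schreier torsor $y^p-y=tx$, so that $\sh I\iso\sh F'$. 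Then $(g_{\bsit,*}\sh I)(\Spec \bb F_p)$ has $p$ elements (the constants $y=c$) while $(g_{\bsit,*}\sh I)(\Spec \bb F_p((t)))$ is empty (solving $f^p-f=tx$ forces $a_{p^k}\neq 0$ for all $k$, so no polynomial solution exists); since an \'etale $S$-scheme with a point in the closed fibre has open, hence full, image in $\Spec \bb F_p[[t]]$, this functor is not finite \'etale. Your chain of verifications (formal \'etaleness, quasi-finiteness, separatedness, the valuative criterion) all go through verbatim in this example, so the failure must sit in the step you defer: the surjectivity of $M(\bar s)\to (g_{\bsit,*}\sh I)(\bar s)$ on geometric points. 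There you invoke \autoref{d:pushforward-normalisation} over $\bar s$, but that proposition produces a section of the normalisation of the fibre $X_{\bar s}$ in $I_{\bar s}$, whereas a point of $M(\bar s)$ is a section of the fibre $(\nc I)_{\bar s}$ of the \emph{relative} normalisation landing in the relative \'etale locus $J$. Normalisation does not commute with passage to fibres: in the example the special fibre of $\nc I$ over $\infty$ is $p$ branches meeting in a single non-\'etale point, the extended section passes through that point, and so it does not factor through $J$. Tameness is precisely what excludes this, and any correct proof must use it somewhere; in the paper it enters through the SGA~1 citation.

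Two further points. Over an arbitrary base $S$ the scheme $X$ need not be normal or Nagata, so the relative normalisation $\nc I$ need not even be finite over $X$, and the corollaries of \autoref{d:pushforward-normalisation} identifying $j_*\sh I$ with the \'etale locus of $\nc{\sh I}$ are proved in the paper only for $X$ of finite type over a field; your definition of $M$ therefore needs a limit argument and a separate fibrewise analysis before it makes sense in the stated generality. Finally, the closing principle ``a monomorphism of \'etale, locally finitely presented $S$-functors that is bijective on geometric points is an isomorphism'' is a theorem about algebraic spaces, not about bare fppf sheaves; the example above exhibits a formally \'etale, locally finitely presented functor that is not an algebraic space at all. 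To use that principle you must first know that $g_{\bsit,*}\sh I$ is algebraic (or at least that its diagonal is representable by open and closed immersions, which one can extract from the torsor structure of $\sh I$ together with connectedness of the fibres of $U\to S$) --- and that representability input is exactly what the paper's appeal to Hilbert schemes is supplying.
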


\begin{proof}
  By the theory of Hilbert schemes, the fppf sheaf $g_{\text{big},*} \sh I$ is representable by an algebraic space, locally of finite presentation over $S$, since both $\sh F$ and $\sh F'$ are representable by finite \'etale algebraic spaces over $U$.
  Moreover, we easily see that it is formally \'etale over $S$, therefore \'etale over $S$.
  Hence it is representable by the espace \'etal\'e of $(g_{\text{big},*} \sh I)|_{S_{\et}} = g_*(\sh I|_{U_{\et}})$.

  Now we note that $\sh I|_{U_{\et}}$ is a $G$-torsor on $U_{\et}$, so its pushforward under $g$ is finite locally constant by \cite[Exp.~XIII; Prop.~1.14, Thm.~2.4]{sga1}; this uses the additional assumption on the order of $G$ if $Y$ is non-empty.
  Hence $g_{\text{big},*} \sh I$ is representable by a finite \'etale $S$-scheme.
\end{proof}

In addition, let $\Gamma$ be a finite group acting on $G$, and on $X$ over $S$, such that $Y$ (and therefore $U$) is stable under $\Gamma$, and let $k \colon Z \to U$ be a closed immersion stable under $\Gamma$, and let $l \colon V \to U$ be its open complement.

Let $\cat T'$ denote the fppf stack of $\Gamma$-equivariant $l_!G$-torsors on $U_{\et}$; i.e.~its objects are pairs $(T,\sh F)$ of an $S$-scheme $T$ and a $\Gamma$-equivariant $l_!A$-torsor $\sh F$ on $(U \times_S T)_{\et}$, and the morphisms $(T,\sh F) \to (T', \sh F')$ are the pairs of a morphism $\phi \colon T \to T'$ and a $\Gamma$-equivariant isomorphism $\phi^{-1} \sh F' \to \sh F$.
We show that $\cat T'$ too has a representable and finite \'etale diagonal.

Again, without loss of generality, we will only consider the relevant Isom-sheaves on $S$.
Let $\sh F$ and $\sh F'$ be $\Gamma$-equivariant $l_!\sh G$-torsors on $U_{\et}$, and let $\sh I'$ denote the sheaf on $\site U_{\fppf}$ sending $\phi \colon T \to U$ to the set of $\Gamma$-invariant isomorphisms $\phi^{-1} \sh F \to \phi^{-1} \sh F'$ of $l_!G$-torsors.

\begin{cor}
  The sheaf $g_{\bsit,*} \sh I'$ on $\site S_{\fppf}$ is representable by a finite \'etale $S$-scheme.
\end{cor}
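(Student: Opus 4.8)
The plan is to reduce to the preceding lemma by restricting all isomorphisms to the open subscheme $V$. Since $Z$ is closed in $U$ and $Y = X \setminus U$ is closed in $X$, the union $Y \cup Z = X \setminus V$ is closed in $X$ and is topologically finite \'etale over $S$, being a disjoint union of two such; thus $V \to X$ is the open complement of a topologically finite \'etale closed subscheme, exactly as in the hypotheses of the lemma. Write $\sh F_V = l^{-1}\sh F$ and $\sh F'_V = l^{-1}\sh F'$ for the underlying $G$-torsors on $V$. Applying the preceding lemma with $U$ replaced by $V$ and $Y$ replaced by $Y \cup Z$ (using that $|G|$ is invertible on $S$ whenever $Y \cup Z$ is non-empty), I obtain that $J := (gl)_{\bsit,*}\Isom_G(\sh F_V,\sh F'_V)$ is representable by a finite \'etale $S$-scheme, carrying a $\Gamma$-action induced by the action of $\Gamma$ on $X$ over $S$ together with the equivariant structures on $\sh F$ and $\sh F'$.

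First I would realise the non-equivariant Isom-sheaf of $l_!G$-torsors as an open and closed subscheme of $J$. By recollement on $U$ for the pair $(l,k)$ --- the equivalence between $l_!G$-torsors on $U$ and pairs of a $G$-torsor on $V$ with a section over $Z$ --- write $\sh F \leftrightarrow (\sh F_V,s)$ and $\sh F' \leftrightarrow (\sh F'_V,s')$, with $s,s'$ sections of $k^{-1}l_*(-)$ over $Z$. Over any $S$-scheme $T$, an isomorphism of $l_!G$-torsors $\sh F|_{U_T} \to \sh F'|_{U_T}$ is then the same as an isomorphism $\psi \colon \sh F_V|_{V_T} \to \sh F'_V|_{V_T}$ of $G$-torsors with $k^{-1}l_*(\psi)(s) = s'$. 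Hence restriction along $l$ identifies $g_{\bsit,*}\sh I'$ before imposing $\Gamma$-invariance with the locus in $J$ where $\psi \mapsto k^{-1}l_*(\psi)(s)$ agrees with $s'$.

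Next I would check this locus is open and closed, and likewise for the $\Gamma$-invariance. The assignment $\psi \mapsto k^{-1}l_*(\psi)(s)$ is a morphism from $J$ to the $S$-scheme representing sections over $Z$ of $k^{-1}l_*\sh F'_V$; the latter is finite \'etale over $S$ because $l_*\sh F'_V$ is the \'etale locus of a finite normalisation by \autoref{d:pushforward-normalisation} and its corollaries --- so $k^{-1}l_*\sh F'_V$ is finite \'etale over $Z$ --- and $Z \to S$ is topologically finite \'etale. The $Z$-compatible locus is the preimage of the section $s'$; as any section of a finite \'etale (hence unramified and separated) $S$-scheme is an open and closed immersion, this locus is open and closed in $J$, and so finite \'etale over $S$. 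Finally, $g_{\bsit,*}\sh I'$ is the $\Gamma$-fixed subscheme of this finite \'etale $S$-scheme, i.e.~the intersection over $\sigma \in \Gamma$ of the equalisers of $\mathrm{id}$ and $\sigma$; each such equaliser is the preimage of the (open and closed) diagonal under $(\mathrm{id},\sigma)$ and is therefore open and closed, whence so is the intersection. Thus $g_{\bsit,*}\sh I'$ is finite \'etale over $S$.

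The step I expect to be the main obstacle is the very first reduction: an $l_!G$-torsor is not finite \'etale over $U$, since its espace \'etal\'e degenerates over $Z$, so the Hilbert-scheme argument of the lemma cannot be run directly on $U$. The essential move is the passage to $V$, where the torsors become genuine $G$-torsors and the lemma applies unchanged; the remaining work is the bookkeeping showing that the section-compatibility at $Z$ and the $\Gamma$-invariance each cut out an open and closed subscheme, which rests on their being, respectively, an equality of sections into, and a fixed locus of automorphisms of, finite \'etale $S$-schemes.
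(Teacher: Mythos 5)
Your argument is correct and is essentially the paper's own proof written out in full: the paper simply notes that, via the recollement decomposition from the last step of the proof of \autoref{d:galois-general}, the sheaf $g_{\bsit,*}\sh I'$ is a finite limit of finite \'etale $S$-schemes (the Isom-scheme over $V$ supplied by the preceding lemma, cut down by the section-matching condition over $Z$ and the $\Gamma$-invariance equalisers), hence finite \'etale. Your open-and-closed verifications are precisely the justification that such finite limits of finite \'etale $S$-schemes remain finite \'etale, so the two arguments coincide in substance.
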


\begin{proof}
  By the last step in the proof of \autoref{d:galois-general}, we see that we can write $g_{\bsit,*} \sh I'$ as a finite limit of finite \'etale $S$-schemes, which therefore is finite \'etale over $S$ as well.
\end{proof}

Finally, let $k$ be a field, let $f \colon X \to \Spec k$ be a smooth connected curve, let $i \colon Z \to X$ be a closed immersion, and let $j \colon U \to X$ denote its open complement.
Let $\sh G$ be a finite locally constant sheaf of groups on $U_{\et}$, let $g \colon V \to U$ be a finite \'etale Galois cover with finite Galois group $\Gamma$ such that $g^{-1} \sh G$ is constant, say with $\Gamma$-module of connected components $G$, and let $Y$ be the normal completion of $V$; by replacing $k$ by a finite purely inseparable extension if necessary, we may assume $Y$ is smooth over $k$.
Let $i' \colon W \to Y$ be the base change of $i$ to $Y$, let $j' \colon V \to Y$ be the canonical open immersion, and note that $Z$ and $W$ are automatically topologically finite \'etale over $k$.

Now in \autoref{s:galois-general}, under the additional assumption that $X$ and $Y$ are curves that have smooth normal completions, we see that the stack of $j_!\sh G$-torsors is equivalent to that of $\Gamma$-equivariant $j'_! G$-torsors by \autoref{d:galois-general}, which satisfies the condition of \autoref{d:stack-key-lemma} by the above.
We will take this as the starting point of our computation in the next section.

\section{Computation of a groupoid scheme} \label{h:groupoid}

We will consider the following situation.

\begin{situ} \label{s:groupoid}
  Let $k$ be a field.
  Suppose we have a finite group $\Gamma$, and a diagram of connected curves over $k$
  \[
    \begin{tikzcd}
      V \ar[r] \ar[d,"g"'] & Y \ar[r] \ar[d,"f"] & \nc{Y} \ar[d] \\
      U \ar[r] \ar[d] & X \ar[r] \ar[d] & \nc{X} \ar[d] \\
      \bb P^1_k - S_0 - S_\infty \ar[r] & \bb P^1_k - S_\infty \ar[r] & \bb P^1_k
    \end{tikzcd}
  \]
  in which:
  \begin{itemize}
    \item $S_0 \in \set*{0,\emptyset}$; $S_\infty \in \set*{\infty,\emptyset}$;
    \item all squares are cartesian;
    \item $\nc{X} \to \bb P^1_k$ is finite locally free;
    \item $V \to U$ is finite \'etale Galois with Galois group $\Gamma$;
    \item $\nc{Y}$ is the normal completion of $V$.
  \end{itemize}
  Assume that $\nc{X}$ and $\nc{Y}$ are smooth over $k$.

  Let $\sh G$ be a finite locally constant sheaf of groups on $U_{\et}$ such that $g^{-1} \sh G$ is constant, say with group of connected components $G$.
\end{situ}

We assume the situation above is given by the smooth curves $\nc{X}$ and $\nc{Y}$ given using the description in \autoref{h:standard-modules}, the finite group $\Gamma$ and its action on $\nc{Y}/\nc{X}$, the group $G$ with $\Gamma$-action, and the choice of sets $S_0$ and $S_1$;  note that we haven't explained yet how to decide whether such an input is valid, but the characterisations in this section and the introduction of the next section will allow us to do so.
We wish to compute the Galois set $\hl^1(X,j_!\sh G)$, keeping \autoref{d:stack-key-lemma} in mind (after base change to $k^{\perf}$).
We therefore would like to give a description of, for every perfect field extension $l$ of $k$, the category $\cat T(l)$ of $j_!\sh G$-torsors on $(X_l)_{\et}$, purely in terms of (commutativity relations between) morphisms of vector bundles over $\bb P^1_l$ and free modules over $l$, and we can use \autoref{h:standard-modules} to construct a groupoid with the desired properties.

By \autoref{d:galois-general}, $\cat T(l)$ is equivalent to that of $\Gamma$-equivariant $G$-torsors on $(Y_l)_{\et}$, together with a $\Gamma$-equivariant section from $\nc{Y_l} \times_{\bb P^1_l} S_0$.
(We note that in case $S_0 = \emptyset$, the empty morphism is $\Gamma$-equivariant.)
By taking normal completions, we obtain the following.

\begin{prop}
  Let $l$ be a perfect field extension of $k$.
  Then the category $\cat T(l)$ is equivalent to that of finite locally free $\bb P^1_l$-schemes $T$, smooth over $l$, together with a $\Gamma$-equivariant $G$-action, a $\Gamma$-equivariant morphism $T \to \nc{Y_l}$ and a $\Gamma$-equivariant section $\nc{Y_l} \times_{\bb P^1_l} S_0 \to T \times_{\bb P^1_l} S_0$, such that $T \times_{\bb P^1_l} (\bb P^1_l - S_\infty)$ is a $G$-torsor on $(Y_l)_{\et}$; here, the morphisms are the $\Gamma$-equivariant, $G$-equivariant morphisms of $\nc{Y_l}$-schemes.
\end{prop}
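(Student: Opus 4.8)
The plan is to start from the equivalence recalled just above the statement: by \autoref{d:galois-general}, $\cat T(l)$ is equivalent to the category of pairs $(\sh F,s)$, where $\sh F$ is a $\Gamma$-equivariant $G$-torsor on $(Y_l)_{\et}$ and $s$ is a $\Gamma$-equivariant section of $\sh F$ over $\nc{Y_l}\times_{\bb P^1_l}S_0$ (which lies in $Y_l$, since $S_0$ and $S_\infty$ are disjoint). Viewing such an $\sh F$ through its espace \'etal\'e as a finite \'etale $Y_l$-scheme, I want to pass to its normal completion over $\bb P^1_l$ and show that this upgrades the equivalence to the category described in the statement.

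First I would define the two functors. In one direction, send $(\sh F,s)$ to $T:=\nc{\sh F}$, the normalisation of $\nc{Y_l}$ (equivalently, by transitivity, of $\bb P^1_l$) in the finite \'etale $Y_l$-scheme $\sh F$. Since normalisation is functorial, the $G$- and $\Gamma$-actions on $\sh F$ extend uniquely to $T$, and $T$ carries its structural morphism $T\to\nc{Y_l}$. Because normalisation commutes with the smooth (here open) base change $Y_l\to\nc{Y_l}$ used in \autoref{d:pushforward-normalisation}, and because $\sh F$ is already normal over the normal scheme $Y_l$, the restriction $T\times_{\bb P^1_l}(\bb P^1_l-S_\infty)=\nc{\sh F}\times_{\nc{Y_l}}Y_l$ recovers $\sh F$ itself; hence it is the required $G$-torsor, and $s$ becomes a $\Gamma$-equivariant section over $T\times_{\bb P^1_l}S_0=\sh F|_{\nc{Y_l}\times_{\bb P^1_l}S_0}$. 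In the other direction, send a $T$ as in the statement to $(\sh F:=T\times_{\bb P^1_l}(\bb P^1_l-S_\infty),s)$ with $s$ the given section; by hypothesis $\sh F$ is a $G$-torsor on $(Y_l)_{\et}$, and the equivariances and the section transport directly.

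The substance is then to check that these functors are well defined and mutually quasi-inverse. That $T=\nc{\sh F}$ is \emph{smooth} over $l$ is exactly where perfectness of $l$ is used: $T$ is a normal curve, hence regular, and over a perfect field a regular curve is smooth. That $T\to\bb P^1_l$ is \emph{finite locally free} follows because normalisation over a field is finite, and a finite morphism from a Cohen--Macaulay (here regular) scheme of pure dimension one to the regular scheme $\bb P^1_l$ is flat by miracle flatness, hence locally free. The composite $(\sh F,s)\mapsto T\mapsto\sh F$ is the identity by the restriction computation above. For the composite $T\mapsto\sh F\mapsto\nc{\sh F}$, I would argue that $T$ is already the normalisation of $\nc{Y_l}$ in $\sh F$: it is normal, finite over $\nc{Y_l}$, and $\sh F=T\times_{\bb P^1_l}(\bb P^1_l-S_\infty)$ is dense in $T$, the last point because flatness of $T$ over $\bb P^1_l$ forces every associated point of $T$ to dominate $\bb P^1_l$ while the fibre over $S_\infty$ is a proper closed subset. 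Full faithfulness on morphisms likewise follows from the universal property of normalisation, and compatibility with the $G$- and $\Gamma$-actions and with the sections is immediate from functoriality together with the identification $T\times_{\bb P^1_l}S_0=\sh F|_{\nc{Y_l}\times_{\bb P^1_l}S_0}$ on both sides.

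I expect the main obstacle to be this ``back-then-forward'' identification $T=\nc{\sh F}$: one must rule out that an object $T$ of the target category acquires extra components, or fails to be a normalisation, after being cut down to $Y_l$ and completed again. The two facts that make this work are that $T$ is normal (from smoothness over the perfect field $l$) and that $\sh F$ is dense in $T$ (from finite local freeness over $\bb P^1_l$); once these are in place, the universal property of normalisation identifies $T$ with $\nc{\sh F}$ uniquely and compatibly with all the extra structure.
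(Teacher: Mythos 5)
Your proof is correct and follows the same route the paper takes: the paper derives this proposition from \autoref{d:galois-general} with only the remark ``by taking normal completions,'' and your argument is precisely a careful execution of that step, using the normalisation machinery of \autoref{d:pushforward-normalisation} together with the standard facts (normal over a perfect field implies smooth, miracle flatness for finiteness over $\bb P^1_l$, density of the torsor in its completion) to verify that completion and restriction are quasi-inverse. The points you single out as the substance --- especially the identification $T = \nc{\sh F}$ in the back-then-forward composite --- are exactly the details the paper leaves implicit.
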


So aside from the conditions ``smooth over $l$'' and ``$T \times_{\bb P^1_l} (\bb P^1_l - S_\infty)$ is a $G$-torsor on $(Y_l)_{\et}$'', the data in the description above can easily be expressed in terms of morphisms of vector bundles on $\bb P^1_l$, and the relations in the description above can be easily expressed in terms of commutativity relations between these morphisms.

\subsection{Torsors} \label{h:groupoid-torsor}

Let us first consider the condition ``$T \times_{\bb P^1_l} (\bb P^1_l - S_\infty)$ is a $G$-torsor on $(Y_l)_{\et}$''.
To this end, we first express the condition ``$T$ is finite locally free over $\bb P^1_l$ of constant rank'', using the fibre of $Y_l$ above $0 \in \bb P^1_l$; this is not automatic as we didn't assume $\nc{Y}$ to be geometrically connected.

\begin{lem}
  Let $S$ be a scheme, let $X$ be a finite locally free $\bb P^1_S$-scheme that is smooth over $S$.
  Let $Y$ be an $X$-scheme that is finite locally free over $\bb P^1_S$, such that $Y \times_{\bb P^1_S} 0$ is finite locally free over $X \times_{\bb P^1_S} 0$ of constant rank $n$.
  Then $Y$ is a finite locally free $X$-scheme of constant rank $n$.
\end{lem}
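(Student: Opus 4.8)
The plan is to verify the three assertions---finiteness, flatness, and constancy of the rank---separately, deducing flatness over $X$ from flatness over the base $S$ fibrewise.

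First I would establish that $g \colon Y \to X$ is finite. Both $Y \to \bb P^1_S$ and $X \to \bb P^1_S$ are finite, and $g$ is a morphism over $\bb P^1_S$; since $X \to \bb P^1_S$ is separated and $Y \to \bb P^1_S$ is affine and proper, the cancellation properties for affine and for proper morphisms show that $g$ is affine and proper, hence finite. As $Y \to \bb P^1_S$ and $X \to \bb P^1_S$ are of finite presentation and $X \to \bb P^1_S$ is quasi-separated and locally of finite type, $g$ is of finite presentation as well. I also record that both $Y$ and $X$ are flat over $S$: $X$ is flat because it is smooth, and $Y$ is flat because $Y \to \bb P^1_S$ is finite locally free and $\bb P^1_S \to S$ is smooth.

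For flatness of $g$ I would use the fibre-by-fibre criterion \cite[Tag 039A]{stacksproject}: since both $Y$ and $X$ are flat over $S$, the morphism $g$ is flat at a point $y \in Y$ as soon as the induced morphism on fibres $g_s \colon Y_s \to X_s$ over the image $s$ of $y$ in $S$ is flat at $y$. This reduces the claim to the case $S = \Spec k$ with $k$ a field. Over a field, $X$ is a smooth curve, hence regular of dimension $1$, while $Y$ is finite locally free over $\bb P^1_k$ and therefore Cohen--Macaulay of pure dimension $1$; and $g$ is finite. At a closed point $x$ the local ring $\sh O_{X,x}$ is a discrete valuation ring, and a uniformiser is a system of parameters for the semilocal finite extension $\sh O_Y \otimes_{\sh O_X} \sh O_{X,x}$, hence a nonzerodivisor on this Cohen--Macaulay ring; a finite torsion-free module over a discrete valuation ring is free, so $g$ is flat there (equivalently, this is miracle flatness \cite[Tag 00R4]{stacksproject}). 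At the generic points the relevant local rings of $X$ are fields, so flatness is automatic. Thus $g_s$ is flat, and by the fibre-by-fibre criterion $g$ is flat; being finite, flat, and of finite presentation, $g$ is finite locally free.

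It remains to show that the rank is the constant $n$. The rank of the locally free sheaf $g_* \sh O_Y$ is a locally constant function $r$ on $X$, hence constant on each connected component. Since $Y \times_{\bb P^1_S} 0 = Y \times_X (X \times_{\bb P^1_S} 0)$, the base change of $g$ along $X_0 := X \times_{\bb P^1_S} 0 \hookrightarrow X$ is exactly $Y_0 \to X_0$, so $r$ equals $n$ at every point of $X_0$ by hypothesis. I would then check that every connected component $C$ of $X$ meets $X_0$: after reducing to Noetherian $S$ the component $C$ is open and closed, and $C \to \bb P^1_S$ is both open (flat and of finite presentation) and closed (finite), so its image is a clopen subset of $\bb P^1_S$; as $\bb P^1_S \to S$ has connected fibres, this image is $\bb P^1_{S'}$ for a clopen $S' \subseteq S$, which contains the zero-section over $S'$. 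Surjectivity of $C \to \bb P^1_{S'}$ then forces $C \cap X_0 \neq \emptyset$. Therefore $r \equiv n$ on all of $X$.

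The main obstacle is the flatness step: because $X$ is only assumed smooth over $S$ and not regular, miracle flatness cannot be applied directly over $S$, and one is forced to pass to the fibres of $S$, where the base becomes a field and $X_s$ becomes genuinely regular. For general $S$ the criteria quoted above, as well as the openness of connected components used for the rank, apply after the standard reduction to Noetherian $S$, which is legitimate since all morphisms in sight are of finite presentation.
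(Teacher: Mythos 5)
Your proof is correct and follows essentially the same route as the paper's one-line argument: the crucial point in both is that every (fibrewise) component of $X$ is finite flat onto $\bb P^1$, hence meets $X \times_{\bb P^1_S} 0$, which forces the locally constant rank to be $n$ everywhere. You additionally spell out the flatness of $Y \to X$ (fibrewise criterion plus miracle flatness on the fibres over $S$), which the paper leaves implicit.
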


\begin{proof}
  We can check fibrewise on $S$ that $X \times_{\bb P^1_S} 0$ intersects all components of $X$, from which our claim follows.
\end{proof}

Since finite locally free modules over an Artinian ring are free, we have the following.

\begin{cor}
  Let $l$ be a perfect field extension of $k$.
  Then the category of finite locally free $\nc{Y_l}$-schemes of constant rank is equivalent to that of finite locally free $\bb P^1_l$-schemes $T$ together with a morphism $T \to \nc{Y_l}$ and an $\sh O(\nc{Y_l} \times_{\bb P^1_l} 0)$-basis for $\sh O(T \times_{\bb P^1_l} 0)$ (morphisms in this category are simply morphisms of $\nc{Y_l}$-schemes).
\end{cor}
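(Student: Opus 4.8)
The plan is to realise the equivalence as the evident basis-forgetting functor $F$ from the second category to the first: a triple $(T,\ T \to \nc{Y_l},\ b)$, where $T$ is finite locally free over $\bb P^1_l$ and $b$ is an $\sh O(\nc{Y_l} \times_{\bb P^1_l} 0)$-basis for $\sh O(T \times_{\bb P^1_l} 0)$, is sent to the pair $(T,\ T \to \nc{Y_l})$. I would then verify that $F$ is well-defined (lands in the first category), fully faithful, and essentially surjective.

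First I would check well-definedness, which is the only step with real content and is exactly the preceding lemma. Apply it with $S = \Spec l$, $X = \nc{Y_l}$ and $Y = T$: by the standing assumptions of Situation~\ref{s:groupoid} the scheme $\nc{Y_l}$ is finite locally free over $\bb P^1_l$ and smooth over $l$, while $T$ is finite locally free over $\bb P^1_l$ by hypothesis. A basis $b$ of cardinality $n$ exhibits $\sh O(T \times_{\bb P^1_l} 0)$ as free, in particular finite locally free of constant rank $n$, over $\sh O(\nc{Y_l} \times_{\bb P^1_l} 0)$; this is precisely the hypothesis on the fibre over $0$. The lemma then yields that $T$ is finite locally free over $\nc{Y_l}$ of constant rank $n$, so $F$ does land in the first category.

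Full faithfulness is immediate: in both categories the morphisms are by definition the morphisms of $\nc{Y_l}$-schemes, and since the basis data is not required to be respected, $F$ is literally the identity on Hom-sets. For essential surjectivity — in fact plain surjectivity on objects — I would start from a finite locally free $\nc{Y_l}$-scheme $T$ of constant rank $n$. Composing with the finite locally free morphism $\nc{Y_l} \to \bb P^1_l$ shows that $T$ is finite locally free over $\bb P^1_l$, and base-changing the rank-$n$ module along $0 \to \bb P^1_l$ shows that $\sh O(T \times_{\bb P^1_l} 0)$ is finite locally free of constant rank $n$ over $\sh O(\nc{Y_l} \times_{\bb P^1_l} 0)$. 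As the latter is a finite $l$-algebra, hence Artinian, this module is free; choosing any basis $b$ produces a triple with $F(T,\ T \to \nc{Y_l},\ b) = (T,\ T \to \nc{Y_l})$.

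I expect no genuine obstacle here: the argument is essentially bookkeeping, with the single substantive input being the preceding lemma, which is what promotes ``finite locally free over $\bb P^1_l$ with a free fibre over $0$'' to ``finite locally free over $\nc{Y_l}$ of constant rank''. The only point worth stating carefully is that morphisms in the second category ignore the chosen bases, so that full faithfulness is a tautology rather than something requiring a compatibility check; and that ``free'' (not merely locally free) over the Artinian ring $\sh O(\nc{Y_l} \times_{\bb P^1_l} 0)$ is what makes a basis available in the first place.
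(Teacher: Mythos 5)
Your proposal is correct and matches the paper's route exactly: the paper derives this corollary immediately from the preceding lemma together with the observation that finite locally free modules over an Artinian ring are free, which is precisely the content of your well-definedness and essential-surjectivity steps. The remaining bookkeeping (full faithfulness being tautological since bases are not required to be preserved) is implicit in the paper and handled correctly by you.
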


Next, we want to express the condition ``$T \times_{\bb P^1_l} (\bb P^1_l - S_\infty)$ is \'etale over $Y_l$'' in terms of vector bundles on $\bb P^1_l$.
To this end, we will use the {\em transitivity of the discriminant}.

First, we recall the definitions of the {\em discriminant} and the {\em norm} of a finite locally free morphism $Y \to X$.
Recall that, for a finite locally free morphism $Y \to X$ of schemes, we view $\sh O_Y$ as a (finite locally free) $\sh O_X$-algebra.

\begin{defn}
  Let $f \colon Y \to X$ be a finite locally free morphism of schemes of constant rank, and let $\mu$ be the multiplication map $\sh O_Y \otimes_{\sh O_X} \sh O_Y \to \sh O_Y$.
  The {\em trace form} $\tau_f$ of $f$ is the morphism $\sh O_Y \to \iHom_{\sh O_X}(\sh O_Y, \sh O_X)$ corresponding to the composition $\Tr_f \mu \colon \sh O_Y \otimes_{\sh O_X} \sh O_Y \to \sh O_X$.
  The {\em discriminant} $\Delta_f$ of $f$ is the determinant (over $\sh O_X$) of the trace form $\tau_f$.
\end{defn}

\begin{defn}[{cf.~\cite{ferrand98}}]
  Let $f \colon Y \to X$ be a finite locally free morphism of schemes of constant rank, and let $\sh L$ be a line bundle on $Y$.
  The {\em norm} $\Nm_f \sh L$ of $\sh L$ is the line bundle
  \[
    \iHom_{\sh O_X}(\det_{\sh O_X} f_* \sh O_Y, \det_{\sh O_X} f_* \sh L).
  \]
\end{defn}

Let $f \colon Y \to X$ be a finite locally free morphism of schemes of constant rank, and let $\sh E$ and $\sh F$ be finite locally free $\sh O_Y$-modules of the same constant rank.
By \cite[Eq.~7.1.1]{deligne87} and the fact that norms (of line bundles) commute with tensor products and duals (see \cite[Sec.~6.5]{ega2} and \cite[Prop.~3.3]{ferrand98}), we see that there is a unique isomorphism
\[
  \iHom_{\sh O_X}(\det\nolimits_{\sh O_X} \sh E, \det\nolimits_{\sh O_X} \sh F) = \iHom_{\sh O_X}(\Nm_f \det\nolimits_{\sh O_Y} \sh E, \Nm_f \det\nolimits_{\sh O_Y} \sh F)
\]
satisfying the following properties.
\begin{itemize}
  \item It is compatible with base change by open immersions.
  \item For any isomorphism $\alpha \colon \sh F \to \sh E$, we have induced isomorphisms
    \[
      \iHom_{\sh O_X}(\det\nolimits_{\sh O_X} \sh E, \det\nolimits_{\sh O_X} \sh F) \to \iEnd_{\sh O_X}(\det\nolimits_{\sh O_X} \sh E)
    \]
    and
    \[
      \iHom_{\sh O_X}(\Nm_f \det\nolimits_{\sh O_Y} \sh E, \Nm_f \det\nolimits_{\sh O_Y} \sh F) \to \iEnd_{\sh O_X}(\Nm_f \det\nolimits_{\sh O_Y} \sh E).
    \]
    Therefore they induce isomorphisms
    \[
      \iIsom_{\sh O_X}(\det\nolimits_{\sh O_X} \sh E, \det\nolimits_{\sh O_X} \sh F) \to \iAut_{\sh O_X}(\det\nolimits_{\sh O_X} \sh E) = \bb G_{m,X}
    \]
    and
    \[
      \iIsom_{\sh O_X}(\Nm_f \det\nolimits_{\sh O_Y} \sh E, \Nm_f \det\nolimits_{\sh O_Y} \sh F) \to \iAut_{\sh O_X}(\Nm_f \det\nolimits_{\sh O_Y} \sh E) = \bb G_{m,X}.
    \]
    These isomorphisms are equal under the given identification.
\end{itemize}

Therefore, we have the following.
\begin{cor}
  Let $f \colon Y \to X$ be a finite locally free morphism of schemes of constant rank, and let $\sh E$ be a finite locally free $\sh O_Y$-module of constant rank $r$.
  Then
  \begin{align*}
    \det\nolimits_{\sh O_X} \sh E &= \Nm_f \det\nolimits_{\sh O_Y} \sh E \otimes_{\sh O_X} (\det\nolimits_{\sh O_X} \sh O_Y)^{\otimes r} \\
    \iHom_{\sh O_X}(\det\nolimits_{\sh O_X} \sh E,\sh O_X) &= \Nm_f \det\nolimits_{\sh O_Y} \iHom_{\sh O_Y}(\sh E, \sh O_Y) \otimes_{\sh O_X} \pth*[big]{\iHom_{\sh O_X}(\det\nolimits_{\sh O_X} \sh O_Y, \sh O_X)}^{\otimes r}
  \end{align*}
\end{cor}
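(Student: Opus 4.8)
The plan is to deduce both formulas from the canonical isomorphism
\[
  \iHom_{\sh O_X}(\det\nolimits_{\sh O_X} \sh E, \det\nolimits_{\sh O_X} \sh F) = \iHom_{\sh O_X}(\Nm_f \det\nolimits_{\sh O_Y} \sh E, \Nm_f \det\nolimits_{\sh O_Y} \sh F)
\]
established just above (where, as in the corollary, $\det_{\sh O_X}\sh E$ abbreviates $\det_{\sh O_X}f_*\sh E$), by specialising the second module to the free module $\sh F = \sh O_Y^{\oplus r}$. For line bundles each $\iHom_{\sh O_X}(\sh L,\sh M)$ is canonically $\iHom_{\sh O_X}(\sh L,\sh O_X)\otimes_{\sh O_X}\sh M$, so, writing $(-)^\vee$ for the dual line bundle over the relevant structure sheaf, this identity reads
\[
  (\det\nolimits_{\sh O_X}\sh E)^\vee \otimes_{\sh O_X} \det\nolimits_{\sh O_X}\sh F = (\Nm_f\det\nolimits_{\sh O_Y}\sh E)^\vee \otimes_{\sh O_X} \Nm_f\det\nolimits_{\sh O_Y}\sh F,
\]
and it then suffices to evaluate the two factors depending on $\sh F$.

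For the first formula I would take $\sh F=\sh O_Y^{\oplus r}$. Then $\det_{\sh O_Y}\sh F=\sh O_Y$, whence $\Nm_f\det_{\sh O_Y}\sh F=\Nm_f\sh O_Y=\iEnd_{\sh O_X}(\det_{\sh O_X}\sh O_Y)=\sh O_X$; and since $f_*$ commutes with finite direct sums and $\det$ of a sum is the tensor product of the $\det$'s, $\det_{\sh O_X}\sh F=(\det_{\sh O_X}\sh O_Y)^{\otimes r}$. The specialised isomorphism becomes
\[
  (\det\nolimits_{\sh O_X}\sh E)^\vee \otimes_{\sh O_X} (\det\nolimits_{\sh O_X}\sh O_Y)^{\otimes r} = (\Nm_f\det\nolimits_{\sh O_Y}\sh E)^\vee,
\]
and dualising over $\sh O_X$ and tensoring by $(\det_{\sh O_X}\sh O_Y)^{\otimes r}$ yields exactly the first displayed equation.

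For the second formula I would dualise the first over $\sh O_X$. Using that $\iHom_{\sh O_X}(-,\sh O_X)$ sends tensor products to tensor products of duals, the $\sh O_X$-dual of the first equation is
\[
  \iHom_{\sh O_X}(\det\nolimits_{\sh O_X}\sh E,\sh O_X) = (\Nm_f\det\nolimits_{\sh O_Y}\sh E)^\vee \otimes_{\sh O_X} \bigl(\iHom_{\sh O_X}(\det\nolimits_{\sh O_X}\sh O_Y,\sh O_X)\bigr)^{\otimes r},
\]
whose last factor is already the desired one. It then remains to identify $(\Nm_f\det_{\sh O_Y}\sh E)^\vee$ with $\Nm_f\det_{\sh O_Y}\iHom_{\sh O_Y}(\sh E,\sh O_Y)$, which follows from $\det_{\sh O_Y}\iHom_{\sh O_Y}(\sh E,\sh O_Y)=(\det_{\sh O_Y}\sh E)^\vee$ (determinant of the $\sh O_Y$-dual) together with the fact, recalled above, that $\Nm_f$ commutes with duals.

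The one point that needs care — and the only genuine subtlety — is that the second formula is \emph{not} obtained by substituting $\iHom_{\sh O_Y}(\sh E,\sh O_Y)$ for $\sh E$ in the first: the dual in the first formula is an $\sh O_X$-dual of a pushforward, whereas $\det_{\sh O_X}\iHom_{\sh O_Y}(\sh E,\sh O_Y)$ involves the pushforward of an $\sh O_Y$-dual, and $f_*$ does not commute with dualisation. The correct route is therefore to dualise the (already proven) first formula over $\sh O_X$ and only afterwards move the dual inside $\Nm_f$ and $\det_{\sh O_Y}$, both of which live on $Y$ and do commute with the $\sh O_Y$-dual.
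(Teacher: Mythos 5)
Your proof is correct and follows the route the paper intends: the corollary is stated with only a "Therefore" after the canonical isomorphism $\iHom_{\sh O_X}(\det_{\sh O_X}\sh E,\det_{\sh O_X}\sh F)=\iHom_{\sh O_X}(\Nm_f\det_{\sh O_Y}\sh E,\Nm_f\det_{\sh O_Y}\sh F)$, and specialising $\sh F=\sh O_Y^{\oplus r}$ (legitimate, since it has the same rank $r$ as $\sh E$) and then dualising over $\sh O_X$, using that $\Nm_f$ and $\det_{\sh O_Y}$ commute with $\sh O_Y$-duals, is exactly the intended argument. Your closing remark that the second identity cannot be obtained by substituting $\iHom_{\sh O_Y}(\sh E,\sh O_Y)$ into the first (because $f_*$ does not commute with dualisation) is a correct and worthwhile observation.
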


Using the two identifications above, we may now state the transitivity of the discriminant.
A proof can be found in e.g.~\cite[Sec.~4.1]{lieblich00}.

\begin{thm}[Transitivity of the discriminant]
  Let $f \colon Y \to X$ and $g \colon Z \to Y$ be finite locally free morphisms of schemes of constant rank, and suppose that $g$ has rank $r$.
  Then
  \[
    \Delta_{fg} = \Nm_f \Delta_g \otimes \Delta_f^{\otimes r}.
  \]
\end{thm}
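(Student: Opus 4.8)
The plan is to prove the formula by reducing to an affine, free situation and then invoking the classical tower formula for discriminants, whose on-the-nose (rather than up-to-a-unit) form is precisely what the norm identifications above guarantee. First I would observe that both sides are global sections of one and the same line bundle on $X$: applying the corollary immediately preceding the theorem to $\sh E = g_*\sh O_Z$, which is finite locally free of rank $r$ over $\sh O_Y$, gives $\det_{\sh O_X}(fg)_*\sh O_Z = \Nm_f\det_{\sh O_Y}g_*\sh O_Z \otimes_{\sh O_X}(\det_{\sh O_X}f_*\sh O_Y)^{\otimes r}$, and taking the $(-2)$-nd tensor power identifies the line bundle containing $\Delta_{fg}$ with the one containing $\Nm_f\Delta_g \otimes \Delta_f^{\otimes r}$, so the asserted equality typechecks. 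Since the discriminant, the norm, and all the canonical identifications recorded above commute with base change and are local on $X$ and on $Y$, I would then reduce to the case $X = \Spec A$, $Y = \Spec B$, $Z = \Spec C$ with $B$ free of rank $n$ over $A$ and $C$ free of rank $r$ over $B$.

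In this free situation, choose an $A$-basis $e_1,\dots,e_n$ of $B$ and a $B$-basis $f_1,\dots,f_r$ of $C$, so that $\{e_i f_j\}$ is an $A$-basis of $C$. These bases trivialise every determinant line bundle in sight, and one checks that they are compatible with the canonical isomorphism of that corollary and with the identification with $\bb G_{m,X}$ appearing in the second compatibility bullet before it; under this compatibility the abstract norm $\Nm_f$ of a trivialised line bundle becomes the usual algebra norm on $B$. Hence the equality to be proved becomes the classical identity of elements of $A$, namely $\Delta_{fg} = \Nm_f(\Delta_g)\,\Delta_f^{r}$, where now $\Delta_g \in B$ and $\Delta_f,\Delta_{fg}\in A$ are the determinants of the respective trace-form Gram matrices.

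To prove this identity I would use transitivity of the trace, $\Tr_{fg} = \Tr_f\circ\Tr_g$, to exhibit the Gram matrix of the trace form of $C/A$ in the basis $\{e_i f_j\}$ as the transfer along $\Tr_f\colon B\to A$ of the $B$-valued trace form $q(x,y)=\Tr_g(xy)$ of $C/B$. The determinant of such a transfer is computed in two moves. For the standard form $q_0$ (Gram matrix the identity, i.e. $C=B^{\oplus r}$ with $q_0(x,y)=\sum_j x_j y_j$) the transferred $A$-bilinear form $\Tr_f\circ q_0$ is the orthogonal sum of $r$ copies of $(u,v)\mapsto\Tr_f(uv)$, whose determinant is $\Delta_f^{r}$. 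For a general $q$, write $q(x,y)=q_0(x,\varphi y)$ where $\varphi$ is the $B$-linear endomorphism of $C$ whose matrix is the Gram matrix of $q$; then $\Tr_f\circ q = (\Tr_f\circ q_0)(\,\cdot\,,\varphi\,\cdot\,)$, so multiplicativity of the determinant of bilinear forms under precomposition contributes the factor $\det_A(\varphi)$, and the compatibility of the two determinants recorded in the second bullet identifies $\det_A(\varphi)=\Nm_f(\det_B\varphi)=\Nm_f(\Delta_g)$. Combining the two moves yields $\Delta_{fg}=\Delta_f^{r}\,\Nm_f(\Delta_g)$.

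The main obstacle I expect is the bookkeeping that makes the equality exact rather than merely up to a unit: verifying that the chosen bases are compatible with the canonical norm identifications of the corollary, so that $\Nm_f$ genuinely specialises to the algebra norm and no stray unit survives, and, underlying this, the clean identity $\det_A(\varphi)=\Nm_f(\det_B\varphi)$ for a $B$-linear endomorphism $\varphi$ — which is exactly the determinant-versus-norm compatibility encoded in the bullet points above. Once that identification is in hand, the remaining determinant computation is routine.
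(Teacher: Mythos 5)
Your proposal is correct in outline, but note that the paper does not actually prove this theorem: it simply defers to \cite[Sec.~4.1]{lieblich00}, so there is no internal argument to compare against, and a self-contained proof is genuinely added value. Your route is the classical one and it works: the corollary preceding the theorem, applied to $\sh E = g_*\sh O_Z$, shows that both sides are sections of the same line bundle on $X$; one reduces to the affine free case; and there the identity follows from transitivity of the trace together with your two-step computation of the determinant of the transferred bilinear form (the block-diagonal computation giving $\Delta_f^{\otimes r}$, and the precomposition by $\varphi$ giving $\Nm_f\Delta_g$). Two points deserve more care than your sketch gives them. First, the reduction to the free case should be carried out by localising at points of $X$ only, not ``on $X$ and on $Y$'': after localising at $x \in X$ the ring $B$ is semilocal, and a finite projective $B$-module of constant rank over a semilocal ring is free, which is what makes $C$ free over $B$; localising on $Y$ independently is not available, since the norm and the pushforward are taken over $X$. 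Second, the determinant-versus-norm compatibility $\det_A(\varphi) = \Nm_f(\det_B\varphi)$ is recorded in the paper only for isomorphisms $\alpha$, whereas your $\varphi$ is the Gram matrix of the trace form of $C/B$, which fails to be invertible whenever $g$ is not \'etale; you must extend the identity to arbitrary $B$-linear endomorphisms, which is standard (it is a polynomial identity in the matrix entries, so it suffices to verify it for the generic matrix, which is invertible over the fraction field of $\bb Z[\text{entries}]$). With these two points supplied, your bookkeeping closes and the argument is complete.
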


\begin{cor} \label{d:etale-discriminant}
  Let $f \colon Y \to X$ and $g \colon Z \to Y$ be finite locally free morphisms of schemes of constant rank, and suppose that $g$ has rank $r$.
  Then $g$ is \'etale if and only if we have $\det_{\sh O_X} \sh O_Z \iso (\det_{\sh O_X} \sh O_Y)^{\otimes r}$ and $\Delta_{fg}$ and $\Delta_f^{\otimes r}$ differ by a unit.
\end{cor}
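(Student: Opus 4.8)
The plan is to reduce everything to the single clean criterion that a finite locally free morphism of constant rank is étale if and only if its discriminant is a unit, and then to transport this criterion from $g$ down to $X$ using the transitivity formula. First I would recall the standard fact: for a finite locally free morphism of constant rank flatness is automatic, so the morphism is étale if and only if it is unramified, which holds if and only if the trace form is a perfect pairing, i.e.~if and only if the discriminant $\Delta_g$ --- viewed as the induced morphism $(\det_{\sh O_Y} g_*\sh O_Z)^{\otimes 2}\to \sh O_Y$ --- is an isomorphism. So the whole statement becomes the assertion that $\Delta_g$ is an isomorphism if and only if $\det_{\sh O_X}\sh O_Z\iso(\det_{\sh O_X}\sh O_Y)^{\otimes r}$ and $\Delta_{fg}$, $\Delta_f^{\otimes r}$ differ by a unit.

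Next I would invoke the transitivity of the discriminant, $\Delta_{fg}=\Nm_f\Delta_g\otimes\Delta_f^{\otimes r}$, which says immediately that the quotient of $\Delta_{fg}$ by $\Delta_f^{\otimes r}$ is exactly $\Nm_f\Delta_g$. The one subtlety is that $\Delta_{fg}$ and $\Delta_f^{\otimes r}$ are sections of the line bundles $(\det_{\sh O_X}\sh O_Z)^{\otimes-2}$ and $(\det_{\sh O_X}\sh O_Y)^{\otimes-2r}$ respectively, so the phrase ``differ by a unit'' only makes literal sense once these two bundles are identified. This is precisely where the determinant condition enters: by the corollary preceding the statement, $\det_{\sh O_X}\sh O_Z=\Nm_f\det_{\sh O_Y}\sh O_Z\otimes(\det_{\sh O_X}\sh O_Y)^{\otimes r}$, so $\det_{\sh O_X}\sh O_Z\iso(\det_{\sh O_X}\sh O_Y)^{\otimes r}$ is equivalent to $\Nm_f\det_{\sh O_Y}\sh O_Z\iso\sh O_X$, which is exactly the condition making the two ambient line bundles canonically isomorphic. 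Under this identification $\Nm_f\Delta_g$ becomes an honest global function on $X$, and ``$\Delta_{fg}$ and $\Delta_f^{\otimes r}$ differ by a unit'' reads as $\Nm_f\Delta_g\in\sh O_X^\times$.

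The crux is then a norm lemma: for $f$ finite locally free of constant rank (hence surjective) and a local section $a$ of $\sh O_Y$, $\Nm_f a$ is a unit if and only if $a$ is. The nontrivial direction is that $\Nm_f a\in\sh O_X^\times$ forces $a\in\sh O_Y^\times$; I would prove this by noting that $\Nm_f a=\det(m_a)$, where $m_a$ is multiplication by $a$, an $\sh O_Y$-linear (in particular $\sh O_X$-linear) endomorphism of the locally free $\sh O_X$-module $f_*\sh O_Y$, so $\det(m_a)$ invertible makes $m_a$ an isomorphism, whose inverse is again $\sh O_Y$-linear and sends $1$ to $a^{-1}$. Applying this to $a=\Delta_g$, read locally through a trivialisation of $\det_{\sh O_Y}g_*\sh O_Z$ (harmless, since being a unit is local and the norm is compatible with restriction to opens), gives $\Nm_f\Delta_g\in\sh O_X^\times$ if and only if $\Delta_g$ is an isomorphism. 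Combining the three steps yields the equivalence in both directions.

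The main obstacle I anticipate is the bookkeeping in the middle step rather than any single deep fact: one must check that the determinant condition is genuinely the right hypothesis to identify the two line bundles, that ``differ by a unit'' then coincides with $\Nm_f\Delta_g$ being invertible, and that the norm lemma is applied to the discriminant consistently across the local trivialisations. The étaleness-equals-unit-discriminant fact and the norm lemma are each standard; the real work is in threading the transitivity formula and the determinant corollary together so that the two displayed conditions jointly encode precisely the invertibility of $\Delta_g$.
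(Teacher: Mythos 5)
Your skeleton is exactly the intended one: the paper gives no proof of this corollary, treating it as immediate from the transitivity theorem and the preceding determinant identity, and your ingredients (\'etale $\Leftrightarrow$ $\Delta_g$ invertible; transitivity; the identification $\det_{\sh O_X}\sh O_Z = \Nm_f\det_{\sh O_Y}g_*\sh O_Z\otimes(\det_{\sh O_X}\sh O_Y)^{\otimes r}$; the norm lemma via $\Nm_f a=\det(m_a)$ and the adjugate) are the right ones. However, the closing claim that combining the three steps ``yields the equivalence in both directions'' hides two genuine gaps --- and read literally for arbitrary schemes the corollary fails in both directions without further hypotheses, so no argument can close them as stated. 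Forward direction: \'etaleness of $g$ only gives that $\Delta_g\colon(\det_{\sh O_Y}g_*\sh O_Z)^{\otimes 2}\to\sh O_Y$ is an isomorphism, hence that $(\Nm_f\det_{\sh O_Y}g_*\sh O_Z)^{\otimes 2}$ is trivial; your argument needs $\Nm_f\det_{\sh O_Y}g_*\sh O_Z$ itself to be trivial, which does not follow when $\mathrm{Pic}(X)$ has $2$-torsion. (Take $f=\id$, $X$ an elliptic curve, $Z\to X$ the \'etale double cover attached to a nontrivial $2$-torsion line bundle $\sh L$, so $\det_{\sh O_X}\sh O_Z\iso\sh L\not\iso\sh O_X=(\det_{\sh O_X}\sh O_Y)^{\otimes 2}$.) Backward direction: to pass from ``$\Delta_{fg}=u\,\Delta_f^{\otimes r}$ with $u$ a unit'' and $\Delta_{fg}=\Nm_f\Delta_g\otimes\Delta_f^{\otimes r}$ to ``$\Nm_f\Delta_g=u$'', you must cancel $\Delta_f^{\otimes r}$, which requires $\Delta_f$ to be a nonzerodivisor. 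If $f$ is generically inseparable or non-reduced (e.g.\ $X=\Spec k$, $Y=\Spec k[\epsilon]/(\epsilon^2)$, $Z=\Spec k[\epsilon,\delta]/(\epsilon^2,\delta^2)$) then $\Delta_f=0$, both displayed conditions hold vacuously, and $g$ is not \'etale.

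Both defects disappear under the hypotheses implicitly in force where the corollary is applied: $X=\bb P^1_l$, so $\mathrm{Pic}(X)\iso\bb Z$ is torsion-free and a $2$-torsion class is trivial, and $f$ generically \'etale with $X$ integral, so $\Delta_f$ is a nonzerodivisor (this is precisely what \autoref{d:reduction-separator} arranges). So your write-up is the right proof of a statement that needs these extra hypotheses made explicit; you should add them and point to the exact step where each is used. The remainder of your argument --- the reduction of the determinant condition to triviality of $\Nm_f\det_{\sh O_Y}g_*\sh O_Z$, the observation that ``differ by a unit'' is insensitive to the choice of trivialisation, and the norm lemma --- is correct as written.
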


Therefore we have the following.

\begin{prop}
  Let $l$ be a perfect extension of $k$.
  Then the category of finite \'etale $\nc{Y_l}$-schemes is equivalent to the full subcategory of that of finite locally free $\nc{Y_l}$-schemes $T$ of constant rank (say $r$) such that $\det_{\sh O_{\bb P^1_l}} \sh O_T \iso (\det_{\sh O_{\bb P^1_l}} \sh O_{\nc{Y_l}})^{\otimes r}$ and $\Delta_{T/\bb P^1_l}$ and $\Delta_{\nc{Y_l}/\bb P^1_l}^{\otimes r}$ differ by a unit.
\end{prop}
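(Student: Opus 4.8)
The plan is to read this off directly from \autoref{d:etale-discriminant}, applied with $X = \bb P^1_l$, $Y = \nc{Y_l}$, and $Z = T$, so that $f$ is the structure morphism $\nc{Y_l} \to \bb P^1_l$ and $g$ is the given morphism $T \to \nc{Y_l}$. The only genuine prerequisite is that both $f$ and the composite $fg \colon T \to \bb P^1_l$ are finite locally free of constant rank; once this is checked, the corollary translates ``$g$ is \'etale'' verbatim into the two stated conditions, since then $\Delta_{fg} = \Delta_{T/\bb P^1_l}$, $\Delta_f = \Delta_{\nc{Y_l}/\bb P^1_l}$, and the determinant condition $\det_{\sh O_{\bb P^1_l}} \sh O_Z \iso (\det_{\sh O_{\bb P^1_l}} \sh O_Y)^{\otimes r}$ of the corollary is exactly the one appearing in the statement.

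First I would verify that $f \colon \nc{Y_l} \to \bb P^1_l$ is finite locally free of constant rank. In \autoref{s:groupoid} the scheme $\nc{Y_l}$ is a smooth (hence Cohen--Macaulay) proper curve over $l$, and the morphism to $\bb P^1_l$ is finite: it factors as $\nc{Y_l} \to \nc{X_l} \to \bb P^1_l$, where $\nc{X_l} \to \bb P^1_l$ is finite locally free by hypothesis and $\nc{Y_l} \to \nc{X_l}$ is finite as a normalisation in a finite extension. Since $\bb P^1_l$ is regular and $\nc{Y_l}$ is Cohen--Macaulay, miracle flatness shows $f$ is flat, hence finite locally free, and the rank is constant because $\bb P^1_l$ is connected. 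For an object $T$ of the subcategory, $g \colon T \to \nc{Y_l}$ is finite locally free of constant rank $r$ by hypothesis, so $fg$ is a composite of finite locally free morphisms and therefore finite locally free, of constant rank $r \cdot \deg f$.

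With these hypotheses in place, \autoref{d:etale-discriminant} says that for such a $T$ the morphism $g$ is \'etale if and only if $\det_{\sh O_{\bb P^1_l}} \sh O_T \iso (\det_{\sh O_{\bb P^1_l}} \sh O_{\nc{Y_l}})^{\otimes r}$ and $\Delta_{T/\bb P^1_l}$ and $\Delta_{\nc{Y_l}/\bb P^1_l}^{\otimes r}$ differ by a unit. A finite locally free morphism that is \'etale is finite \'etale, so the objects of the subcategory are precisely the finite \'etale $\nc{Y_l}$-schemes of constant rank. As the morphisms on both sides are just morphisms of $\nc{Y_l}$-schemes, the inclusion of the subcategory into the finite \'etale $\nc{Y_l}$-schemes is fully faithful, and by the corollary essentially surjective, giving the equivalence.

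I do not expect a real obstacle: the entire content is carried by the transitivity of the discriminant packaged in \autoref{d:etale-discriminant}, and the proof reduces to checking the finite-locally-free-of-constant-rank hypotheses and reading off the dictionary of conditions. The one point worth flagging is the constant-rank bookkeeping: since $\nc{Y_l}$ need not be connected (as $\nc{Y}$ was not assumed geometrically connected), \'etaleness is detected rank by rank, and one should note that the relevant objects here---being built from $G$-torsors of constant rank $|G|$---are of constant rank, which is exactly the ``of constant rank'' clause in the statement.
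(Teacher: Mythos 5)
Your proposal is correct and matches the paper's approach: the paper states this proposition with no separate proof, presenting it as an immediate consequence of \autoref{d:etale-discriminant} applied to $T \to \nc{Y_l} \to \bb P^1_l$, exactly as you do. Your additional verification of the finite-locally-free hypotheses and the remark on constant rank are sensible elaborations of what the paper leaves implicit.
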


Note that the condition on the determinants is simply a condition on the types of the standard modules over $l$ isomorphic to $\sh O_T$ and $\sh O_{\nc{Y_l}}$, so if $S_\infty = \emptyset$, this gives an expression of the desired form.
If $S_\infty = \infty$, then we use the following instead.

\begin{prop}
  Let $l$ be a perfect extension of $k$, and assume that $S_\infty = \infty$.
  Then the category of finite locally free $\nc{Y_l}$-schemes \'etale over $Y_l$ is equivalent to the full subcategory of that of finite locally free $\nc{Y_l}$-schemes $T$ of constant rank (say $r$) such that $\Delta_{T/\bb P^1_l}$ and $\Delta_{\nc{Y_l}/\bb P^1_l}^{\otimes r}$ differ by a unit times a power of $y$.
\end{prop}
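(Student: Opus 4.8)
The plan is to rerun the argument for the case $S_\infty = \emptyset$, but localised to the affine chart $\bb A^1_l = \bb P^1_l - \infty$, i.e.~the locus $y \neq 0$. As before, the comparison functor is the evident inclusion of full subcategories, so (working inside finite locally free $\nc{Y_l}$-schemes of constant rank) it suffices to characterise objects: for $T$ a finite locally free $\nc{Y_l}$-scheme of constant rank $r$, I would show that its restriction over the open subscheme $Y_l \subseteq \nc{Y_l}$ is étale over $Y_l$ if and only if $\Delta_{T/\bb P^1_l}$ and $\Delta_{\nc{Y_l}/\bb P^1_l}^{\otimes r}$ differ by a unit times a power of $y$. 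Since $Y_l = \nc{Y_l} \times_{\bb P^1_l} \bb A^1_l$, this restriction is the base change $T \times_{\bb P^1_l} \bb A^1_l$, and both it and $Y_l \to \bb A^1_l$ are finite locally free of constant rank, being base changes of $T \to \nc{Y_l}$ and $\nc{Y_l} \to \bb P^1_l$.

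Next I would apply \autoref{d:etale-discriminant} over the base $\bb A^1_l$: the cover $T \times_{\bb P^1_l} \bb A^1_l \to Y_l$ is étale if and only if $\det\nolimits_{\sh O_{\bb A^1_l}} \sh O_T \iso (\det\nolimits_{\sh O_{\bb A^1_l}} \sh O_{\nc{Y_l}})^{\otimes r}$ and $\Delta_{T/\bb A^1_l}$ and $\Delta_{\nc{Y_l}/\bb A^1_l}^{\otimes r}$ differ by a unit of $\sh O(\bb A^1_l)$. The first condition is now automatic, since every line bundle on $\bb A^1_l$ is trivial; this is exactly why, unlike in the case $S_\infty = \emptyset$, no condition on determinants survives in the statement.

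It then remains to translate the surviving discriminant condition back to $\bb P^1_l$. As the trace form and its determinant are built from $\sh O$-linear operations, the discriminant commutes with the base change along the open immersion $\bb A^1_l \to \bb P^1_l$, so $\Delta_{T/\bb A^1_l}$ and $\Delta_{\nc{Y_l}/\bb A^1_l}^{\otimes r}$ are the restrictions of $\Delta_{T/\bb P^1_l}$ and $\Delta_{\nc{Y_l}/\bb P^1_l}^{\otimes r}$, which are sections of $\sh O_{\bb P^1_l}(d_1)$ and $\sh O_{\bb P^1_l}(d_2)$ for suitable degrees $d_1, d_2$. Trivialising $\sh O_{\bb P^1_l}(d)$ over the chart $y \neq 0$ by dividing by $y^d$ identifies its global sections with homogeneous forms divided by $y^d$; under this identification two such sections restrict to functions on $\bb A^1_l$ differing by a unit of $\sh O(\bb A^1_l) = l[x/y]$ --- necessarily an element of $l^\times$ --- exactly when the sections themselves differ by a constant times $y^{d_1 - d_2}$, that is, by a unit times a power of $y$. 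Assembling the three reductions then yields the claim.

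The step I expect to be most delicate is this last translation, where one must keep careful track of the degrees $d_1, d_2$ of the two line bundles (equivalently, of the types of the standard modules $\sh O_T$ and $\sh O_{\nc{Y_l}}$) and of the sign of the resulting power of $y$. It is reassuring here that, by the transitivity of the discriminant, the ratio of the two discriminants equals $\Nm_f \Delta_{T/\nc{Y_l}}$, whose vanishing locus on $\bb P^1_l$ is the image of the ramification of $T \to \nc{Y_l}$; for a $T$ that is étale over $Y_l$ this is concentrated over $\infty$, so the ratio is indeed a constant times a nonnegative power of $y$.
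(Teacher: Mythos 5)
Your proposal is correct and follows essentially the same route as the paper: both reduce to applying the discriminant criterion (\autoref{d:etale-discriminant}) over $\bb A^1_l$ and then to the observation that a map of line bundles $\sh O_{\bb P^1_l}(b) \to \sh O_{\bb P^1_l}(a)$ restricts to an isomorphism over $\bb A^1_l$ exactly when it is multiplication by $s y^{a-b}$ with $s \in l^\times$, which is your trivialise-by-$y^d$ computation. The paper states only this last lemma and proves it by noting that the induced map $l[x] \to l[x]$ is multiplication by $f(x,1)$, which is invertible iff $f(x,1) \in l^\times$; your extra remarks (triviality of $\Pic(\bb A^1_l)$ explaining the disappearance of the determinant condition, and the sign of the power of $y$ via transitivity of the discriminant) are accurate but not needed beyond that.
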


\begin{proof}
  It suffices to show that for integers $a,b$, a map $\phi \colon \sh O_{\bb P^1_l}(b) \to \sh O_{\bb P^1_k}(a)$ is an isomorphism when restricted to $\bb A^1_l$ if and only if it is given by multiplication by $s y^{a-b}$ with $s \in l^\times$.
  Since $y$ becomes invertible after restricting to $\bb A^1_l$, it follows that if $\phi$ is multiplication by $s y^{a-b}$, then $\phi|_{\bb A^1_l}$ is an isomorphism.
  Conversely, $\phi$ is multiplication by some $f \in l[x,y]_{a-b}$, which after restriction becomes the multiplication by $f(x,1)$ map $l[x] \to l[x]$.
  Since this map is an isomorphism, $f(x,1)$ must be an invertible constant in $l[x]$, i.e.~$f = s y^{a-b}$ for some $s \in l^{\times}$.
\end{proof}

We are almost ready to express the condition ``$T \times_{\bb P^1_l} (\bb P^1_l - S_\infty)$ is a $G$-torsor on $(Y_l)_{\et}$'' in terms of vector bundles on $\bb P^1_l$.

\begin{lem} \label{d:torsor-fibres}
  Let $f \colon Y \to X$ be a morphism of schemes, and let $G$ be a finite group acting on $Y/X$.
  Then $Y$ is a $G$-torsor on $X$ if and only if $f$ is flat, surjective, locally of finite presentation, and $G$ acts freely and transitively on geometric fibres.
\end{lem}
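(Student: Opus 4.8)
The plan is to match the two clauses in the definition of a $G$-torsor with the two stated properties. Recall that $Y$ is a $G$-torsor on $X$ (in the fppf topology) exactly when the action map
\[
  a \colon G \times_X Y \to Y \times_X Y, \qquad (g,y) \mapsto (y,gy)
\]
is an isomorphism and $Y$ fppf-locally admits a section. For the forward direction, a $G$-torsor is fppf-locally isomorphic to $G_{X'} = \coprod_{g \in G} X'$ over some fppf cover $X' \to X$; since being flat, surjective and locally of finite presentation descends along fppf covers, $f$ has all three properties, and restricting the isomorphism $a$ to the geometric fibres of $f$ shows that $G$ acts simply transitively on each of them. So it remains to prove the converse: assuming $f$ is flat, surjective and lfp and that $G$ acts freely and transitively on the geometric fibres, I would deduce that $a$ is an isomorphism and that $Y$ has a section fppf-locally.

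The key step is the isomorphy of $a$, which I would check fibrewise over $X$. Both the source and the target are flat and locally of finite presentation over $X$: the source equals $\coprod_{g \in G} Y$, which is flat lfp over $X$ because $f$ is, while $Y \times_X Y \to X$ is the composite of $\mathrm{pr}_1$ (a base change of $f$) with $f$. Since $a$ is a morphism of $X$-schemes, its fibre over a geometric point $\overline x \to X$ is
\[
  a_{\overline x} \colon G \times Y_{\overline x} \to Y_{\overline x} \times Y_{\overline x}, \qquad (g,y) \mapsto (y,gy),
\]
which is an isomorphism precisely when $Y_{\overline x}$ is a $G$-torsor over $\kappa(\overline x)$, i.e.\ when $G$ acts freely and transitively on $Y_{\overline x}$. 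By the fibrewise criterion for isomorphisms between flat, finitely presented $X$-schemes (EGA IV, 17.9.5; see also \cite{stacksproject}), $a$ is then an isomorphism. As $f$ is also faithfully flat and lfp, $Y \to X$ is itself an fppf cover, over which $Y$ carries the tautological section $\mathrm{id}_Y$; hence $Y$ has a section fppf-locally and is a $G$-torsor.

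The one point that needs care --- and the main conceptual obstacle rather than a technical one --- is that ``freely and transitively on the geometric fibres'' must be read scheme-theoretically, as the statement that each $a_{\overline x}$ is an isomorphism (equivalently $Y_{\overline x} \cong G$ as a $\kappa(\overline x)$-scheme). Demanding only a free and transitive action on the sets of points is genuinely weaker: a fibre that is a disjoint union of $|G|$ fat points permuted regularly by $G$ would satisfy the set-theoretic condition yet violate isomorphy of $a_{\overline x}$, and flatness together with finite presentation does not exclude such fibres. Over the algebraically closed field $\kappa(\overline x)$ a nonempty $G$-torsor has a rational point and is therefore trivial, so the two readings do agree once the fibres are reduced (in particular finite \'etale), which is the only case arising in the intended application; but for the general statement the scheme-theoretic reading is the correct one, and the argument above uses exactly that reading when identifying $a_{\overline x}$ with the simple-transitivity map.
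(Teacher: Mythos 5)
Your proof is correct, and the converse direction runs along a genuinely different route from the paper's. The paper first upgrades $f$ to a finite \'etale morphism: each geometric fibre is the trivial $G$-torsor, hence \'etale, so $f$ is \'etale by fpqc-locality of that property together with flatness and finite presentation, and then finite; after that it compares the ranks of the two finite \'etale $Y$-schemes $G \times Y$ and $Y \times_X Y$ to conclude that the (surjective) action map is an isomorphism, and finally pulls back along $f$ itself to get a section. You instead apply the fibrewise isomorphism criterion (EGA IV 17.9.5) directly to the action map $a$ over $X$, using only flatness and finite presentation of the source $\coprod_{g \in G} Y$, and then also use $Y \to X$ as the trivialising fppf cover. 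Your route is slightly more economical in that it never needs to establish finiteness or separatedness of $f$ beforehand (the paper's rank-comparison step quietly relies on these); the small price is that you land a priori on an fppf torsor rather than the finite \'etale object the paper wants, though the gap closes in one line: $Y \times_X Y \cong G \times_X Y$ is finite \'etale over $Y$, so $f$ is finite \'etale by fppf descent along itself. Your closing remark about the scheme-theoretic versus set-theoretic reading of ``acts freely and transitively on geometric fibres'' is well taken and not pedantic: the set-theoretic reading makes the statement false (a disjoint union of $\#G$ fat points permuted regularly is a counterexample), and the paper's own proof silently adopts the scheme-theoretic reading when it asserts that each geometric fibre ``is the trivial $G$-torsor.''
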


\begin{proof}
  The necessity of the condition is clear.
  Hence suppose that $f$ is flat, surjective, locally of finite presentation, and $G$ acts freely and transitively on geometric fibres.
  Then for any geometric point $\gpt x$ of $S$, $Y_{\gpt x}$ is the trivial $G$-torsor, hence \'etale.
  As the property of being \'etale is fpqc local on the base, it follows that all fibres of $f$ are \'etale, and since $f$ is flat and locally of finite presentation, it follows that $f$ is finite \'etale.

  Now consider the morphism $\phi \colon G \times Y \to Y \times_X Y$ of finite \'etale $Y$-schemes given on the functor of points by $(g,y) \mapsto (gy,y)$, where the occurring schemes are viewed as $Y$-schemes via the projection on the second coordinate.
  Then $\phi$ is itself finite \'etale surjective, and as $G \times Y$ and $Y \times_X Y$ have the same rank over $Y$, it follows that $\phi$ is an isomorphism.
  After base change with itself, it admits a section, so as $f$ is finite \'etale, it also follows that $Y$ is a $G$-torsor, as desired.
\end{proof}

\begin{lem} \label{d:torsor-locus}
  Let $f \colon Y \to X$ be a finite \'etale morphism of schemes of constant rank $n$, and let $G$ be a finite group of order $n$ acting on $Y/X$.
  Then the locus in $X$ where $f$ is a $G$-torsor is open and closed in $X$.
\end{lem}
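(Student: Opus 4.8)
The plan is to exhibit the torsor locus as the set of points at which a single morphism of finite \'etale $X$-schemes is fibrewise bijective, and then to read off openness from \'etaleness and closedness from finiteness.

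First I would reduce to a fibrewise condition. Since $f$ is finite \'etale of rank $n \geq 1$ (a finite group has order at least one), it is automatically flat, surjective, and locally of finite presentation, so by \autoref{d:torsor-fibres} the locus $W$ where $f$ is a $G$-torsor is exactly the set of $x \in X$ at which $G$ acts freely and transitively on the geometric fibre $Y_{\bar x}$. As the order of $G$ equals $n$, which is the cardinality of $Y_{\bar x}$, a free action on $Y_{\bar x}$ is automatically transitive and conversely, so the condition is equivalent to $G$ acting simply transitively on $Y_{\bar x}$. Introducing the $X$-morphism $\phi \colon G \times Y \to Y \times_X Y$, $(g,y) \mapsto (gy,y)$ used in the proof of \autoref{d:torsor-fibres}, simple transitivity on $Y_{\bar x}$ is precisely the bijectivity of the fibre $\phi_{\bar x}$. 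Thus $W = \{x \in X : \phi_{\bar x} \text{ is bijective}\}$.

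Next I would record that $\phi$ is finite \'etale. Both $G \times Y$ and $Y \times_X Y$ are finite \'etale over $X$ of rank $n^2$, and $\phi$ is a morphism between them over $X$; since a morphism between two schemes \'etale over a common base is \'etale (by the cancellation property of \'etale morphisms, using that $Y \times_X Y \to X$ is unramified), $\phi$ is \'etale, and it is finite because $G \times Y \to X$ is finite and $Y \times_X Y \to X$ is separated. As $\phi$ is \'etale it is open, and as it is finite it is closed, so the image $\phi(G \times Y)$ is open and closed in $Y \times_X Y$; hence the complement $C := (Y \times_X Y) \setminus \phi(G \times Y)$ is open and closed, and in particular finite \'etale over $X$. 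Because $\phi_{\bar x}$ is a map between two sets of the same cardinality $n^2$, it is bijective if and only if it is surjective, if and only if $C_{\bar x} = \emptyset$; therefore $W = X \setminus (\text{image of } C \to X)$. Since $C \to X$ is finite \'etale, its image is open (\'etale, hence open) and closed (finite, hence closed), so $W$ is open and closed. Finally, because $W$ is open and closed, $f|_W$ is again finite \'etale of rank $n$ with a simply transitive fibrewise $G$-action, hence a $G$-torsor by \autoref{d:torsor-fibres}, confirming that $W$ is indeed the torsor locus.

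The main obstacle is the disciplined use of both halves of ``open and closed'': openness must come from the \'etale (equivalently, flat and locally finitely presented) part of the morphisms involved, while closedness must come from finiteness, and it is essential that all the auxiliary schemes $G \times Y$, $Y \times_X Y$, and $C$ are finite \'etale over $X$ so that ``empty geometric fibre over $x$'' translates into ``$x$ lies outside an open and closed image''. The one genuinely non-formal ingredient is that $\phi$, being a morphism of $X$-schemes each of which is \'etale over $X$, is itself \'etale; everything else is the reduction via \autoref{d:torsor-fibres} together with the cardinality count that turns bijectivity into surjectivity.
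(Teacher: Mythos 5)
Your proof is correct and follows essentially the same route as the paper: both arguments hinge on the morphism $\phi \colon G \times Y \to Y \times_X Y$, $(g,y) \mapsto (gy,y)$, being a morphism of finite \'etale $X$-schemes, and then extract the torsor locus as the image (or complement of the image) of an open and closed subset of $Y \times_X Y$. The only cosmetic difference is that the paper takes the image in $X$ of the locus where $\phi$ has rank $1$, whereas you take the complement of the image of the complement of $\phi(G\times Y)$, using the cardinality count $n^2 = n^2$ to turn surjectivity of $\phi_{\bar x}$ into bijectivity.
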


\begin{proof}
  Consider the locus $U$ in $Y \times_X Y$ on which the morphism $G \times Y \to Y \times_X Y$ given on the functor of points by $(g,y) \mapsto (gy,y)$ is an isomorphism (i.e.~where the rank is equal to 1).
  It is an open and closed subset of $Y \times_X Y$ as this morphism is finite \'etale.
  As the rank of $f$ is equal to $n$, the $X$-locus where the same morphism is an isomorphism is the image of $U$ in $X$, and hence is open and closed as well.
  This locus equals the $X$-locus where $f$ is a $G$-torsor, as desired.
\end{proof}

Therefore we have the following.

\begin{cor}
  Let $l$ be a perfect extension of $k$.
  Then the category of finite locally free $G$-equivariant $\nc{Y_l}$-schemes $T$ such that $T \times_{\bb P^1_l} (\bb P^1_l - S_\infty)$ is a $G$-torsor on $(Y_l)_{\et}$ is equivalent to the category of finite locally free $G$-equivariant $\nc{Y_l}$-scheme $T$ such that $T \times_{\bb P^1_l} (\bb P^1_l - S_\infty)$ is \'etale, and such that $T \times_{\bb P^1_l} 0$ is a $G$-torsor on $(\nc{Y_l} \times_{\bb P^1_l} 0)_{\et}$.
\end{cor}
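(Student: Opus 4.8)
The plan is to notice that both categories in the statement are full subcategories of the single ambient category of finite locally free $G$-equivariant $\nc{Y_l}$-schemes $T$, with exactly the same morphisms; so the asserted equivalence reduces to checking that, for a fixed such $T$, the two conditions on $T$ are equivalent. Throughout I would write $T_Y = T \times_{\bb P^1_l}(\bb P^1_l - S_\infty)$, and record once and for all that since $S_\infty \in \set*{\infty,\emptyset}$ the point $0$ lies in $\bb P^1_l - S_\infty$, whence $\nc{Y_l}\times_{\bb P^1_l}0 = Y_l\times_{\bb P^1_l}0$ and $T\times_{\bb P^1_l}0 = T_Y\times_{Y_l}(Y_l\times_{\bb P^1_l}0)$.

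For the first implication I would simply restrict. If $T_Y$ is a $G$-torsor on $(Y_l)_{\et}$ then it is in particular finite \'etale over $Y_l$, so the \'etaleness condition of the second category holds; and base changing the torsor along the fibre inclusion $Y_l\times_{\bb P^1_l}0 \hookrightarrow Y_l$ shows that $T\times_{\bb P^1_l}0$ is a $G$-torsor on $(\nc{Y_l}\times_{\bb P^1_l}0)_{\et}$, which is the remaining condition.

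The substance lies in the converse. Assume $T_Y$ is \'etale over $Y_l$ and that $T\times_{\bb P^1_l}0$ is a $G$-torsor on $(\nc{Y_l}\times_{\bb P^1_l}0)_{\et}$. Since a $G$-torsor is finite \'etale of rank $n := \#G$, the fibre $T\times_{\bb P^1_l}0$ is finite locally free of constant rank $n$ over $\nc{Y_l}\times_{\bb P^1_l}0$; the earlier lemma propagating constant rank from the fibre over $0$ then shows that $T$ is finite locally free over $\nc{Y_l}$ of constant rank $n$. Restricting to the open $Y_l$, the scheme $T_Y$ is finite locally free of constant rank $n$ over $Y_l$, and being \'etale it is finite \'etale of constant rank $n = \#G$. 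By \autoref{d:torsor-locus} the locus in $Y_l$ over which $T_Y$ is a $G$-torsor is therefore open and closed, and by hypothesis it contains the entire fibre $Y_l\times_{\bb P^1_l}0$.

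It then remains to see that this open and closed locus is all of $Y_l$, and this is where I expect the real work to sit (exactly as in the one-line proof of the constant-rank lemma above). The key point is that every connected component of $Y_l$ meets the fibre over $0$: since $\nc{Y_l}\to\bb P^1_l$ is finite locally free, it is both open and closed, so every nonempty connected component of $\nc{Y_l}$ surjects onto the connected base $\bb P^1_l$ and in particular meets the fibre over $0$; and since $\nc{Y_l}$ is smooth over $l$, its connected components are integral curves, so deleting the finite fibre over $S_\infty$ leaves each of them connected, whence the components of $Y_l$ correspond to those of $\nc{Y_l}$ and each meets $Y_l\times_{\bb P^1_l}0$. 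The clopen torsor-locus thus contains a point of every component of $Y_l$ and so equals $Y_l$, giving that $T_Y$ is a $G$-torsor on $(Y_l)_{\et}$ and completing the converse.
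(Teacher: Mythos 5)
Your proof is correct and matches the argument the paper intends: the paper states this corollary without proof as an immediate consequence of \autoref{d:torsor-fibres} and \autoref{d:torsor-locus}, and your reduction to an equality of full subcategories, the propagation of constant rank from the fibre over $0$, and the clopen torsor-locus argument combined with the observation that every connected component of $Y_l$ meets the fibre over $0$ is exactly the intended chain of reasoning. The only addition is that you spell out the component-counting step (irreducibility of the components of the smooth completion), which the paper leaves implicit.
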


Since in the description of finite locally free $\nc{Y_l}$-schemes $T$, an $\sh O(\nc{Y_l} \times_{\bb P^1_l} 0)$-basis for $\sh O(T \times_{\bb P^1_l} 0)$ occurred, in terms of which we can express the condition that $T \times_{\bb P^1_l} 0$ is a $G$-torsor on $(\nc{Y_l} \times_{\bb P^1_l} 0)_{\et}$.

\subsection{Smoothness at $\infty$} \label{h:groupoid-smooth}

Finally, we consider the condition ``$T$ is smooth over $l$''.
If $S_\infty = \emptyset$, then this follows automatically from $T$ having to be \'etale over $\nc{Y_l}$, so assume that $S_{\infty} = \infty$.
As $T \times_{\bb P^1_l} \bb A^1_l$ has to be \'etale over $Y_l$, it suffices to consider the condition ``$T$ is smooth over $l$ at $T \times_{\bb P^1_l} \infty$''.

To this end, assume that we have a scheme $S$, a positive integer $r$, and the structure of an algebra $\sh A$ on $\sh O_S^r$, given by, for the standard basis $e_1,\dotsc,e_r$ on $\sh O_S^r$, $e_ie_{i'} = \sum_j \mu_{jii'} e_j$ and $1 = \sum_j \epsilon_j e_j$.
Then the relative differentials $\Omega_{\sh A/\sh O_S}$ over $S$ are generated by the $\dd\!e_j$, with relations $e_{i'}\dd\!e_i + e_i\dd\!e_{i'} - \sum_j \mu_{jii'} \dd\!e_j = 0$ for all $i,i'$ and $\sum_j \epsilon_j \dd\!e_j = 0$.
Therefore we get a canonical presentation $\omega_{\sh A/\sh O_S} \colon \sh A^{r^2+1} \to \sh A^r$ of the $\sh A$-module $\Omega_{\sh A/\sh O_S}$, which is compatible with base change.

\begin{prop}
  Let $l$ be any extension of $k$.
  The category of finite locally free $\bb P^1_l$-schemes $T$ smooth over $l$ at $T \times_{\bb P^1_l} \infty$ is equivalent to that of finite locally free $\bb P^1_l$-schemes $T$, together with morphisms
  \[
    i \colon \sh O_{T \times_{\bb P^1_l} \infty^{(2)}} \to O_{T \times_{\bb P^1_l} \infty^{(2)}}^{2r}, \qquad j \colon O_{T \times_{\bb P^1_l} \infty^{(2)}}^{2r} \to O_{T \times_{\bb P^1_l} \infty^{(2)}}^{(2r)^2+2}
  \]
  of $O_{T \times_{\bb P^1_l} \infty^{(2)}}$-modules such that $(\omega_{O_{T \times_{\bb P^1_l} \infty^{(2)}}/l} \oplus i)j$ is the identity on $O_{T \times_{\bb P^1_l} \infty^{(2)}}^{2r}$;
  the morphisms in the latter category are simply the morphisms of $\bb P^1_l$-schemes.
\end{prop}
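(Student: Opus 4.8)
The plan is to realise the asserted equivalence as the forgetful functor $(T,i,j) \mapsto T$ from the second category to the first. Since in both categories the morphisms are exactly the morphisms of $\bb P^1_l$-schemes $T \to T'$ (the data $i,j$ being ignored), this functor is automatically fully faithful, so the entire content is the biconditional: writing $\sh A = \sh O_{T \times_{\bb P^1_l} \infty^{(2)}}$ (a finite, hence Artinian, $l$-algebra, free of rank $2r$ over $l$), the curve $T$ is smooth over $l$ at the points above $\infty$ if and only if there exist $\sh A$-linear maps $i,j$ with $(\omega_{\sh A/l} \oplus i)\,j = \mathrm{id}_{\sh A^{2r}}$. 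I would prove this biconditional; well-definedness of the functor and its essential surjectivity both fall out of it.

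First I would unwind the relation. The map $\omega_{\sh A/l} \oplus i \colon \sh A^{(2r)^2+2} \to \sh A^{2r}$ admits a right inverse $j$ if and only if it is surjective, because its target $\sh A^{2r}$ is free, hence projective, and every surjection onto a projective module splits; so the pair $(i,j)$ is precisely an explicit certificate of surjectivity. Since $\operatorname{coker}(\omega_{\sh A/l}) = \Omega_{\sh A/l}$ by the canonical presentation constructed just above, the existence of some $i$ making $\omega_{\sh A/l} \oplus i$ surjective is equivalent to $\Omega_{\sh A/l}$ being generated by the single element $i(1)$, i.e.\ to $\Omega_{\sh A/l}$ being a cyclic $\sh A$-module. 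As $\sh A$ is semilocal with maximal ideals indexed by the points $P$ of $T$ above $\infty$, Nakayama's lemma converts cyclicity into the pointwise condition $\dim_{\kappa(P)} \Omega_{\sh A/l} \otimes \kappa(P) \le 1$ for every such $P$.

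The main step is to identify this fibrewise cotangent dimension with that of $\Omega_{T/l}$. I would work at a point $P$ above $\infty$, with local ring $C = \sh O_{T,P}$ and uniformiser $y$ of $\bb P^1_l$ at $\infty$ (so locally $\sh A_P = C/y^2 C =: \bar C$), and compare $\Omega_{C/l}$ with $\Omega_{\bar C/l}$ via the conormal sequence of $C \twoheadrightarrow \bar C$, whose ideal is $I = y^2 C$. The crucial observation is that the connecting map $I/I^2 \to \Omega_{C/l} \otimes_C \bar C$ sends $y^2$ to $2y\,\mathrm{d}y \in y\,\Omega_{C/l}$, so after tensoring with $\kappa(P)$ (which kills $y$, as $y$ vanishes at $P$) its image vanishes; hence the natural surjection $\Omega_{C/l} \otimes \kappa(P) \to \Omega_{\bar C/l} \otimes \kappa(P)$ is an isomorphism, and the two cotangent dimensions coincide. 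This is exactly where the second infinitesimal neighbourhood is needed, and it is the one genuinely delicate point: for the bare fibre $\infty^{(1)} = C/yC$ the analogous connecting map sends $y \mapsto \mathrm{d}y$, which need not be divisible by $y$, so a singular point can look deceptively cyclic (e.g.\ for the node $\Spec l[s,w]/(sw) \to \bb A^1_l$ with $y = s+w$, the fibre cotangent dimension at the origin is $1$ on the first neighbourhood but $2$ on the second), and verifying that passing to $\infty^{(2)}$ restores the correct dimension is the heart of the argument.

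Finally I would invoke the differential criterion for smoothness over a field. Because $T$ is finite locally free over $\bb P^1_l$ it is equidimensional of dimension $1$, so at every point $P$ above $\infty$ one has $\dim_{\kappa(P)} \Omega_{T/l} \otimes \kappa(P) \ge 1$, with equality if and only if $T \to \Spec l$ is smooth at $P$; here I must use the version of this criterion valid for arbitrary, possibly imperfect, $l$, since the statement allows any extension. Chaining the equivalences yields that the data $(i,j)$ exists $\iff \Omega_{\sh A/l}$ is cyclic $\iff \dim_{\kappa(P)} \Omega_{T/l} \otimes \kappa(P) \le 1$ for all $P \mid \infty \iff T$ is smooth over $l$ at $\infty$. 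This shows both that the forgetful functor indeed lands in the first category and that it is essentially surjective, which together with full faithfulness completes the equivalence.
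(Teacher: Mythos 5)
Your proposal is correct and follows essentially the same route as the paper: reduce the existence of $(i,j)$ to cyclicity of the relative differentials of the second infinitesimal neighbourhood (using that a surjection onto a free module splits), compare these with $\Omega_{T/l}$ near $\infty$, and conclude with Nakayama and the differential criterion for smoothness over an arbitrary field. The only real difference is one of bookkeeping: where the paper asserts the isomorphism $\Omega_{B/l}\otimes_B(B/yB)\cong\Omega_{(B/y^2B)/l}\otimes_{B/y^2B}(B/yB)$ and spreads cyclicity to an open neighbourhood, you verify the corresponding residue-field statement directly via the conormal sequence (the image of $y^2$ is $2y\,\mathrm{d}y$, which dies modulo the maximal ideal) and then apply the fibrewise Jacobian criterion, which if anything makes the necessity of passing to the \emph{second} neighbourhood more transparent.
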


\begin{proof}
  We will first show that $T$ is smooth over $l$ at $T \times_{\bb P^1_l} \infty$ if and only there exist $i$ and $j$ as in the proposition.

  Write $B$ for the ring of global sections of $T \times_{\bb P^1_l} (\bb P^1_l - 0)$, and note that it is a finite locally free $l[y]$-algebra.
  Then $T \times_{\bb P^1_l} \infty^{(2)} = \Spec B/y^2B$.
  First suppose that there exist morphisms 
  \[
    i \colon (B/y^2B) \to (B/y^2B)^{2r}, \qquad j \colon (B/y^2B)^{2r} \to (B/y^2B)^{(2r)^2+2}
  \]
  such that for the canonical presentation $\omega_{(B/y^2B)/l} \colon (B/y^2B)^{(2r)^2+1} \to (B/y^2B)^{2r}$ of $\Omega_{(B/y^2B)/l}$ as a $(B/y^2B)$-module, we have $(\omega_{(B/y^2B)/l} \oplus i) j = \id$.
  It immediately follows that $\Omega_{(B/y^2B)/l}$ is generated by one element as $B/y^2B$-module.

  Conversely, if $\Omega_{(B/y^2B)/l}$ is generated by one element, we let $i$ be a morphism from $(B/y^2B)$ to $(B/y^2B)^{2r}$ sending $1$ to (a lift of) a generator of $\Omega_{(B/y^2B)/l}$.
  Hence $(\omega_{(B/y^2B)/l} \oplus i)$ is a surjective morphism to a free $B/y^2B$-module, so it has a section $j$, as desired.

  It remains to show that $\Omega_{(B/y^2B)/l}$ is generated as a $B/y^2B$-module by one element if and only if $T$ is smooth over $l$ at all points lying over $\infty \in \bb P^1_l$.
  Note that 
  we have an isomorphism
  \[
    \Omega_{B/l} \otimes_{B} (B/yB) \to \Omega_{(B/y^2B)/l} \otimes_{B/y^2B} (B/yB),
  \]
  and that by Nakayama's lemma, the right hand side (and therefore the left hand side) is generated as a $B/yB$-module by one element if and only if $\Omega_{(B/y^2B)/l}$ is generated as a $B/y^2B$-module by one element.
  Therefore, again by Nakayama's lemma, there exists some $f \in 1+yB$ such that $\Omega_{B/l} \otimes_B B_f$ is generated as a $B_f$-module by one element.
  So the left hand side is a $B/yB$-module generated by one element if and only if there exists a neighbourhood of $T \times_{\bb P^1_l} \infty$ that is smooth over $l$, which holds if and only if $T$ is smooth over $l$ at all points lying over $\infty \in \bb P^1_l$.

  So now we have a forgetful functor from the category of finite locally free $\bb P^1_l$-schemes $T$ together with morphisms $i$ and $j$ as in the proposition, to that of finite locally free $\bb P^1_l$-schemes $T$ smooth over $l$ at $T \times_{\bb P^1_l} \infty$, which is essentially surjective by the above, and fully faithful by construction.
\end{proof}

\subsection{Bounds on types}

In order to construct a groupoid scheme with the desired properties using the above, we first need to bound the number of possible types.

\begin{lem} \label{d:types-nonpositive}
  Let $S$ be a scheme, let $a$ be a finite sequence of integers, let $X$ be a finite locally free $\bb P^1_S$-scheme of which the underlying $\sh O_{\bb P^1_S}$-modules is standard of type $a$, and suppose that $X$ has geometrically reduced fibres over $S$.
  Then $a$ is non-positive (i.e.~all of its elements are non-positive). 
\end{lem}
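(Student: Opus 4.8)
The plan is to reduce the statement to a single geometric fibre over an algebraically closed field, and there to read off the type $a$ from the ring of global functions on $X$, whose dimension reducedness forces to be as small as possible. The formation of a standard module of type $a$ commutes with base change (the pullback of $\sh O_{\bb P^1_S}(a_i)$ along $\bb P^1_{\kappa(s)} \to \bb P^1_S$ is $\sh O_{\bb P^1_{\kappa(s)}}(a_i)$), and the sequence $a$ is the same for every fibre, so it suffices to prove the claim after passing to a fibre. Choosing a point $s \in S$ and base changing to $k = \overline{\kappa(s)}$, geometric reducedness gives that $X_k$ is reduced, while $\pi \colon X_k \to \bb P^1_k$ remains finite locally free of rank $s$ (the length of $a$) with $\pi_* \sh O_{X_k} \cong \bigoplus_{i=1}^s \sh O_{\bb P^1_k}(a_i)$, since $\pi$ is affine and so $\pi_*$ commutes with base change. (If $a$ is empty there is nothing to prove, so assume $s \ge 1$.)

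Next I would pass to connected components. Writing $X_k = \bigsqcup_j X_j$ as the disjoint union of its connected components, each $X_j$ is open and closed in $X_k$, so $\pi_* \sh O_{X_j}$ is a direct summand of $\pi_* \sh O_{X_k}$ and is therefore a vector bundle on $\bb P^1_k$; by Dedekind--Weber it is standard of some type $a^{(j)}$, and by the uniqueness of the type the multiset $a$ is the disjoint union of the multisets $a^{(j)}$. Hence it suffices to show each $a^{(j)}$ is non-positive, i.e.~we may assume $X_k$ is connected and reduced.

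The key computation is then as follows. Being finite over the proper $k$-scheme $\bb P^1_k$, the connected reduced scheme $X_k$ is proper over $k$, so $\Gamma(X_k, \sh O_{X_k})$ is a finite-dimensional $k$-algebra; it is reduced because $X_k$ is reduced, it has no nontrivial idempotents because $X_k$ is connected, and $k$ is algebraically closed, so $\Gamma(X_k, \sh O_{X_k}) = k$ has $k$-dimension $1$. On the other hand $\Gamma(X_k, \sh O_{X_k}) = \hl^0(\bb P^1_k, \pi_* \sh O_{X_k}) = \bigoplus_{i} \hl^0(\bb P^1_k, \sh O_{\bb P^1_k}(a_i))$, of dimension $\sum_i \max(a_i + 1, 0)$. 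Equating the two, the relation $\sum_i \max(a_i+1,0) = 1$ forces exactly one $a_i$ to equal $0$ and all the others to be strictly negative; in particular $a^{(j)}$, and hence $a$, is non-positive.

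I expect the only real subtlety — rather than a genuine obstacle — to lie in the two reductions: one must take reducedness geometrically, and therefore base change to an algebraically closed field, precisely so that $\Gamma(X_k,\sh O_{X_k})$ collapses to $k$; and one must invoke uniqueness of the type to know that splitting $X_k$ into connected components splits the multiset $a$ accordingly. Once these are in place the dimension count is completely forced and the conclusion is immediate.
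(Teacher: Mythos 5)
Your proof is correct and follows essentially the same route as the paper's: reduce to a geometric fibre over an algebraically closed field, split into connected components, and use that the global sections of a connected reduced proper scheme over $k = \bar k$ equal $k$, so that $\sum_i \max(a_i+1,0)=1$ forces non-positivity. You simply spell out the final dimension count that the paper leaves implicit.
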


\begin{proof}
  By taking a geometric fibre if necessary, we assume without loss of generality that $S$ is the spectrum of an algebraically closed field $k$.
  Let $X_1, \dotsc, X_t$ be the connected components of $X$.
  Then there exist finite sequences $a_1,\dotsc,a_t$ such that for all $i$, the algebra $\sh O_{X_i}$ is of type $a_i$.
  These have the property that their concatenation is equal to $a$ up to a permutation.
  Hence we assume without loss of generality that $X$ is connected.
  In this case $X$ is a reduced curve over $S$, so $\sh O_X(\bb P^1_S) = \sh O_X(X) = \sh O_S(S) = k$, where $\pi$ is the structure morphism of $X$, so we deduce that $a$ is non-positive.
\end{proof}

\begin{rem}
  Of course, the converse is not true; a counterexample is the \mbox{$\sh O_{\bb P^1_k}$-module} $\sh O_{\bb P^1_k} \oplus \sh O_{\bb P^1_k}(-1) \epsilon$ with multiplication given by $\epsilon^2 = 0$.
\end{rem}

In \autoref{s:groupoid}, let $l$ be a perfect extension of $k$, and let $T$ be the normal completion of a $j'_!G$-torsor on $(Y_l)_{\et}$.
Let $a$ be the type of $\sh O_Y$, let $b$ be the type of $\sh O_T$, and let $s,t$ be their respective lengths.
Then by the above, both $a$ and $b$ are non-negative.
As the degree of the finite locally free morphism $T \to Y_l$ is equal to $\#G$, we see that $t = s \cdot \#G$.
Moreover, if $S_\infty = \emptyset$, then by \autoref{d:etale-discriminant}, we have $\sum_j b_j = \#G \cdot \sum_i a_i$; so up to permutation, we only have finitely many possibilities for $b$.
So suppose that $S_\infty = \infty$.

\begin{lem} \label{d:types-genus}
  Let $S$ be a scheme, let $a$ be a finite sequence of integers, and let $X$ be finite locally free $\bb P^1_S$-scheme such that $\sh O_X$ is a standard module over $S$ of type $a$, where $a$ has length $s$, and such that $X$ is smooth over $S$.
  Then $X$ is a family of curves over $S$ of Euler characteristic $s + \sum_i a_i$.
\end{lem}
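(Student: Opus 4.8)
The plan is to compute the relative Euler characteristic $\chi(X/S) = \sum_n (-1)^n \dim R^n \rho_* \sh O_X$, where $\rho \colon X \to S$ denotes the structure morphism, by pushing everything down to $\bb P^1_S$ and invoking the known cohomology of line bundles on the projective line.

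First I would record that $X \to S$ is indeed a family of curves. Writing $\pi \colon X \to \bb P^1_S$ for the finite locally free structure morphism and $p \colon \bb P^1_S \to S$ for the projection, the composition $\rho = p\pi$ is proper (finite followed by proper) and flat (finite locally free followed by the flat morphism $p$), and of pure relative dimension $1$ since $\pi$ is finite and $p$ has relative dimension $1$; together with the smoothness hypothesis this exhibits $X$ as a smooth family of curves over $S$.

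Next, as $\pi$ is finite, hence affine, the higher direct images $R^n \pi_* \sh O_X$ vanish for $n > 0$, so $R\rho_* \sh O_X = Rp_* \pi_* \sh O_X$ and therefore $\chi(X/S) = \chi(\bb P^1_S/S, \pi_* \sh O_X)$. By hypothesis $\pi_* \sh O_X \iso \bigoplus_{i=1}^s \sh O_{\bb P^1_S}(a_i)$ as an $\sh O_{\bb P^1_S}$-module, and Euler characteristics are additive over direct sums, so it suffices to compute $\chi(\bb P^1_S/S, \sh O_{\bb P^1_S}(d))$ for a single integer $d$.

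Finally, the standard computation of the cohomology of line bundles on $\bb P^1$, valid over the base $S$ and compatible with base change, gives $\chi(\bb P^1_S/S, \sh O_{\bb P^1_S}(d)) = d+1$: for $d \ge 0$ the sheaf $p_* \sh O_{\bb P^1_S}(d)$ is locally free of rank $d+1$ and $R^1 p_* \sh O_{\bb P^1_S}(d) = 0$, for $d \le -2$ one has $p_* \sh O_{\bb P^1_S}(d) = 0$ and $R^1 p_* \sh O_{\bb P^1_S}(d)$ locally free of rank $-d-1$, and for $d = -1$ both vanish, so the alternating rank is $d+1$ in every case. Summing over $i$ gives $\chi(X/S) = \sum_{i=1}^s (a_i + 1) = s + \sum_i a_i$, as claimed. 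There is no substantial obstacle here; the only point deserving a word of care is that each of the identifications above---the vanishing of $R^{>0}\pi_*$, the splitting of $\pi_* \sh O_X$, and the cohomology of $\sh O_{\bb P^1_S}(d)$---is stable under base change along $S$, which is exactly what makes the Euler characteristic locally constant on $S$ and equal to the stated value on every fibre.
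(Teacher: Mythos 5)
Your proof is correct and follows essentially the same route as the paper's: reduce via the finite (hence affine) morphism $\pi$ to the cohomology of $\sh O_{\bb P^1}(a)$ and apply the standard Euler characteristic formula for line bundles on $\bb P^1$. The only cosmetic difference is that the paper immediately passes to geometric fibres whereas you work relatively over $S$ and note base-change compatibility; both are fine.
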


\begin{proof}
  It suffices to check this on geometric fibres, so we may assume that $S$ is the spectrum of an algebraically closed field $k$.
  Then
  \begin{align*}
    \dim_k \hl^0(X,\sh O_X) - \dim_k \hl^1(X,\sh O_X) &= \dim_k \hl^0\pth*[big]{\bb P^1_k,\sh O_{\bb P^1_k}(a)} - \dim_k \hl^1\pth*[big]{\bb P^1_k,\sh O_{\bb P^1_k}(a)} \\
        &= \sum_{i} (1+a_{i}) \\
        &= s + \sum_i a_i.
  \end{align*}
\end{proof}

\begin{prop} \label{d:types-bound}
  Let $S$ be a scheme, and let $Y \to X$ be a morphism of finite locally free $\bb P^1_S$-schemes, with $\sh O_X$ and $\sh O_Y$ standard modules over $S$ of respective types $a$ and $b$, which have respective lengths $s,t$.
  Let $G$ be a finite group of order invertible in $S$ acting on $Y$ over $X$, such that $Y \times_{\bb P^1_S} \bb A^1_S$ is a $G$-torsor over $X \times_{\bb P^1_S} \bb A^1_S$.
  Then
  \[
    \sum_j b_j \geq \#G \sum_i a_i - \tfrac12 t.
  \]
\end{prop}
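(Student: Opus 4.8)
The plan is to deduce the inequality from the \emph{transitivity of the discriminant} applied to the tower $Y \to X \to \bb P^1_S$, together with a bound on the resulting discriminant divisor coming from tameness. Since the types $a$ and $b$ are constant along $S$, the four quantities $\sum_i a_i$, $\sum_j b_j$, $s$ and $t$ are unchanged upon passing to a geometric fibre, so it suffices to prove the inequality when $S = \Spec k$ with $k$ algebraically closed. On such a fibre, $g \colon Y \to X$ restricts over the dense open $X^\circ := X \times_{\bb P^1} \bb A^1$ to a $G$-torsor, hence a finite \'etale cover of degree $\#G$; as $X$ is smooth (so its local rings are regular) and $Y$ is Cohen--Macaulay with finite fibres, $g$ is finite locally free of constant rank $\#G$, and in particular $t = \#G \cdot s$.

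Writing $\pi \colon X \to \bb P^1_k$ for the structure morphism, I would then invoke the transitivity of the discriminant for $Y \to X \to \bb P^1_k$ to obtain the isomorphism of line bundles $\Delta_{Y/\bb P^1} \iso \Nm_\pi \Delta_{Y/X} \otimes \Delta_{X/\bb P^1}^{\otimes \#G}$ on $\bb P^1_k$, and take degrees. Since the pushforwards of $\sh O_X$ and $\sh O_Y$ to $\bb P^1_k$ are standard of types $a$ and $b$, the discriminants $\Delta_{X/\bb P^1}$ and $\Delta_{Y/\bb P^1}$ are sections of $\sh O(-2\sum_i a_i)$ and $\sh O(-2\sum_j b_j)$ respectively; moreover the norm of a line bundle preserves degrees (by Riemann--Roch on $X$), so that $\deg \Nm_\pi \sh O_X(D) = \deg_X D$, where $D$ is the effective discriminant divisor of $g$ on $X$. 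Comparing degrees gives
\[
  \sum_j b_j = \#G \sum_i a_i - \tfrac12 \deg_X D,
\]
so that the proposition is \emph{equivalent} to the bound $\deg_X D \le t = \#G \cdot s$.

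The main work, and the step I expect to be the real obstacle, is this bound on $\deg_X D$. Because $g$ is \'etale over $X^\circ$, the section $\Delta_{Y/X}$ is invertible there, so $D$ is supported on the fibre $X_\infty := X \times_{\bb P^1} \infty$, which is finite of degree $s$ over $k$. Since $\#G$ is invertible in $k$, the cover $g$ is \emph{tamely} ramified over $X_\infty$, and I would bound the local discriminant exponent at each point $x$ of $X_\infty$. Where $X$ is smooth at $x$ the local ring is a discrete valuation ring, and the classical computation for tame extensions gives different exponent $e-1$ at a point of ramification index $e$; hence the local discriminant exponent is at most $(\#G - 1)\,\ell_x$, where $\ell_x$ is the length of $X_\infty$ at $x$, or equivalently $y^{\#G - 1}$ lies in the discriminant ideal, i.e.\ $D \le (\#G - 1) X_\infty$ as effective divisors. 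Summing over $x$ yields $\deg_X D \le (\#G - 1) s < \#G \cdot s = t$, which is slightly stronger than what is required, and completes the proof.

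The one delicate point is to make the tame bound $y^{\#G-1} \in \Delta_{Y/X}$ robust at points where $X$ fails to be regular (so that the local rings are no longer discrete valuation rings); there one would verify the containment directly on the discriminant ideal, checking that the generically \'etale locally free algebra of rank $\#G$ with invertible $\#G$ has discriminant ideal containing $y^{\#G-1}$. In the situation of actual interest, however, $X = \nc X$ is smooth, so the regular case treated above already suffices.
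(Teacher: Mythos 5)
Your overall strategy is the same as the paper's in spirit --- reduce to a geometric fibre, observe that $Y \to X$ is tame because $\#G$ is invertible and \'etale away from $\infty$, and turn a bound on the ramification over $\infty$ into the stated degree inequality --- but you route the degree count through the transitivity of the discriminant for $Y \to X \to \bb P^1_k$, whereas the paper applies Riemann--Hurwitz to $Y \to X$ directly, reading off the Euler characteristics $\chi(\sh O_X) = s + \sum_i a_i$ and $\chi(\sh O_Y) = t + \sum_j b_j$ from the types (\autoref{d:types-genus}) and bounding the degree of the ramification divisor by $t$. Your local estimate $\deg_X D \leq (\#G-1)s = t - s$ is marginally sharper than the paper's bound, and the identity $\sum_j b_j = \#G \sum_i a_i - \tfrac12 \deg_X D$ is a nice reformulation; both proofs implicitly use smoothness of $X$ and $Y$ (yours for the DVR computation of the tame discriminant exponent, the paper's for Riemann--Hurwitz), which holds in the intended application.

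There is, however, one genuine gap in your route that the paper's avoids: to ``take degrees'' in $\Delta_{Y/\bb P^1_k} \iso \Nm_\pi \Delta_{Y/X} \otimes \Delta_{X/\bb P^1_k}^{\otimes \#G}$ you need $\Delta_{X/\bb P^1_k}$ and $\Delta_{Y/\bb P^1_k}$ to be \emph{nonzero} sections of $\sh O(-2\sum_i a_i)$ and $\sh O(-2\sum_j b_j)$, i.e.\ you need $X \to \bb P^1_k$ (hence $Y \to \bb P^1_k$) to be generically \'etale. This is not among the hypotheses of the proposition, and it is not guaranteed where the proposition is used: in \autoref{s:groupoid} the morphism $\nc{X} \to \bb P^1_k$ is only assumed finite locally free, and in positive characteristic a smooth proper curve can be a generically inseparable cover of $\bb P^1_k$ (e.g.\ one factoring through Frobenius), in which case both discriminants over $\bb P^1_k$ vanish identically, their divisors are undefined, and the transitivity formula degenerates to $0 = \Nm_\pi\Delta_{Y/X} \otimes 0$, yielding no information. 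Your key identity is still true in that case, but your derivation of it is not. The fix is essentially to do what the paper does: work with $Y \to X$ alone (Riemann--Hurwitz, or equivalently the different of $Y/X$ and dualizing sheaves), which only requires tameness of $Y \to X$ and never refers to separability over $\bb P^1_k$; the passage from Euler characteristics to the sums $\sum_i a_i$, $\sum_j b_j$ is then a cohomology computation on $\bb P^1_k$ that is insensitive to inseparability.
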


\begin{proof}
  It suffices to check this on geometric fibres, so we may assume that $S$ is the spectrum of an algebraically closed field $k$.
  As $G$ acts transitively on $Y$ over $X$, and the order of $G$ is invertible in $k$, it follows that $Y$ is tamely ramified over $X$.
  Therefore the ramification degree of $Y$ over $X$ is at most $t$, as $Y \times_{\bb P^1_k} \bb A^1_k$ is \'etale over $X \times_{\bb P^1_k} \bb A^1_k$, and $Y$ has degree $t$ over $\bb P^1_k$.
  So by the Riemann-Hurwitz formula, we have
  \[
    -2t - 2\sum_{j} b_j \leq -2\tfrac ts s - 2\tfrac ts \sum_i a_i + t,
  \]
  as desired (note that $t = s \cdot \#G$).
\end{proof}

So therefore, also in the case that $S_\infty = \infty$, we see that there are only finitely many possibilities for the type $b$ of $T$.

\subsection{Computation of the groupoid scheme}

Now we see that, in \autoref{s:groupoid}, the description of the category of $j_!\sh A$-torsors on $X_{\et}$ in terms of vector bundles on $\bb P^1_k$ gives, for each of the (finitely many) possibilities for the type $b$, a groupoid scheme $\mdl R_b \rightrightarrows \mdl U_b$ of which $\mdl R_b$ and $\mdl U_b$ are (explicitly given) closed subschemes of some $\bb A^N_k$.
Let $\mdl R = \coprod_b \mdl R_b$ and $\mdl U = \coprod_b \mdl U_b$.

\begin{prop}
  The groupoid scheme $\mdl R \rightrightarrows \mdl U$ satisfies the conditions of \autoref{d:stack-key-lemma}.
  Moreover, for fixed $b$, every isomorphism class in $\mdl U_{b,k^\alg}$ has the same dimension.
\end{prop}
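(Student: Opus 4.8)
The plan is to verify, one at a time, the three hypotheses of \autoref{d:stack-key-lemma} for the stack $\cat T$ of $\Gamma$-equivariant $j'_!G$-torsors together with the groupoid $\mdl R\rightrightarrows\mdl U$, and then to read off the dimension statement from the resulting groupoid structure. Two of the hypotheses are essentially already in hand. The diagonal of $\cat T$ is representable and finite \'etale by \autoref{h:torsors} (the corollary computing $g_{\bsit,*}\sh I'$). And $\mdl R$, $\mdl U$ are of finite type over $k$: each $\mdl R_b$ and $\mdl U_b$ was cut out inside an affine space $\bb A^N_k$ by the polynomial conditions of \autoref{h:standard-modules} (commutativity of the algebra relations, the discriminant conditions of \autoref{d:etale-discriminant}, the torsor-at-$0$ condition, the smoothness datum of \autoref{h:groupoid-smooth}, and $G$- and $\Gamma$-equivariance), and there are only finitely many admissible types $b$ by \autoref{d:types-bound}. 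The equivalence hypothesis is the content of the chain of equivalences built up in this section: after base change to $k^{\perf}$ every finite separable extension $l/k$ is perfect, so \autoref{d:galois-general} followed by the successive reformulations (normal completion, the \'etale/discriminant criterion, the torsor criterion via the fibre at $0$, and the smoothness-at-$\infty$ criterion) identifies $\cat T(l)$ with the quotient groupoid $[\mdl U(l)/\mdl R(l)]$ of standard-module data; since $\cat T$ is a stack and the $l$-points of $\mdl U$ surject onto objects of $\cat T(l)$ with $\mdl R(l)$ giving exactly the isomorphisms, this upgrades to the required equivalence $p^{-1}p_*[\mdl U/\mdl R]\to p^{-1}p_*\cat T$.

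The substantive point, and the one I expect to be the main obstacle, is that both structure maps $\alpha,\omega\colon\mdl R\to\mdl U$ are smooth with geometrically irreducible fibres; by the inverse of the groupoid it suffices to treat $\omega$. I would fix a geometric point $u$ of $\mdl U_b$ corresponding to an object $T$ (a standard module of type $b$ carrying the algebra, $\nc{Y_l}$-scheme, $G$- and $\Gamma$-structure and a section, together with its rigidifying data) and describe $\omega^{-1}(u)$ explicitly. A point of it is a pair consisting of an isomorphism $\phi\colon T'\to T$ of the underlying structured $\nc{Y_l}$-schemes and a choice of rigidifying data on the source $T'$. Since the entire structure on $T'$ is recovered from that on $T$ by transport along $\phi$, the isomorphism $\phi$ is subject to no further constraint and ranges over the automorphism group scheme $\mathrm{Aut}(\sh O_{\bb P^1_l}(b))$, while the rigidifying data ranges over a space fibred from a product of $\mathrm{GL}$'s (the basis at $0$) and an affine bundle of splittings of the canonical presentation of the relative differentials (the datum $(i,j)$ of \autoref{h:groupoid-smooth}).

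Each factor is smooth and geometrically irreducible of dimension depending only on $b$: the group $\mathrm{Aut}(\sh O_{\bb P^1_l}(b))$ is the open, identity-containing (hence irreducible) invertible locus inside the affine space $\mathrm{End}(\sh O_{\bb P^1_l}(b))=\bb A^{N(b,b)}$; the basis-at-$0$ datum is a torsor under a product of $\mathrm{GL}$'s, an open subscheme of affine space; and the space of smoothness data is, over a valid smooth source, nonempty by \autoref{h:groupoid-smooth} and a torsor under a vector group, hence geometrically irreducible. Assembling these as a tower of $\mathrm{GL}$- and $\mathrm{Aut}$-torsors and affine bundles exhibits $\omega$ (and so $\alpha$) as a composition of smooth morphisms with geometrically irreducible fibres of constant dimension $d=N(b,b)+(\text{rigidifying dimension})$; the real work is the flatness and geometric irreducibility of the smoothness-datum layer. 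With this, $\mdl R\rightrightarrows\mdl U$ satisfies the hypotheses of \autoref{d:stack-key-lemma}, and the final clause of that proposition already yields that the connected components of $\mdl U_{k^{\sep}}$ are irreducible.

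Finally, for the dimension statement I would use that, by the isomorphism $\pi_0(\mdl U)\iso\pi_0(\cat T)$ of \autoref{d:stack-key-lemma}, every isomorphism class in $\mdl U_{b,k^\alg}$ is a geometric connected component, namely the orbit $\alpha(\omega^{-1}(u))$ of any of its points $u$. The fibre $\omega^{-1}(u)$ is irreducible of the constant dimension $d$ just found, and the fibres of $\alpha$ restricted to it over a point $u'$ are the sets $\Isom(u',u)$, which are torsors under the automorphism group of the object and hence $0$-dimensional, since the diagonal of $\cat T$ is finite \'etale. Therefore the orbit, which is dense in the component, has dimension $\dim\omega^{-1}(u)=d$, independent of $u$ and so of the component. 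This shows that for fixed $b$ all isomorphism classes in $\mdl U_{b,k^\alg}$ have the same dimension.
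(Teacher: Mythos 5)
Your proposal follows essentially the same route as the paper: reduce to showing that the fibre of one structure map over a geometric point of $\mdl U_b$ is irreducible of dimension depending only on $b$, decompose that fibre into the automorphisms of $\sh O_{\bb P^1}(b)$ (transport of structure), the basis datum at $0$, and the smoothness datum at $\infty$, observe each factor is a non-empty open in affine space or an affine bundle over one, and then deduce the dimension claim from the finiteness of the Isom-sheaves. The one step you leave sketchy --- geometric irreducibility of the $(i,j)$-layer, which is not globally a vector-group torsor --- is exactly where the paper works: it shows this layer is Zariski-locally a trivial affine bundle $V\times\bb A^N$ over the open locus where $\omega\oplus i$ is surjective, with $N$ independent of the choices.
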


\begin{proof}
  By construction, it remains to check that the isomorphism classes in $\mdl U_b$ are irreducible for all finite sequences $b$ of integers.
  So let $a$ be the type of the underlying $\bb P^1_k$-vector bundle of $\nc{Y}$, and $s$ its length, and let $b$ be a finite sequence of integers, and $t$ its length.
  Denote the morphisms $\mdl R_b \rightrightarrows \mdl U_b$ by $\alpha_b, \omega_b$, with $\alpha_b$ sending a morphism to its source, and with $\omega_b$ sending a morphism to its target.
  For this, it suffices to show that for all $x \in \mdl U_b(k^{\alg})$, the geometric fibre $H$ of $\alpha_b$ above $x$ is irreducible, since the image $\omega_b(H)$ in $\mdl U_{b,k^\alg}$ is by definition the isomorphism class of $x$.

  Note that, for any $k^\alg$-scheme $S$, giving an isomorphism with fixed source (say with underlying $\nc{Y}_S$-scheme $T$) in the groupoid $\mdl R_b(S) \rightrightarrows \mdl U_b(S)$ is, by transport of structure, the same as giving:
  \begin{itemize}
    \item an $\sh O_{\bb P^1_S}$-linear automorphism of $\sh O_{\bb P^1_S}(b)$;
    \item an $\sh O_{\nc{Y}_S \times_{\bb P^1_S} 0}$-linear automorphism of $\sh O_{\nc{Y}_S \times_{\bb P^1_S} 0}^{\#A}$;
    \item if $S_\infty = \infty$, morphisms 
      \[
        i \colon \sh O_{T \times_{\bb P^1_S} \infty^{(2)}} \to O_{T \times_{\bb P^1_S} \infty^{(2)}}^{2\#A}, \qquad j \colon O_{T \times_{\bb P^1_S} \infty^{(2)}}^{2\#A} \to O_{T \times_{\bb P^1_S} \infty^{(2)}}^{(2\#A)^2+2}
      \]
      such that $(\omega_{T \times_{\bb P^1_S} \infty^{(2)}/S} \oplus i)j = \id$.
  \end{itemize}

  In case $S_\infty = \emptyset$, we obtain an obvious isomorphism $H \to H_1 \times H_2$, and in case $S_\infty = \infty$, we obtain an obvious map $H \to H_1 \times H_2 \times H_3$, where
  \begin{itemize}
    \item $H_1$ is the functor sending a $k^\alg$-scheme $S$ to $\Aut_{\sh O_{\bb P^1_S}}\pth*[big]{\sh O_{\bb P^1_S}(b)}$;
    \item $H_2$ is the functor sending a $k^\alg$-scheme $S$ to $\Aut_{\sh O_{\nc{Y}_S \times_{\bb P^1_S} 0}}\pth*[big]{\sh O_{\nc{Y}_S \times_{\bb P^1_S} 0}^{\#G}}$;
    \item $H_3$ is the functor sending a $k^\alg$-scheme $S$ to the subset of
      \[
        \Hom_{\sh O_{T \times_{\bb P^1_S} \infty^{(2)}}}(\sh O_{T \times_{\bb P^1_S} \infty^{(2)}},\sh O_{T \times_{\bb P^1_S} \infty^{(2)}}^{2\#G})
      \]
      of $i$ such that $\omega_{T \times_{\bb P^1_S} \infty^{(2)}} \oplus i$ is surjective.
  \end{itemize}

  First note that using the description of standard modules, we easily see that $H_1$ is representable by a finite product of factors of the form $\bb G_{m,k}$ and $\bb A^1_k$, so therefore by a smooth, irreducible $k^\alg$-scheme.
  Moreover, note that $H_2$ is isomorphic to the functor sending a $k^\alg$-scheme $S$ to $\Hom_S(\nc{Y}_S \times_{\bb P^1_S} 0, \GL_{\#G,S})$, which, as $\sh O_{\nc{Y}_S \times_{\bb P^1_S} 0}$ is finite free over $\sh O_S$ with a given basis functorial in $S$, is representable by a non-empty open subscheme of $\bb A^{s(\#G)^2}_{k^\alg}$.
  Hence $H_2$ is a smooth, irreducible $k^\alg$-scheme as well.
  Similarly, we see that $H_3$ is representable by a non-empty open subscheme of $\bb A^{4t\#G}_{k^\alg}$, and therefore by a smooth, irreducible $k^\alg$-scheme.

  Finally, we show that $H$ is a smooth, irreducible $k^\alg$-scheme.
  We do this by showing that $H$ is Zariski locally on $H_1 \times H_2 \times H_3$ isomorphic to $H_1 \times H_2 \times H_3 \times \bb A^N_{k^\alg}$ for some fixed $N$.

  First note that we have a morphism $H_3 \to \bb A^M_{k^\alg}$ of $k^\alg$-schemes, which is given on the functor of points by sending $i \in H_3(S)$ to the corresponding $4t\#G \times \pth*[big]{2t(2\#G)^2+4t}$-matrix with coefficients in $\sh O(S)$, with respect to the basis subordinate to both the standard bases and the given $k$-basis of $T \times_{\bb P^1_k} \infty^{(2)}$, so that $M = 4t^2\pth*[big]{(2\#G)^3+4\#G}$.
  So let $i \in H_3(k^\alg)$, and view it as a $4t\#G \times \pth*[big]{2t(2\#G)^2+4t}$-matrix over $k^\alg$.
  As this matrix corresponds to a surjective map of $k^\alg$-vector spaces, there is a $4t\#G \times 4t\#G$-minor which is invertible.
  Let $U \subs \bb A^M_{k^\alg}$ be the locus on which this minor is invertible, and let $V$ be the inverse image of $U$ in $H_3$; $V$ is an open neighbourhood of $i$.

  Now let $j \in H_3(V)$ be the open inclusion. 
  By construction, the kernel of $\omega_{T \times_{\bb P^1_V} \infty^{(2)}} \oplus j$ is free over $\sh O_V$.
  Since an $\sh O_{T \times_{\bb P^1_V} \infty^{(2)}}$-linear section of this map is well-defined up to a unique tuple of elements from this kernel, it follows that the inverse image of $H_1 \times H_2 \times V$ in $H$ is isomorphic to $H_1 \times H_2 \times V \times \bb A^N_{k^{\alg}}$ for some fixed $N$ that is independent of the choices made.
  Hence $H$ is a smooth, irreducible $k^\alg$-scheme, as desired.

  Finally note that the dimension of $H$ only depends on the type $b$, and that the induced morphism $H \to \mdl U_{b,k^{\alg}}$ has finite fibres, so every isomorphism class in $\mdl U_{b,k^{\alg}}$ has the same dimension.
\end{proof}

\begin{cor}
  There is a canonical bijection from $\pi_0(\mdl U_{k^\sep})$ to the set of isomorphism classes of $\sh G$-torsors on $X_{k^\sep}$.
  Moreover, all $\mdl U_b$ are equidimensional.
\end{cor}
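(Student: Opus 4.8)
The plan is to read off both statements from \autoref{d:stack-key-lemma}, whose hypotheses the preceding proposition has just verified for the groupoid $\mdl R \rightrightarrows \mdl U$ and the canonical morphism $[\mdl U/\mdl R] \to \cat T$, where $\cat T$ denotes the stack of $j_!\sh G$-torsors on $X_{\et}$ (equivalently, of $\Gamma$-equivariant $j'_!G$-torsors).

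For the bijection, \autoref{d:stack-key-lemma} provides an isomorphism $\pi_0(\mdl U) \to \pi_0(\cat T)$ of sheaves on $(\Spec k)_{\et}$. I would then evaluate this at $k^\sep$: on the left this returns the set $\pi_0(\mdl U_{k^\sep})$ of geometric connected components, and on the right, since over a separably closed field the sheafification defining $\pi_0$ changes nothing, it returns the set of isomorphism classes of objects of $\cat T(k^\sep)$, that is, of $\sh G$-torsors on $X_{k^\sep}$. The resulting bijection is canonical because it is induced by the canonical morphism $[\mdl U/\mdl R] \to \cat T$ used throughout.

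For equidimensionality I would first note that the proof of the preceding proposition actually exhibits the geometric fibre $H$ of $\alpha_b$ as a smooth \emph{irreducible} $k^\alg$-scheme, so the morphisms $\alpha_b,\omega_b$ have geometrically irreducible fibres and the supplementary conclusion of \autoref{d:stack-key-lemma} applies: every geometric connected component of $\mdl U$ is irreducible. Working over $k^\alg$ (which is harmless, as $k^\alg/k^\sep$ is a universal homeomorphism and hence preserves $\pi_0$, irreducibility, and dimension), I would combine this with the isomorphism $\pi_0(\mdl U) \cong \pi_0(\cat T)$ to see that the connected components of $\mdl U_b$ coincide simultaneously with its irreducible components and with the groupoid orbits, i.e.\ the isomorphism classes. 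The preceding proposition tells us each such isomorphism class has one and the same dimension $d_b$, so $\mdl U_{b,k^\alg}$ is equidimensional of dimension $d_b$.

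It then remains to descend this to $k$. For that I would invoke that $\dim$ is unchanged under the field extension $k^\alg/k$ and that the base change to $k^\alg$ of any irreducible component of $\mdl U_b$ is a union of irreducible components of $\mdl U_{b,k^\alg}$, each of dimension $d_b$; hence every irreducible component of $\mdl U_b$ has dimension $d_b$, and $\mdl U_b$ is equidimensional. The one step that will require genuine care is the threefold identification over $k^\alg$ of connected components with orbits and with irreducible components, since it is precisely here that both halves of \autoref{d:stack-key-lemma} must be used at once; everything else is bookkeeping.
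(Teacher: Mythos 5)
Your proposal is correct and follows exactly the route the paper intends: the corollary is stated without proof as an immediate consequence of the preceding proposition combined with Proposition~\ref{d:stack-key-lemma}, which is precisely the combination you carry out (isomorphism $\pi_0(\mdl U)\to\pi_0(\stk T)$ evaluated at $k^\sep$ for the first claim; irreducibility of geometric connected components plus constancy of the dimension of isomorphism classes within each $\mdl U_b$, then descent of dimension along $k^\alg/k$, for the second).
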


Next, note that we now have an obvious algorithm which, given a diagram as in \autoref{s:groupoid}, computes $\mdl R$ and $\mdl U$; the remainder of this section will be devoted to bounding the complexity of this algorithm.
We will in the following restrict ourselves to the case in which $S_0 = 0$ and $S_\infty = \infty$; the bounds we obtain in this case will also hold in the other cases.

Let $a$ be the type of $\nc{Y}$, say of length $s$, and write $\gamma = \sum_i -a_i$.
Note that by \autoref{d:types-genus}, $\gamma = s - 1 + p_a(\nc{Y})$, where $p_a$ denote the arithmetic genus.
Also note that $\#\Gamma \leq s$, so by \autoref{d:tensor-type-bounds} below, the number of field elements needed to give \autoref{s:groupoid} is polynomial in $s$, $\gamma$, and $\#G$.

First, let us bound the number of possible types $b$ that can occur as the type of an object of $\mdl U$.

\begin{prop}
  The logarithm of the number of $b$ that can occur as the type of an object of $\mdl U$ is $O\pth*[big]{s\#G \log(s\gamma\#G)}$.
\end{prop}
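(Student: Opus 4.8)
The plan is to bound the number $N$ of admissible types $b$ by a crude per-coordinate count, using that $b$ is constrained entrywise from above and in total from below. Since $l$ is perfect and $T$ is a normal, hence smooth, curve over $l$, its geometric fibres over $l$ are reduced, so \autoref{d:types-nonpositive} applies and gives $b_j \le 0$ for every $j$. Recall from the preceding discussion that $t = s\#G$ and $\gamma = \sum_i -a_i$.

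Next I would extract a bound on each individual $|b_j|$. Applying \autoref{d:types-bound} to the $G$-equivariant morphism $T \to \nc{Y_l}$ (using that $T \times_{\bb P^1_l} \bb A^1_l$ is a $G$-torsor over $Y_l$) yields
\[
  \sum_j b_j \ge \#G \sum_i a_i - \tfrac12 t = -\#G\gamma - \tfrac12 s\#G.
\]
Setting $B = \#G\gamma + \tfrac12 s\#G$, this reads $\sum_j (-b_j) \le B$. As each $-b_j$ is a non-negative integer, every coordinate satisfies $0 \le -b_j \le B$, so $b_j$ takes at most $\lfloor B\rfloor + 1 \le B+1$ distinct values. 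Since a type is a length-$t$ sequence considered up to permutation, $N$ is at most the number of such sequences, i.e.~$N \le (B+1)^t$.

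Taking logarithms then gives
\[
  \log N \le t\log(B+1) = s\#G\log\pth*[big]{\#G\gamma + \tfrac12 s\#G + 1}.
\]
To finish I would observe $\#G\gamma + \tfrac12 s\#G + 1 \le (s+1)(\gamma+1)\#G$, whence $\log(B+1) = O\pth*[big]{\log(s\gamma\#G)}$ under the standard convention that the parameters are at least $1$ (so the additive constants are absorbed into the big-oh). This gives exactly the claimed bound $\log N = O\pth*[big]{s\#G\log(s\gamma\#G)}$.

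I expect the argument to be essentially mechanical; the only points requiring care are that the entrywise bound on $|b_j|$ genuinely needs both inputs — the sign constraint of \autoref{d:types-nonpositive} and the total lower bound of \autoref{d:types-bound}, neither sufficing alone — and that the crude estimate $N \le (B+1)^t$ already suffices. In particular there is no need for sharper partition-function asymptotics, since the prefactor $t = s\#G$ permitted by the statement absorbs the per-coordinate count.
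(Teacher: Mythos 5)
Your proposal is correct and follows essentially the same route as the paper: both arguments combine \autoref{d:types-nonpositive} (non-positivity of $b$) with \autoref{d:types-bound} (the bound $\sum_j -b_j \le \#G(\tfrac12 s+\gamma)$) and then make a crude count raised to the power $t=s\#G$ before taking logarithms. The only cosmetic difference is that the paper counts multisets via compositions, bounding the number by $\binom{N+t}{t}\le (N+t)^t$, whereas you bound by the number of ordered sequences $(B+1)^t$; both yield the stated $O\pth*[big]{s\#G\log(s\gamma\#G)}$.
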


\begin{proof}
  For convenience, write $N = \ceil*{\#G\pth*{\tfrac12s + \gamma}}$.

  By \autoref{d:types-nonpositive}, a possible type $b$ must be non-positive.
  By \autoref{d:types-bound}, a possible type $b$ must satisfy $\sum_j -b_j \leq \#G\pth*[big]{\frac12s + \gamma}$.
  Such a type corresponds to a unique tuple $(c_0,\dotsc,c_N)$ of non-negative integers with $\sum_{k=0}^N c_k = t$ and $\sum_{k=0}^N kc_k \leq N$ by setting $c_k$ to be the number of $-b_j$ equal to $k$.
  The number of tuples satisfying the first of these conditions is $\binom{N+t}{t} \leq (N+t)^t$.
\end{proof}

Next, we will bound the size of $\mdl R_b$, i.e.~for the given closed immersion $\mdl R_b \to \bb A^N_k$, the number $N$, the number of polynomials generating the defining ideal, and the degree of these polynomials.
Note that bounds for $\mdl R_b$ will also hold for $\mdl U_b$.
To this end, note that we have the following trivial bound.

\begin{lem}
  Let $a$ and $b$ be finite sequences of non-positive integers, of lengths $s$ and $t$, respectively.
  Then $\dim_k \Hom_{\sh O_{\bb P^1_k}}(\sh O_{\bb P^1_k}(a),\sh O_{\bb P^1_k}(b)) \leq st + t \sum_i -a_i$.
\end{lem}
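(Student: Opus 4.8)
The plan is to reduce the statement immediately to the explicit dimension formula established in \autoref{h:standard-modules} and then apply an elementary termwise estimate exploiting the non-positivity of $b$. First I would recall that a homomorphism $\sh O_{\bb P^1_k}(a) \to \sh O_{\bb P^1_k}(b)$ is the same as an $s \times t$-indexed collection of homomorphisms $\sh O_{\bb P^1_k}(a_i) \to \sh O_{\bb P^1_k}(b_j)$, and that by the identification $\Hom_{\sh O_{\bb P^1_k}}\pth*[big]{\sh O_{\bb P^1_k}(a_i),\sh O_{\bb P^1_k}(b_j)} = k[x,y]_{b_j - a_i}$ of \autoref{h:standard-modules}, each such space has $k$-dimension $\max(b_j - a_i + 1,0)$. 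Summing over all pairs gives
\[
  \dim_k \Hom_{\sh O_{\bb P^1_k}}\pth*[big]{\sh O_{\bb P^1_k}(a),\sh O_{\bb P^1_k}(b)} = \sum_{i=1}^s \sum_{j=1}^t \max(b_j - a_i + 1, 0),
\]
which is precisely $N(b,a)$ in the notation of that section.

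Next I would bound each summand. Since every $b_j$ is non-positive and every $-a_i$ is non-negative, we have $b_j - a_i + 1 \le 1 - a_i = 1 + (-a_i)$, and the right-hand side is already strictly positive, so $\max(b_j - a_i + 1, 0) \le 1 + (-a_i)$. Substituting this into the double sum and noting that the inner index $j$ runs over $t$ values yields
\[
  \sum_{i=1}^s \sum_{j=1}^t \pth*[big]{1 + (-a_i)} = t \sum_{i=1}^s \pth*[big]{1 + (-a_i)} = ts + t \sum_i (-a_i),
\]
which is exactly the claimed bound.

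I expect essentially no obstacle here: this is the ``trivial bound'' advertised in the text, and the only point requiring genuine care is the index bookkeeping, namely keeping straight which of the two sequences indexes the source and which the target, so that the exact dimension is $\sum_{i,j}\max(b_j - a_i + 1, 0)$ and not the reversed expression $\sum_{i,j}\max(a_i - b_j + 1,0)$. Once this orientation is fixed, the non-positivity hypothesis on $b$ is precisely what lets us discard the $b_j$ contribution and collapse each term to $1 + (-a_i)$, after which the estimate is immediate.
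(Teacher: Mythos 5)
Your proof is correct and is exactly the computation the paper has in mind: the paper offers no proof at all, simply calling this a ``trivial bound,'' and your reduction to the dimension formula $\sum_{i,j}\max(b_j-a_i+1,0)$ followed by the termwise estimate $\max(b_j-a_i+1,0)\le 1+(-a_i)$ (using $b_j\le 0$ and $-a_i\ge 0$) is the intended argument, with the index orientation handled correctly.
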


\begin{cor} \label{d:tensor-type-bounds}
  Let $a$ be a finite sequence of non-positive integers, of length $s$.
  Then
  \begin{align*}
    \dim_k \Hom_{\sh O_{\bb P^1_k}}\pth*[big]{\sh O_{\bb P^1_k},\sh O_{\bb P^1_k}(a)} &\leq s + \sum_i -a_i \\
    \dim_k \Hom_{\sh O_{\bb P^1_k}}\pth*[big]{\sh O_{\bb P^1_k}(a),\sh O_{\bb P^1_k}(a)} &\leq s^2 + s\sum_i -a_i \\
    \dim_k \Hom_{\sh O_{\bb P^1_k}}\pth*[big]{\sh O_{\bb P^1_k}(a)^{\otimes 2},\sh O_{\bb P^1_k}(a)} &\leq s^3 + 2s^2\sum_i -a_i \\
    \dim_k \Hom_{\sh O_{\bb P^1_k}}\pth*[big]{\sh O_{\bb P^1_k}(a)^{\otimes 3},\sh O_{\bb P^1_k}(a)} &\leq s^4 + 3s^3\sum_i -a_i.
  \end{align*}
\end{cor}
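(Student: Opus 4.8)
The plan is to deduce all four inequalities from the preceding lemma, whose bound $\dim_k\Hom_{\sh O_{\bb P^1_k}}(\sh O_{\bb P^1_k}(a),\sh O_{\bb P^1_k}(b)) \leq st + t\sum_i -a_i$ already handles arbitrary non-positive types. The only work is to recognise each source module occurring in the corollary as a standard module $\sh O_{\bb P^1_k}(c)$ for a suitable non-positive sequence $c$, to record its length and its total degree $\sum_k -c_k$, and then to feed $(c,a)$ into the lemma.

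The one computation underlying everything is the type of a tensor power of a standard module. Using the standard-module formalism of \autoref{h:standard-modules} together with $\sh O_{\bb P^1_k}(d)\otimes\sh O_{\bb P^1_k}(e)\iso\sh O_{\bb P^1_k}(d+e)$, one has
\[
  \sh O_{\bb P^1_k}(a)^{\otimes m}\iso\bigoplus_{(i_1,\dotsc,i_m)}\sh O_{\bb P^1_k}(a_{i_1}+\dotsb+a_{i_m}),
\]
so its type $c$ has length $s^m$, each entry is a sum of $m$ non-positive integers and is therefore non-positive, and its total degree is
\[
  \sum_k -c_k = \sum_{(i_1,\dotsc,i_m)} -(a_{i_1}+\dotsb+a_{i_m}) = m\,s^{m-1}\sum_i -a_i,
\]
the last equality holding because each of the $m$ coordinate slots ranges over all $s^{m-1}$ completions of the remaining slots and hence contributes $s^{m-1}\sum_i -a_i$.

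With this in hand I would apply the lemma case by case. Taking $c$ to be the type of $\sh O_{\bb P^1_k}(a)^{\otimes m}$ (length $s^m$, total degree $m s^{m-1}\sum_i -a_i$) and the target to be $a$ (length $s$), the lemma yields
\[
  \dim_k\Hom_{\sh O_{\bb P^1_k}}(\sh O_{\bb P^1_k}(a)^{\otimes m},\sh O_{\bb P^1_k}(a)) \leq s^m\cdot s + s\cdot m s^{m-1}\sum_i -a_i = s^{m+1} + m s^m\sum_i -a_i,
\]
which for $m=1,2,3$ is exactly the second, third, and fourth inequalities. For the first line I would instead argue directly that $\dim_k\Hom_{\sh O_{\bb P^1_k}}(\sh O_{\bb P^1_k},\sh O_{\bb P^1_k}(a)) = \dim_k\hl^0(\bb P^1_k,\sh O_{\bb P^1_k}(a)) = \sum_i\max(a_i+1,0)\leq s$, so the (deliberately weaker) stated bound $s+\sum_i -a_i$ holds a fortiori.

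There is no genuine obstacle here: once the lemma is granted, the corollary is pure bookkeeping. The only points that require any care are the combinatorial count producing the factor $m\,s^{m-1}$ in the total degree of the tensor power, and the verification that forming $\sh O_{\bb P^1_k}(a)^{\otimes m}$ really does yield a standard module of non-positive type, so that the hypotheses of the lemma are met.
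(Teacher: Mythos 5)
Your proposal is correct and follows exactly the route the paper intends: the corollary is stated without proof as an immediate consequence of the preceding lemma, and your bookkeeping — identifying $\sh O_{\bb P^1_k}(a)^{\otimes m}$ as a standard module of non-positive type with length $s^m$ and total degree $m s^{m-1}\sum_i -a_i$, then feeding this into the lemma — is precisely the intended argument and checks out in all four cases.
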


Therefore, working out everything, which is straightforward but tedious, gives the following.

\begin{prop} \label{d:groupoid-bounds}
  For the given closed immersion $\mdl R_b \to \bb A^N_k$, we have $N = O\pth*[big]{s^4(\#G)^4\gamma}$, its defining ideal is given by $O\pth*[big]{s^4(\#G)^4\gamma}$ polynomials, which have degree at most $s\#G$.
\end{prop}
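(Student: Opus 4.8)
The plan is to read off, from the explicit description of $\mdl R_b$ assembled in the preceding subsections, every coordinate and every defining relation, to bound each of the three quantities separately, and then to collect terms using $t = s\#G$ (the length of $b$) together with the inequality $\sum_j -b_j \leq \#G(\tfrac12 s + \gamma)$ of \autoref{d:types-bound} and the nonpositivity of $b$ from \autoref{d:types-nonpositive}. All three bounds will then follow from the coefficient counts of \autoref{d:tensor-type-bounds} and from the observation that the only relations of large degree are determinantal.

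First I would bound the ambient dimension $N$. A point of $\mdl R_b$ records the coefficients (in the sense of \autoref{h:standard-modules}) of: the multiplication $\sh O_{\bb P^1}(b)^{\otimes 2} \to \sh O_{\bb P^1}(b)$ and the unit $\sh O_{\bb P^1} \to \sh O_{\bb P^1}(b)$ defining the algebra $\sh O_T$; the $\Gamma$-equivariant morphism $\sh O_{\bb P^1}(a) \to \sh O_{\bb P^1}(b)$ to $\nc{Y}$; the $\#G$ action maps and the $\#\Gamma \leq s$ Galois maps, each an endomorphism of $\sh O_{\bb P^1}(b)$; the basis of $\sh O(T \times 0)$ over $\sh O(\nc{Y} \times 0)$ and, if $S_0 = 0$, the section there; the smoothness data $i, j$ at $\infty$; and finally the coordinates of an isomorphism, which by transport of structure (exactly as in the description of $\mdl R_b(S)$ preceding \autoref{d:groupoid-bounds}) is again an automorphism of $\sh O_{\bb P^1}(b)$ together with an automorphism of the rigidifying module and a copy of the $i, j$ data. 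Each block that is a morphism of standard modules is bounded by \autoref{d:tensor-type-bounds}, while the fibre data at $0$ and the infinitesimal data at $\infty$ are bounded directly by the rank $t$. Substituting $t = s\#G$ and $\sum_j -b_j = O(\#G(s + \gamma))$, the largest block (the multiplication, and the data at $\infty$) is $O(s^4(\#G)^4\gamma)$, and since there are only finitely many blocks, $N = O(s^4(\#G)^4\gamma)$.

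Next I would count the generators of the defining ideal. They encode associativity, unitality and the relevant commutativity of $\sh O_T$; the group-action axioms for $G$ and $\Gamma$ and their equivariance; that $\sh O_{\bb P^1}(a) \to \sh O_{\bb P^1}(b)$ is an algebra map; the basis and section conditions at $0$; the \'etale condition via the discriminant comparison of \autoref{d:etale-discriminant}; the torsor condition at $0$ through \autoref{d:torsor-fibres} and \autoref{d:torsor-locus}; and the smoothness relation $(\omega \oplus i)j = \id$. The most numerous family is associativity, which equates the two maps $\sh O_{\bb P^1}(b)^{\otimes 3} \to \sh O_{\bb P^1}(b)$ and so contributes $\dim \Hom(\sh O_{\bb P^1}(b)^{\otimes 3}, \sh O_{\bb P^1}(b)) \leq t^4 + 3t^3 \sum_j -b_j = O(s^4(\#G)^4\gamma)$ scalar equations; every other family contributes asymptotically fewer (the discriminant comparison, for instance, only equates the $O(\sum_j -b_j)$ coefficients of a section on $\bb P^1$). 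Hence the ideal is generated by $O(s^4(\#G)^4\gamma)$ polynomials.

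Finally I would bound the degrees. The algebra axioms, the action and equivariance relations, the algebra-map condition, and the relation $(\omega \oplus i)j = \id$ are all bilinear or cubic in the coordinates, hence of bounded degree. The only place a large degree enters is through determinants: the discriminant $\Delta_{T/\bb P^1}$ is the determinant of the $t \times t$ trace form, whose entries $\Tr(e_i e_{i'})$ are linear in the multiplication constants, so each of its coefficients is a polynomial of degree $t = s\#G$; and the torsor condition at $0$ is the invertibility of a transition matrix whose ambient $l$-size is again at most $t$, which after adjoining an auxiliary inverse becomes an equation of degree $\leq t$. Thus every generator has degree at most $s\#G$. I expect this last step to be the main obstacle: the bookkeeping of coordinates and of the low-degree relations is, as the statement advertises, routine, but one must check carefully that the determinantal conditions---above all the discriminant and the torsor comparison at $0$---genuinely reduce to determinants of size at most $t$, so that no relation of degree exceeding $s\#G$ is introduced.
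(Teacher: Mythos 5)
Your proposal is correct in outline and follows exactly the route the paper intends: the paper offers no written proof beyond ``working out everything, which is straightforward but tedious'', and your write-up is precisely that working-out, using \autoref{d:tensor-type-bounds}, the identity $t = s\#G$, and the bound $\sum_j -b_j \leq \#G(\tfrac12 s + \gamma)$ from \autoref{d:types-bound} to bound each block of coordinates and each family of relations, with associativity and the data at $\infty$ correctly identified as the dominant contributions and the determinantal relations as the only source of large degree.

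One detail in your degree bound needs fixing. The entries $\Tr_f(e_ie_{i'})$ of the trace form are \emph{quadratic}, not linear, in the structure constants: writing $e_ie_{i'} = \sum_j \mu_{jii'}e_j$ one gets $\Tr_f(e_ie_{i'}) = \sum_j \mu_{jii'}\Tr_f(e_j)$ with $\Tr_f(e_j) = \sum_m \mu_{mjm}$ itself linear in $\mu$. Consequently the determinant of the $t\times t$ trace form has degree $2t = 2s\#G$ in the ambient coordinates as you have set them up, exceeding the stated bound $s\#G$ by a factor of $2$. This is immaterial for everything the proposition is used for (the output-size and complexity estimates only require degree $O(s\#G)$, and $\binom{N+2d}{2d}$ is still $\expt(O(s\#G\log(s\gamma\#G)))$), and it can be repaired to meet the literal bound by adjoining the trace-form entries as auxiliary coordinates cut out by quadratic relations, which does not disturb the counts of variables or generators. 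Apart from this slip, the argument is sound.
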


Note that a polynomial ring in $N$ variables has $\binom{N+d}{d}$ monomials of degree at most $d$; so by the proposition above, we see that the size of the output is
\[
  \expt\pth*[Big]{O\pth*[big]{s\#G \log(s\gamma \#G)}}.
\]

Now that we have bounds for the sizes of $\mdl R$ and $\mdl U$, we now turn to the degrees of the defining polynomials of the morphisms defining the structure of a groupoid scheme.

Recall for this that points of $\mdl R_b$ are given by two objects of $\mdl U_b$, together with an $\sh O_{\bb P^1}$-linear map connecting the two objects.
So the source and target maps $\mdl R_b \to \mdl U_b$ are induced by projections between their ambient affine spaces.
Therefore the affine $k$-scheme $\mdl R_b \times_{\mdl U_b} \mdl R_b$ of finite type is given by $O\pth*[big]{s^4(\#G)^4\gamma}$ variables, $O\pth*[big]{s^4(\#G)^4\gamma}$ relations of degree at most $s\#G$.
Moreover, the composition map $\mdl R_b \times_{\mdl U_b} \mdl R_b \to \mdl R_b$ forgets the middle object and composes the two $\sh O_{\bb P^1}$-linear maps, so it is given by polynomials of degree at most $2$.

\begin{thm}
  The obvious algorithm computes the groupoid scheme $\mdl R \rightrightarrows \mdl U$ given \autoref{s:groupoid} as input, and has arithmetic complexity 
  \[
    \expt\pth*[Big]{O\pth*[big]{s\#G\log(s\gamma\#G)}}.
  \]
\end{thm}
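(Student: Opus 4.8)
The plan is to bound the arithmetic complexity as the product of two factors, namely the number of types $b$ over which the algorithm iterates, and the cost of constructing $\mdl R_b$, $\mdl U_b$, and the structure morphisms for a single fixed $b$; both factors turn out to be $\expt\pth*[Big]{O\pth*[big]{s\#G\log(s\gamma\#G)}}$, and a product of two bounds of this form is again of this form. First I would recall that the number of admissible types $b$ is $\expt\pth*[Big]{O\pth*[big]{s\#G\log(s\gamma\#G)}}$ by the preceding bound on the number of types, and that by \autoref{d:groupoid-bounds} each $\mdl R_b$ sits in $\bb A^N_k$ with $N = O\pth*[big]{s^4(\#G)^4\gamma}$, cut out by $O\pth*[big]{s^4(\#G)^4\gamma}$ polynomials of degree at most $s\#G$, the same bounds holding for $\mdl U_b$. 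Since the number of monomials of degree at most $d$ in $N$ variables is $\binom{N+d}{d} \leq (N+d)^d$, taking $d = s\#G$ shows that each such polynomial carries at most $\expt\pth*[Big]{O\pth*[big]{s\#G\log(s\gamma\#G)}}$ coefficients, so the entire data describing one type has size $\expt\pth*[Big]{O\pth*[big]{s\#G\log(s\gamma\#G)}}$.

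Next I would check that producing this data for a single type costs no more than a polynomial in its size. Constructing the objects amounts to writing the standard modules of types $a$ and $b$, together with the morphisms between them and their fibres at $0$ and $\infty^{(2)}$, as matrices of polynomials as in \autoref{h:standard-modules}, and then translating each defining condition into polynomial relations: the torsor condition through the rank of the map $G \times Y \to Y \times_X Y$ together with the discriminant criterion of \autoref{d:etale-discriminant} (a determinant of the $t \times t$ trace form), and smoothness at $\infty$ through the canonical presentation $\omega$ and auxiliary morphisms $i,j$ with $(\omega \oplus i)j = \id$. Each of these reduces to polynomial arithmetic and the linear-algebraic operations (matrix products, determinants) which, as recorded in \autoref{h:computation}, have cost polynomial in the size of their operands; the composition map contributes only polynomials of degree at most $2$. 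As any polynomial function of $\expt\pth*[Big]{O\pth*[big]{s\#G\log(s\gamma\#G)}}$ is again of the same order, the per-type cost is $\expt\pth*[Big]{O\pth*[big]{s\#G\log(s\gamma\#G)}}$.

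Multiplying the number of types by the per-type cost then yields the stated bound, using $\expt(X)\cdot\expt(X) = \expt(2X) = \expt\pth*[big]{O(X)}$. I expect the main difficulty to lie in the second step: one must verify that translating the discriminant and the smoothness-at-infinity conditions into explicit equations genuinely respects the degree bound $s\#G$ and the bound of \autoref{d:groupoid-bounds} on the number of relations, so that the per-type work stays polynomial in the per-type output rather than in the exponentially larger number of types. Granting the structural estimates already established, what remains is careful bookkeeping across the iteration over types, which is routine.
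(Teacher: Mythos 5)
Your proposal is correct and follows essentially the same route as the paper: the paper's proof simply observes that the output size is already bounded by $\expt\pth*[big]{O(s\#G\log(s\gamma\#G))}$ (from the count of types and \autoref{d:groupoid-bounds}) and that every individual coefficient is computable in arithmetic complexity polynomial in $s$, $\gamma$, $\#G$, which is exactly your decomposition into (number of types) times (per-type cost) with the per-type work polynomial in the per-type output. The bookkeeping you flag as the remaining difficulty is the same "straightforward but tedious" verification the paper defers to \autoref{d:groupoid-bounds}, so no new gap is introduced.
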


\begin{proof}
  Simply note that every individual coefficient can be computed in arithmetic complexity bounded by a fixed polynomial in $s,\gamma,\#G$.
\end{proof}

\section{Geometric points and first cohomology} \label{h:points-torsors}

Next, we will use the groupoid scheme $\mdl R \rightrightarrows \mdl U$ to compute $\hl^1(X_{\et},j_!\sh G)$.
We will do this in a slightly more general situation, namely the following (compare with the conditions of \autoref{d:stack-key-lemma}).

\begin{situ} \label{s:points-torsors}
  Let $k$ be a field, let $\mdl R \rightrightarrows \mdl U$ be a groupoid scheme in which the morphisms $\mdl R \to \mdl U$ are smooth with geometrically irreducible fibres, and with $\mdl R, \mdl U$ affine and equidimensional, given by at most $r$ polynomials -- which are of degree at most $d$ -- in at most $N$ variables.
  Let $\stk T$ be a stack on $(\Sch/k)_{\fppf}$ with representable and finite \'etale diagonal, and let $[\mdl U/\mdl R] \to \stk T$ be a morphism such that $p^{-1}p_* [\mdl U_{k^{\perf}}/\mdl R_{k^{\perf}}] \to p^{-1}p_* \stk T_{k^{\perf}}$ is an equivalence.
  Here, for any field $k$, $p \colon (\Sch/k)_{\fppf} \to (\Spec k)_{\et}$ denotes the change-of-site morphism for which $p_*$ is the restriction.
\end{situ}

Let us, for a finite reduced $k$-algebra $A$, denote by $A^\dagger$ the separable closure of $k$ in $A$.
Moreover, if $A$ is a finite product $\prod_i k_i$ of fields, denote by $A^{\perf}$ the product $\prod_i k_i^{\perf}$.
Suppose we are in \autoref{s:points-torsors}.
Then to any morphism $x \colon \Spec l \to \mdl U$ with $l/k$ finite, we can attach an induced morphism $\Spec l^{\perf} \to \mdl U$.
This in turn induces a morphism $\Spec l^{\perf} \to p^{-1}p_*\stk T_{k^{\perf}}$, which is \'etale as both $\Spec l^{\perf}$ and $p^{-1}p_*\stk T_{k^{\perf}}$ are \'etale over $\Spec k^{\perf}$.
We hence get a morphism $\Spec l^\dagger \to p^{-1}p_*\stk T$.

We prove a couple of lemmas regarding this construction.

\begin{lem} \label{d:points-cover}
  In \autoref{s:points-torsors}, let $\set*{x_i \colon \Spec l_i \to \mdl U}$ be a family of points on $\mdl U$.
  Then the image of $\coprod_i \Spec l_i \to \mdl U$ intersects every geometric connected component of $\mdl U$ if and only if $\coprod_i \Spec l_i^\dagger \to p^{-1}p_*\stk T$ is surjective.
\end{lem}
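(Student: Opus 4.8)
The plan is to transport the condition on $\mdl U$ across the bijection $\pi_0(\mdl U) \xrightarrow{\sim} \pi_0(\stk T)$ supplied by \autoref{d:stack-key-lemma}, turning ``intersects every geometric connected component'' into a statement about geometric points of the \'etale $k$-scheme $p^{-1}p_*\stk T$. First I would set up the geometry over $k^{\perf}$. Base-changing the whole situation to $k^{\perf}$ puts us in the setting of \autoref{d:stack-key-lemma}: the groupoid morphisms remain smooth with geometrically irreducible fibres, and $p^{-1}p_*[\mdl U_{k^{\perf}}/\mdl R_{k^{\perf}}] \to p^{-1}p_*\stk T_{k^{\perf}}$ is an equivalence by hypothesis. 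Writing $\mathcal C = p^{-1}p_*\stk T$, that lemma yields a $\Gal(k^{\alg}/k^{\perf})$-equivariant bijection between the geometric connected components of $\mdl U$ and the geometric points $\mathcal C(k^{\alg})$, induced by the composite $\mdl U \to \stk T \to \mathcal C$. Since $k^{\perf}/k$ is purely inseparable, $\pi_0$ is unchanged and $\Gal(k^{\alg}/k^{\perf}) = \Gal(k^{\sep}/k)$, so this is the bijection I will use over $k$; moreover, the components being geometrically irreducible, ``geometric connected component'' may be read over either $k^{\sep}$ or $k^{\alg}$.

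Second, since $\mathcal C$ is \'etale over $k$, hence a disjoint union of spectra of finite separable extensions, surjectivity of $\coprod_i \Spec l_i^\dagger \to \mathcal C$ is fppf-local on the base and may be checked after the faithfully flat base change $k^{\alg}/k$; it is therefore equivalent to surjectivity on $k^{\alg}$-points. So it suffices to match, for each index $i$, the geometric connected components of $\mdl U$ met by $x_i$ with the geometric points of $\mathcal C$ met by $\Spec l_i^\dagger \to \mathcal C$.

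Third, I would carry out this matching. Let $P_i$ be the (closed) image point of $x_i$, with residue field $\kappa_i \subseteq l_i$, both finite over $k$. The geometric connected components met by $x_i$ are exactly those lying over $P_i$, i.e.~those containing a point of the fibre $\Spec(\kappa_i \otimes_k k^{\alg})$. I would then replace $x_i$ by the composite $\Spec l_i^{\perf} \to \Spec l_i \xrightarrow{x_i} \mdl U$ used in the construction preceding the lemma: its image is again $P_i$, and because the restriction map $\Hom_k(l_i^\dagger, k^{\alg}) \to \Hom_k(\kappa_i^\dagger, k^{\alg})$ on embeddings is surjective, the $k^{\alg}$-points of $\Spec l_i^{\perf}$ already hit every point of the fibre over $P_i$, hence every component over $P_i$. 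Thus $\Spec l_i^{\perf} \to \mdl U$ meets precisely the components met by $x_i$. Finally, the composite $\mdl U_{k^{\perf}} \to \mathcal C_{k^{\perf}}$ realises the bijection of the first step on geometric points, so it sends each $k^{\alg}$-point of $\Spec l_i^{\perf}$ to the geometric point of $\mathcal C$ corresponding to its component; under the identification of \'etale $k^{\perf}$-schemes with \'etale $k$-schemes via $\Gal(k^{\alg}/k^{\perf}) = \Gal(k^{\sep}/k)$ (which is exactly how $\Spec l_i^{\perf} \to \mathcal C_{k^{\perf}}$ was descended to $\Spec l_i^\dagger \to \mathcal C$), these are precisely the geometric points of $\mathcal C$ met by $\Spec l_i^\dagger$. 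Taking the union over $i$ then gives ``meets every geometric component of $\mdl U$'' $\iff$ ``hits every $k^{\alg}$-point of $\mathcal C$'' $\iff$ ``surjective'', as desired.

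The main obstacle will be the inseparability bookkeeping in the third step: $\kappa_i^\dagger$, $l_i^\dagger$ and $l_i^{\perf}$ generally have different degrees, so the two families have different numbers of geometric points, and one must check that the sets of components and points they hit nevertheless coincide. The surjectivity of the restriction map on field embeddings, together with the fact that $\Spec l_i^{\perf} \to \mathcal C_{k^{\perf}}$ was defined exactly so as to descend to $\Spec l_i^\dagger$, is what makes the two sets match on the nose.
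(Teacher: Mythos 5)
Your proposal is correct and follows essentially the same route as the paper's (very terse) proof: pass to $\Spec l_i^{\perf}$ without changing which geometric components are met, invoke the $\pi_0(\mdl U)\to\pi_0(\stk T)$ bijection from \autoref{d:stack-key-lemma} over $k^{\perf}$, and descend along the purely inseparable extension to identify surjectivity onto $p^{-1}p_*\stk T_{k^{\perf}}$ with surjectivity of $\coprod_i \Spec l_i^\dagger \to p^{-1}p_*\stk T$. You simply make explicit the embedding-counting bookkeeping that the paper leaves implicit.
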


\begin{proof}
  We note that the image of $\coprod_i \Spec l_i \to \mdl U$ intersects every geometric connected component if and only if the image of $\coprod_i \Spec l_i^{\perf} \to \mdl U$ does so.
  This is equivalent to $\coprod_i \Spec l_i^{\perf} \to p^{-1}p_*\stk T_{k^{\perf}}$ being surjective, i.e.~to $\coprod_i \Spec l_i^\dagger \to p^{-1}p_* \stk T$ being surjective.
\end{proof}

\begin{lem} \label{d:points-intersection}
  In \autoref{s:points-torsors}, let $x \colon \Spec l \to \mdl U$ and $y \colon \Spec m \to \mdl U$ be two points on $\mdl U$.
  Let $A$ be the coordinate ring of $\alpha^{-1}x \times_{\mdl R} \omega^{-1}y$.
  Then $A^\dagger$ is the coordinate ring of $\Spec l \times_{p^{-1}p_*\mdl T} \Spec m$.
\end{lem}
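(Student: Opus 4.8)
The plan is to identify $\Spec A$ with the sheaf of isomorphisms between $x$ and $y$ in the groupoid, and then to compare it, via the given equivalence, with the corresponding sheaf of isomorphisms in $p^{-1}p_*\stk T$. Writing $(\alpha,\omega)\colon \mdl R \to \mdl U \times_k \mdl U$, one has
\[
  \alpha^{-1}x \times_{\mdl R} \omega^{-1}y = \mdl R \times_{(\alpha,\omega),\,\mdl U\times_k\mdl U}(\Spec l \times_k \Spec m),
\]
so that $\Spec A$ represents the Isom-sheaf $\Isom_{[\mdl U/\mdl R]}(x,y)$ of the prestack quotient, over $\Spec(l\otimes_k m)$. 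First I would record that $A$ is reduced: since $\alpha$ is smooth with geometrically irreducible fibres, the groupoid translation identifies $\omega$ restricted to the fibre $\alpha^{-1}x$ with a base change of $\omega$, so $\Spec A \to \Spec m$ is smooth and $A$ is geometrically reduced.

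Next I would produce the comparison map. The morphism $[\mdl U/\mdl R]\to\stk T$ sends $x,y$ to objects $\bar x,\bar y$ and induces $\Spec A = \Isom_{[\mdl U/\mdl R]}(x,y) \to \Isom_{\stk T}(\bar x,\bar y)$. Restricting to the small \'etale site and applying $p^{-1}$ (after the harmless purely inseparable base change, so that $x,y$ factor through the separable closures $\Spec l^\dagger,\Spec m^\dagger$ as in the construction preceding the lemma), the equivalence $p^{-1}p_*[\mdl U/\mdl R]\simeq p^{-1}p_*\stk T$ identifies the target with $E := \Isom_{p^{-1}p_*\stk T}(\bar x,\bar y) = \Spec l \times_{p^{-1}p_*\stk T}\Spec m$. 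Because $\stk T$ has representable finite \'etale diagonal, $E$ is finite \'etale over $\Spec k$; in particular $\sh O(E)$ is a finite separable $k$-algebra, so $\sh O(E)\subseteq A^\dagger$ inside $A$.

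The heart of the argument is to show that the comparison map $\Spec A \to E$ is surjective with geometrically connected fibres. Over a geometric point, a point of $E$ is an isomorphism $\bar x\to\bar y$ in $p^{-1}p_*\stk T$; since geometric points are separable, the equivalence lets one lift it to an isomorphism in $[\mdl U/\mdl R]$, i.e.~to a point of $\Spec A$ above it, giving surjectivity. The fibre over such a point is a torsor under the kernel of $\Aut_{[\mdl U/\mdl R]}(\bar x)\to\Aut_{\stk T}(\bar x)$; here $\Aut_{\stk T}(\bar x)$ is finite (finite \'etale diagonal), while the geometric stabiliser $\Aut_{[\mdl U/\mdl R]}(\bar x)=\Isom_{\mdl R}(x,x)$ is smooth, so the kernel is its identity component, which is smooth and geometrically connected. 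Hence each fibre is a torsor under a smooth connected group, so is geometrically irreducible, and $\Spec A \to E$ is a surjection with geometrically connected fibres onto a $k$-scheme finite \'etale over $k$.

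Finally, this forces $\sh O(E)=A^\dagger$: both $\Spec A \to \Spec A^\dagger$ (the separable $\pi_0$ over $k$, whose fibres are geometrically connected as $A$ is geometrically reduced) and $\Spec A\to E$ are surjective with geometrically connected fibres onto finite \'etale $k$-schemes and are compatible over $k$; any two such factorisations coincide, so the inclusion $\sh O(E)\subseteq A^\dagger$ is an equality. Thus $A^\dagger$ is the coordinate ring of $E=\Spec l\times_{p^{-1}p_*\stk T}\Spec m$. The main obstacle I expect is the middle step: correctly identifying the fibres of $\Spec A\to E$ as torsors under the connected ``rigidifying'' automorphism group, and tracking the base-field changes ($l,m$ versus $l^\dagger,m^\dagger$ and $k^{\perf}$) so that the finite \'etale sheaf $E$ really lives over $k$. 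Granting the smoothness and geometric irreducibility of $\alpha,\omega$ from \autoref{s:points-torsors}, it is precisely the connectedness of these fibres that makes $A^\dagger$, rather than all of $A$, appear.
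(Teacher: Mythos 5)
The central step of your argument is incorrect, and it inverts what the lemma is actually about. You treat $\Spec A = \alpha^{-1}x \times_{\mdl R} \omega^{-1}y$ as a positive-dimensional scheme fibred over the finite \'etale Isom-scheme $E$ in torsors under the identity component of a smooth stabiliser $\Isom_{\mdl R}(x,x)$, and you locate the content of the lemma in the geometric connectedness of those fibres. But $\alpha^{-1}x \times_{\mdl R} \omega^{-1}y$ is the transporter from $x$ to $y$ in the prestack quotient (as you yourself write), and the hypothesis of \autoref{s:points-torsors} that $[\mdl U/\mdl R](\Omega) \to \stk T(\Omega)$ is an equivalence for every perfect field extension $\Omega$ forces its $\Omega$-points to coincide with the finite set $\Isom_{\stk T}(\bar x,\bar y)(\Omega)$; hence $\Spec A$ is quasi-finite and $A$ is a \emph{finite} $k$-algebra (which is also what makes $A^\dagger$ defined and \autoref{a:points-isom} a zero-dimensional computation). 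In particular the stabiliser $\Isom_{\mdl R}(x,x)$ is finite, not smooth of positive dimension: you have conflated it with the fibre $\alpha^{-1}(x)$ of $\alpha$, which is indeed smooth and irreducible but parametrises pairs (new rigidifying data, isomorphism of underlying torsors), not automorphisms of the fixed object $x$. Had the stabilisers been positive-dimensional, $[\mdl U/\mdl R](\Omega) \to \stk T(\Omega)$ could not be faithful; and even for a smooth group, the kernel of a homomorphism to a finite group need not be the identity component. The geometric irreducibility of the fibres of $\alpha$ and $\omega$ is what drives \autoref{d:stack-key-lemma} and \autoref{d:points-cover}; it plays no role in this lemma.

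What the lemma does require --- and what the paper's proof consists of --- is precisely the inseparability bookkeeping you set aside as harmless. The equivalence is assumed only over perfect fields, so the finite $k$-algebra $A$ need not be \'etale; that is the sole reason $A^\dagger$ rather than $A$ appears. The paper base-changes $x$ and $y$ to $\Spec l^{\perf}$ and $\Spec m^{\perf}$, where the equivalence identifies $\Spec A^{\perf}$ with $\Spec l^{\perf} \times_{p^{-1}p_*\stk T_{k^{\perf}}} \Spec m^{\perf}$, and then descends: $A^\dagger$ and the coordinate ring of $\Spec l^\dagger \times_{p^{-1}p_*\stk T} \Spec m^\dagger$ are both the unique finite \'etale $k$-subalgebra of $A^{\perf}$ whose base change to $k^{\perf}$ is $A^{\perf}$. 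Your closing uniqueness argument could be adapted to this, but only after discarding the picture of connected positive-dimensional fibres and first establishing the finiteness of $A$; note also that your claim that $A$ is geometrically reduced is unjustified and, combined with finiteness, would make the passage to $A^\dagger$ vacuous.
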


\begin{proof}
  Let $x' \colon \Spec l^{\perf} \to \mdl U$ and $y' \colon \Spec m^{\perf} \to \mdl U$.
  Then $A^{\perf}$ is the coordinate ring of $\alpha^{-1}x' \times_{\mdl R} \omega^{-1}y' = \Spec l^{\perf} \times_{p^{-1}p_*\stk T_{k^{\perf}}} \Spec m^{\perf}$, so $A^\dagger$ is the coordinate ring of $\Spec l^\dagger \times_{p^{-1}p_*\stk T} \Spec m^\dagger$, being the unique finite $k$-subalgebra of $A^{\perf}$ of which the base change to $k^{\perf}$ is $A^{\perf}$.
\end{proof}

So in \autoref{s:points-torsors}, by finding enough points on $\mdl U$, one can construct a presentation of the stack $p^{-1}p_* \stk T$, which then can be used to compute $\pi_0$ of this.
Let us do so explicitly below.

\begin{prop} \label{d:points-base}
  \autoref{a:points-base} takes as input \autoref{s:points-torsors} and computes a finite set $X$ of morphisms $x_i \colon \Spec l_i \to \mdl U$ with $l_i/k$ finite, such that the induced map $\coprod_i \Spec l_i \to \pi_0(p_* \stk T)$ is surjective.
  Moreover, it does so in arithmetic complexity
  \[
    \expt\pth*[Big]{O\pth*[big]{ N^2 \log(d), e N \log(d), \log(r) }}
  \]
\end{prop}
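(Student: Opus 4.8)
The plan is to reduce the statement to a geometric search on $\mdl U$ and then to carry out that search by effective elimination. By \autoref{d:points-cover}, the map $\coprod_i \Spec l_i \to \pi_0(p_* \stk T)$ is surjective if and only if the image of $\coprod_i \Spec l_i \to \mdl U$ meets every geometric connected component of $\mdl U$ (each $x_i$ factors through $\Spec l_i^\dagger$, so this is exactly the surjectivity of $\coprod_i \Spec l_i^\dagger \to p^{-1}p_* \stk T$ appearing in that lemma). By \autoref{d:stack-key-lemma} these geometric connected components are irreducible, and by hypothesis $\mdl U$ is equidimensional, say of dimension $\delta$. Hence it suffices to produce a finite family of closed points $x_i \colon \Spec l_i \to \mdl U$, with $l_i/k$ finite, whose images together meet every irreducible component of $\mdl U_{k^\sep}$.

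To find such points I would use a generic projection. Using effective elimination valid over an arbitrary field (see \cite{chistov86}), first compute $\delta = \dim \mdl U$ and a finite, $k$-rational morphism $\pi \colon \mdl U \to \bb A^\delta_k$ realising $\mdl U$ as a Noether normalisation. Then form the fibre $\pi^{-1}(0)$ over the origin; this is $\Spec A$ for a finite $k$-algebra $A$ whose dimension is bounded by the B\'ezout number $d^{O(N)}$. Finally, decompose $A$ into its local factors by computing its primary decomposition and factoring the associated minimal polynomials with the black box $F$; each factor of $A$ determines a residue field $l_i$ finite over $k$ together with a point $x_i \colon \Spec l_i \to \mdl U$. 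These $x_i$ form the output set $X$.

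Correctness follows once $\pi$ is finite. Let $C$ be any irreducible component of $\mdl U_{k^\sep}$. Base-changing $\pi$ along $k^\sep/k$, the induced morphism $C \to \bb A^\delta_{k^\sep}$ is finite with $C$ of dimension $\delta$, so its image is a closed subset of the irreducible $\bb A^\delta_{k^\sep}$ of dimension $\delta$, hence all of $\bb A^\delta_{k^\sep}$; in particular the geometric fibre of $\pi$ over the origin meets $C$. As this geometric fibre is the base change of $\pi^{-1}(0)$, whose points over the $l_i$ are exactly the $x_i$, the image of $\coprod_i \Spec l_i \to \mdl U$ meets every geometric connected component of $\mdl U$, and the claim follows from the reduction of the first paragraph via \autoref{d:points-cover}.

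For the complexity, the dominant contribution is the elimination used to compute $\delta$, to produce the normalisation $\pi$, and to present the zero-dimensional fibre $\pi^{-1}(0)$; on $\leq r$ polynomials of degree $\leq d$ in $\leq N$ variables the effective bounds of \cite{chistov86} run in arithmetic complexity $\expt(O(N^2 \log d))$, with the dependence on $r$ only logarithmic, accounting for the $\log r$ term. Decomposing the finite algebra $A$ and extracting the points then costs a fixed power of $[A:k]^{e+1} \leq d^{O(N(e+1))}$, by the primary-decomposition and factorisation bounds recalled in \autoref{h:computation}, where the exponent $e+1$ rather than $1$ reflects the imperfectness of $k$; this yields the $\expt(O(eN \log d))$ term. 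Combining these (and using the convention $O(x,y,z) = O(\max(x,y,z))$) gives the stated complexity $\expt(O(N^2 \log d, eN \log d, \log r))$. The main obstacle is the elimination step: one must obtain a genuinely single-exponential $\expt(O(N^2 \log d))$ bound for the Noether normalisation and the zero-dimensional solving, whereas a naive Gr\"obner-basis computation would be double-exponential in $N$; it is exactly for this that the effective elimination results of \cite{chistov86} are invoked.
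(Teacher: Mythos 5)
Your proposal is correct and follows essentially the same route as the paper: a Noether normalisation $\mdl U \to \bb A^{\dim \mdl U}$, the zero-dimensional fibre over the origin, and a primary decomposition of its coordinate ring, with correctness resting on equidimensionality forcing every geometric component to surject onto the affine space, and with the same three-way split of the complexity (single-exponential elimination for the $N^2\log d$ and $\log r$ terms, decomposition over the imperfect base for the $eN\log d$ term). The only cosmetic difference is that the paper invokes \cite{dickensteinetal91} rather than \cite{chistov86} for the Noether normalisation and the zero-dimensional Gr\"obner basis step.
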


\begin{algo} \label{a:points-base}
  Compute a Noether normalisation $\nu \colon \mdl U \to \bb A^{\dim \mdl U}$ using e.g.~\cite[Sec.~1]{dickensteinetal91}.
  Note that this also works for finite fields, but only after a base change to a finite field extension;  so for finite fields, one needs to keep track of the Galois action as well.

  Then set $R = \sh O\pth*[big]{\nu^{-1}(0)}$.
  Compute a $k$-basis for $R$ using a Gr\"obner basis computation for the ideal defining $R$.
  Compute the primary decomposition of $R$ and for each local factor $S$ of $R$, compute the composition of $\sh O(\mdl U) \to R$, $R \to S$, and $S \to S^{\red}$.
\end{algo}

\begin{proof}
  First note that, as $\mdl U$ is equidimensional, every geometric connected component maps surjectively to $\bb A^{\dim \mdl U}$.
  Hence $R$ is the ring of global sections of a closed subscheme of $\mdl U$ that intersects every geometric connected component, so this procedure indeed computes a set $X$ as desired.
  It remains to prove the claims on the arithmetic complexity.

  By \cite[Sec.~1; Sec.~3]{dickensteinetal91} (for Noether normalisation and the zero-dimensional Gr\"obner basis computation, respectively), $R$ can be computed as finite $k$-algebras in arithmetic complexity
  \[
    \expt\pth*[Big]{O\pth*[big]{ N^2 \log(d), \log(r) }}.
  \]

  Let us bound the $k$-vector space dimension of the $R$.
  First note that $\mdl U \leq N$.
  Therefore $R$ is given by at most $N$ generators, and by relations that are of degree at most $d$.
  Hence 
  \[
    \dim_k R = \expt\pth*[Big]{O\pth*[big]{ N \log(d) }}.
  \]
  So by the methods of \cite[Sec.~7]{khurimakdisi04}, we see that the primary decomposition of $R$ can be computed in arithmetic complexity
  \[
    \expt\pth*[Big]{O\pth*[big]{ N \log(d) }}.
  \]
  
  Moreover, for each of the local factors, the degree of the purely inseparable extension $l/k$ to be taken doesn't exceed $(\dim_k R)^e$, as the degree of the polynomials to be factored doesn't exceed $\dim_k R$.
  It then follows by that the maps $R \to S^{\red}$ can be computed in arithmetic complexity
  \[
    \expt\pth*[Big]{O\pth*[big]{ (e+1)N\log(d) }}.
  \]
  as $\dim_k (S \otimes_k l) \leq (\dim_k R)^{e+1}$.
\end{proof}

\begin{prop} \label{d:points-isom}
  \autoref{a:points-isom} takes as input \autoref{s:points-torsors}, $x \colon \Spec l \to \mdl U$, and $y \colon \Spec m \to \mdl U$ and computes the finite $k$-scheme $\alpha^{-1} x \times_{\mdl R} \omega^{-1} y$ in arithmetic complexity
  \[
    \expt\pth*[Big]{O\pth*[big]{ N_{x,y}^2 \log(d_{x,y}), \log(r) }},
  \]
  where $N_{x,y} = \max(N,\log [l:k],\log [m:k])$ and $d_{x,y} = \max(d,[l:k],[m:k])$.
\end{prop}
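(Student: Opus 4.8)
The plan is to present $\alpha^{-1}x \times_{\mdl R} \omega^{-1}y$ as an explicit fibre product of affine schemes over $k$ and to bound the cost of the ensuing zero-dimensional Gr\"obner basis computation; the one place where care is genuinely needed is in presenting the residue fields $l$ and $m$ economically. First I would identify the coordinate ring. Writing $\sh O(\mdl R) = k[z_1,\dotsc,z_{N_R}]/I_R$ with $N_R \le N$, and letting $\alpha^*,\omega^* \colon \sh O(\mdl U) \to \sh O(\mdl R)$ denote the two structure maps (sending each coordinate $w_i$ of $\mdl U$ to a polynomial of degree $\le d$), the fibre product has coordinate ring
\[
  A = l \otimes_{x,\sh O(\mdl U),\alpha^*} \sh O(\mdl R) \otimes_{\omega^*,\sh O(\mdl U),y} m,
\]
so that, once $l$ and $m$ are presented over $k$, the algebra $A$ is cut out inside $(l \otimes_k m)[z]$ by $I_R$ together with the relations $\alpha^*(w_i) - x_i$ and $\omega^*(w_i) - y_i$ expressing that the images of $x$ and $y$ agree with the tautological point of $\mdl R$. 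That $A$ is finite over $k$, so that the output is a genuine finite $k$-algebra, need not be checked by the algorithm: it follows from \autoref{d:points-intersection}, since $A^{\perf}$ is the coordinate ring of a finite \'etale $k^{\perf}$-scheme.

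The decisive step, and the one on which the stated exponent turns, is to present $l$ (and likewise $m$) over $k$ using only $O(\log[l:k])$ generators rather than a full $k$-basis; the naive presentation by the multiplication table would introduce $[l:k]$ variables and inflate the final bound to something doubly exponential in $\log d_{x,y}$. I would choose generators greedily: as $l$ is a field, any element outside the subalgebra generated by the elements chosen so far at least doubles its $k$-dimension, so after $a = O(\log[l:k])$ steps, each a linear-algebra membership test and hence polynomial in $[l:k]$, one obtains generators $\theta_1,\dotsc,\theta_a$. Computing by linear algebra the minimal polynomial of each $\theta_i$ over $k(\theta_1,\dotsc,\theta_{i-1})$ yields a triangular system of $a$ relations, whose total degrees are bounded by $\sum_i [k(\theta_1,\dotsc,\theta_i):k(\theta_1,\dotsc,\theta_{i-1})] \le [l:k]$ (a sum of integers $\ge 2$ never exceeds their product). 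This gives a presentation $l = k[s_1,\dotsc,s_a]/J_l$ with $a = O(\log[l:k])$ generators and relations of degree $O([l:k])$, in arithmetic complexity polynomial in $[l:k]$; since no separable closure or polynomial factorisation is needed, no factor of $e$ enters, matching the absence of $e$ in the claimed bound.

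Writing $l \otimes_k m = k[s,t]/(J_l,J_m)$ then presents $A$ as a quotient of $k[s,t,z]$ in $a + a' + N_R = O(N_{x,y})$ variables by $O(r + N_{x,y})$ polynomials of degree $O(d_{x,y})$. Finally I would run the zero-dimensional Gr\"obner basis machinery of \cite{dickensteinetal91}, exactly as in \autoref{d:points-base}, to read off a $k$-basis and multiplication table of $A$; its cost of $\expt\pth*[big]{O(n^2\log\delta,\log\rho)}$ in $n = O(N_{x,y})$ variables, degree $\delta = O(d_{x,y})$, and $\rho = O(r + N_{x,y})$ generators is
\[
  \expt\pth*[Big]{O\pth*[big]{N_{x,y}^2\log(d_{x,y}),\log(r)}},
\]
the $\log N_{x,y}$ term being absorbed into the main one, and the preprocessing of $l$ and $m$ (polynomial in $d_{x,y}$) being dominated. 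The main obstacle is thus entirely the economical presentation of $l$ and $m$; everything else is bookkeeping layered on top of the complexity estimate already invoked for \autoref{d:points-base}.
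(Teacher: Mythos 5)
Your proposal is correct and follows essentially the same route as the paper: present $l$ and $m$ by $O(\log[l:k])$, resp.\ $O(\log[m:k])$, generators with triangular minimal-polynomial relations of degree at most $[l:k]$, resp.\ $[m:k]$ (the paper extracts these generators by scanning a $k$-basis and keeping those with nonlinear minimal polynomial over the subalgebra generated so far, which is the same doubling argument you give), then write the fibre product in $O(N_{x,y})$ variables with relations of degree at most $d_{x,y}$ and invoke the zero-dimensional Gr\"obner basis computation of \cite[Sec.~3]{dickensteinetal91}. The only difference is cosmetic, so there is nothing to add.
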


\begin{algo} \label{a:points-isom}
  We first compute for $l$ and $m$ a ``small'' set of generators.
  Start by setting $X = F = \emptyset$.
  For $s$ in a $k$-basis for $l$, compute $k[X][s] \subs l$ and the minimal polynomial $f$ of $s$ over $k[X] \subs l$, and then, if $f$ is linear, do nothing, otherwise add $s$ to $X$ and $f$ to $F$.
  Write $l = k[X]/(F)$ afterwards, and repeat this for $m$.

  Now compute $x, y$ in terms of the ``small'' descriptions of $l$ and $m$ obtained above, and compute $\alpha^{-1} x \times_{\mdl R} \omega^{-1} y$.
  Finally, compute a $k$-basis for its coordinate ring via Gr\"obner bases, and the unit and multiplication table with respect to this basis.
\end{algo}

\begin{proof}
  Note that in the first step, we write $l$ (resp.~$m$) using $O(\log [l:k])$ (resp.~$O(\log [m:k])$) generators and relations, of degree at most $[l:k]$ (resp.~$[m:k]$).
  Moreover, $\mdl U$ and $\mdl R$ are given by at most $N$ generators and $r$ relations, of degree at most $d$.
  Therefore $\alpha^{-1} x \times_{\mdl R} \omega^{-1} y$ is given by $O\pth*[big]{\log [l:k], \log [m:k], N}$ generators, $O\pth*[big]{\log [l:k], \log [m:k], N, r}$ relations, of degree at most $\max\pth*[big]{[l:k],[m:k],d}$.
  Hence the arithmetic complexity follows from \cite[Sec.~3]{dickensteinetal91} in the same way as before.
\end{proof}

As a corollary, using \autoref{d:points-cover} and \autoref{d:points-intersection}, we have the following.

\begin{cor}
  There exists an algorithm that takes \autoref{s:points-torsors} as input and computes a diagram
  \[
    \begin{tikzcd}
      Y^\dagger \ar[r,shift left] \ar[r,shift right] & X^\dagger \\
      & X \ar[u] \ar[d] \\
      & \mdl U
    \end{tikzcd}
  \]
  with $X^\dagger, Y^\dagger$ finite \'etale over $k$, $Y^\dagger \rightrightarrows X^\dagger$ a presentation for $p^{-1}p_*\stk T$, $X \to X^\dagger$ a finite purely inseparable morphism between finite $k$-schemes, and $X \to \mdl U$ having image intersecting every geometric connected component of $\mdl U$, in arithmetic complexity
  \[
    \expt\pth*[Big]{O\pth*[big]{ (e+1)N^3\log(d)^3, \log(r) }}.
  \]
\end{cor}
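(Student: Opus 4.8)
The plan is to assemble the four results of this section into a single bookkeeping algorithm. First I would run \autoref{a:points-base} on the input of \autoref{s:points-torsors}, obtaining a finite family of points $x_i \colon \Spec l_i \to \mdl U$ with $l_i/k$ finite whose images together meet every geometric connected component of $\mdl U$; I set $X = \coprod_i \Spec l_i$ with the tautological morphism $X \to \mdl U$. For each $i$ I would then compute the separable closure $l_i^\dagger$ of $k$ in $l_i$ using the separable-closure routine of \autoref{h:computation}, and set $X^\dagger = \coprod_i \Spec l_i^\dagger$; the inclusions $l_i^\dagger \hookrightarrow l_i$ assemble into the finite purely inseparable morphism $X \to X^\dagger$. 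Since by construction $X \to \mdl U$ meets every geometric connected component, \autoref{d:points-cover} shows that the induced morphism $X^\dagger \to p^{-1}p_*\stk T$ is surjective; as $\stk T$ has representable and finite \'etale diagonal, this morphism is moreover representable and finite \'etale, hence an atlas.

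For the relations I would, for each ordered pair $(i,j)$, run \autoref{a:points-isom} on $x_i$ and $x_j$ to compute the finite $k$-scheme $\alpha^{-1}x_i \times_{\mdl R} \omega^{-1}x_j$ with coordinate ring $A_{ij}$, and then compute the separable closure $A_{ij}^\dagger$. Setting $Y^\dagger = \coprod_{i,j} \Spec A_{ij}^\dagger$ with its two projections to $X^\dagger$, \autoref{d:points-intersection} identifies $\Spec A_{ij}^\dagger$ with $\Spec l_i^\dagger \times_{p^{-1}p_*\stk T} \Spec l_j^\dagger$, so that $Y^\dagger = X^\dagger \times_{p^{-1}p_*\stk T} X^\dagger$. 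Combined with surjectivity of the atlas $X^\dagger \to p^{-1}p_*\stk T$, the standard formalism of groupoid presentations then gives $[X^\dagger/Y^\dagger] \iso p^{-1}p_*\stk T$, i.e.\ $Y^\dagger \rightrightarrows X^\dagger$ is the required presentation, and the whole diagram is assembled.

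It remains to tally the arithmetic complexity. By \autoref{d:points-base} the first step runs in $\expt\pth*[Big]{O\pth*[big]{N^2\log(d), eN\log(d), \log(r)}}$ and produces at most $\expt\pth*[big]{O(N\log(d))}$ points, each of degree $[l_i:k] \le \expt\pth*[big]{O(N\log(d))}$; computing each $l_i^\dagger$ costs polynomially in $[l_i:k]^{e+1}$, i.e.\ $\expt\pth*[big]{O((e+1)N\log(d))}$. For each of the $\expt\pth*[big]{O(N\log(d))}$ pairs, \autoref{d:points-isom} applies with $N_{x_i,x_j} = O(N\log(d))$ and $\log(d_{x_i,x_j}) = O(N\log(d))$, contributing $\expt\pth*[Big]{O\pth*[big]{N^3\log(d)^3, \log(r)}}$, while computing each $A_{ij}^\dagger$ costs polynomially in $(\dim_k A_{ij})^{e+1} = \expt\pth*[big]{O((e+1)N^2\log(d)^2)}$ by a B\'ezout estimate on the $O(N\log(d))$ generators. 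Multiplying by the $\expt\pth*[big]{O(N\log(d))}$ points and pairs merely rescales inside the exponential, so every contribution is bounded by $\expt\pth*[Big]{O\pth*[big]{(e+1)N^3\log(d)^3, \log(r)}}$, as claimed.

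The only genuinely delicate point beyond bookkeeping is the correctness of the presentation: one must be careful that the relation scheme is the fibre product over the \emph{separable} closures $\Spec l_i^\dagger, \Spec l_j^\dagger$ rather than over $\Spec l_i, \Spec l_j$, which is exactly what \autoref{d:points-intersection} supplies, and that surjectivity of the atlas together with the identification of $Y^\dagger$ suffices to present the stack $p^{-1}p_*\stk T$ itself, not merely its coarse space. I expect the complexity accounting --- in particular checking that the $(e+1)$-factors arising from the purely inseparable base changes in the separable-closure computations all stay within $\expt\pth*[big]{O((e+1)N^3\log(d)^3)}$ --- to be the most error-prone part, though it presents no conceptual difficulty.
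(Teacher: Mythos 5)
Your proposal is correct and matches the paper's intended argument: the paper gives no separate proof of this corollary, deriving it directly by combining \autoref{a:points-base}, \autoref{a:points-isom}, \autoref{d:points-cover}, and \autoref{d:points-intersection} with separable closures exactly as you do, and your complexity tally (with the $(e+1)$ factor coming only from the separable-closure computations and the dominant $N^3\log(d)^3$ term from the pairwise Isom computations) is consistent with the stated bound.
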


In the corollary above, we can assume that if $U$, $V$ are two distinct connected components of $X^\dagger$, then $\alpha^{-1}U \times_{Y^\dagger} \omega^{-1}V$ is empty, since whenever we encounter distinct connected components $U$, $V$ for which $\alpha^{-1}U \times_{Y^\dagger} \omega^{-1}V$ is not empty, then we may simply omit one of $U$, $V$.
It follows that the groupoid scheme $Y^\dagger \rightrightarrows X^\dagger$ is a finite disjoint union of groupoid schemes $Y_i^\dagger \rightrightarrows X_i^\dagger$ with finite \'etale arrows in which each $X_i$ is the spectrum of a finite separable field extension of $k$.
Therefore the problem of computing $\pi_0(p^{-1}p_*\stk T)$ reduces to computing $\pi_0$ of each of these groupoid schemes.

The following lemma suggests how to compute $\pi_0$ in this case.

\begin{lem}
  Let $S$ be a connected scheme, and let $Y \rightrightarrows X$ be groupoid scheme over $S$ with $X$ and $Y$ finite \'etale $S$-schemes.
  Then the image $R$ of $Y \to X \times_S X$ is an \'etale equivalence relation on $X$, and $\pi_0([X/Y]) = X/R$.
\end{lem}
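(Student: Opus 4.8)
The plan is to verify three things in sequence: that $R := \mathrm{im}(Y \to X \times_S X)$ is a closed (hence clopen) subscheme, that it is an equivalence relation, that it is étale over $X$ via either projection, and finally that the quotient $X/R$ represents $\pi_0([X/Y])$. The key structural observation I would exploit is that all the schemes in sight are finite étale over $S$, so every morphism between them is itself finite étale, and in particular is both open and closed. This rigidity is what makes the statement work and is really the crux of the argument.

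First I would analyze the image $R$. The morphism $Y \to X \times_S X$ is a map of finite étale $S$-schemes, hence finite étale, hence both open and closed; its scheme-theoretic image $R$ is therefore an open and closed subscheme of $X \times_S X$, and $Y \to R$ is faithfully flat (indeed finite étale and surjective onto $R$), so $R$ inherits the structure we need. Since $Y \rightrightarrows X$ is a groupoid, the identity, inverse, and composition morphisms of the groupoid descend to show that $R$ contains the diagonal of $X$ (reflexivity, via the identity section), is stable under the swap of factors (symmetry, via the inverse), and is closed under the evident composition $R \times_X R \to X \times_S X$ landing in $R$ (transitivity, via the groupoid composition). So $R$ is an equivalence relation on $X$. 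That it is an \emph{étale} equivalence relation follows because each projection $R \to X$ is a composition $R \hookrightarrow X \times_S X \to X$ of the clopen immersion with one projection, and both projections $X \times_S X \to X$ are finite étale (base change of $X \to S$ finite étale); restricting a finite étale morphism to a clopen subscheme keeps it étale.

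The main content is then identifying $\pi_0([X/Y])$ with $X/R$. Because $X$ is finite étale over a connected base $S$ and $R$ is an étale equivalence relation, the quotient $X/R$ exists as a finite étale $S$-scheme (for instance by faithfully flat descent, or by noting that over a connected base finite étale schemes are equivalent to finite $\pi_1(S)$-sets and $R$ corresponds to a $\pi_1$-stable equivalence relation, whose quotient set is again a finite $\pi_1$-set). The map $Y \rightrightarrows X$ and $R \rightrightarrows X$ have the same coequalizer of $fppf$ sheaves because $Y \to R$ is an epimorphism of sheaves; hence $[X/Y]$ and $[X/R]$ have the same sheafification $\pi_0$, and since $R$ is an equivalence relation rather than a mere groupoid, the stack $[X/R]$ is already the algebraic space $X/R$, so $\pi_0([X/Y]) = \pi_0([X/R]) = X/R$.

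\textbf{The hard part} will be pinning down the identification $\pi_0([X/Y]) = \pi_0([X/R])$ cleanly, i.e.\ checking that passing from the groupoid $Y$ to its image equivalence relation $R$ does not change the connected-components sheaf. The subtlety is that $Y \to R$ need not be a monomorphism (the groupoid may have nontrivial automorphisms), so $[X/Y] \to [X/R]$ is not an isomorphism of stacks; one must argue only at the level of $\pi_0$, using that $Y \to R$ is an epimorphism of sheaves so that two objects of $[X/Y]$ lie in the same component exactly when the corresponding points of $X$ are $R$-equivalent. The remaining points — clopenness of $R$, the three equivalence-relation axioms, and étaleness of the projections — are all routine consequences of the finite étale hypothesis and I would treat them briefly.
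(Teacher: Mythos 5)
Your proof is correct, but it takes a different route from the paper, whose entire proof is the single sentence ``this is trivial once we view $X$ and $Y$ as finite $\pi_1(S)$-sets'': under the equivalence between finite \'etale $S$-schemes over a connected base and finite continuous $\pi_1(S)$-sets, the lemma collapses to the elementary statement that the image of a groupoid in finite $G$-sets is a $G$-stable equivalence relation whose quotient is the set of isomorphism classes. You instead argue directly with schemes, and the one fact doing all the work for you --- that any morphism between finite \'etale $S$-schemes is itself finite \'etale, hence open and closed --- is precisely the scheme-theoretic shadow of ``maps of finite $\pi_1(S)$-sets''; you then only invoke the $\pi_1$-set dictionary as one option for constructing the quotient $X/R$. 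Your approach buys a self-contained argument that does not require choosing a geometric base point or citing the Galois correspondence for the \'etale fundamental group, and you correctly isolate and resolve the one non-formal point (that $Y \to R$ need not be a monomorphism, so one must pass to $\pi_0$ via the sheaf epimorphism $Y \to R$ rather than claim $[X/Y] \simeq [X/R]$); the paper's approach buys brevity, since in the category of finite $\pi_1(S)$-sets every step you spell out (clopenness of the image, the three equivalence-relation axioms, existence and identification of the quotient) is immediate. Either argument is acceptable; if you wanted to match the paper's economy you could simply translate everything into $\pi_1(S)$-sets at the outset and dispense with the discussion of scheme-theoretic images.
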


\begin{proof}
  This is trivial once we view $X$ and $Y$ as finite $\pi_1(S)$-sets.
\end{proof}

\begin{cor} \label{d:points-groupoid-iso}
  \autoref{a:points-groupoid-iso} takes a groupoid scheme $Y \rightrightarrows X$ over $k$ with $X$ and $Y$ finite \'etale $k$-schemes, and outputs $\pi_0([X/Y])$ in arithmetic complexity polynomial in the degrees of $X$ and $Y$ over $k$.
\end{cor}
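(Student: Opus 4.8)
The plan is to use the preceding lemma, which identifies $\pi_0([X/Y])$ with $X/R$, where $R \subseteq X \times_k X$ is the scheme-theoretic image of $(\alpha,\omega) \colon Y \to X \times_k X$ (built from the source and target morphisms of the groupoid) and which is guaranteed to be an \'etale equivalence relation. Since $X$ and $Y$ are finite \'etale over $k$, every object here is the spectrum of a finite \'etale $k$-algebra, so the whole computation reduces to manipulations of finite $k$-algebras, for which the linear-algebra routines recalled in \autoref{h:computation} suffice. In particular \autoref{a:points-groupoid-iso} will operate purely on the coordinate rings $\sh O(X)$ and $\sh O(Y)$ together with the two $k$-algebra homomorphisms $\alpha^*, \omega^* \colon \sh O(X) \to \sh O(Y)$ determined by the groupoid structure.

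The key point is that the quotient by an \'etale equivalence relation is computed by invariants: one has $\sh O(X/R) = \set*{f \in \sh O(X) : \mathrm{pr}_1^* f = \mathrm{pr}_2^* f \text{ in } \sh O(R)}$. As $R$ is the image of $(\alpha,\omega)$, its coordinate ring $\sh O(R)$ is a $k$-subalgebra of $\sh O(Y)$, and under this inclusion the two projections pull back to $\alpha^*$ and $\omega^*$; hence the invariant subalgebra is exactly the equaliser of $\alpha^*$ and $\omega^*$, so that
\[
  \pi_0([X/Y]) = \Spec \ker\pth*[big]{\alpha^* - \omega^* \colon \sh O(X) \to \sh O(Y)}.
\]
Thus \autoref{a:points-groupoid-iso} need never form $R$ explicitly: it is enough to evaluate $\alpha^*$ and $\omega^*$ on a $k$-basis of $\sh O(X)$, compute the kernel $E$ of their difference as a $k$-subspace of $\sh O(X)$, and then recover the $k$-algebra structure on $E$ by computing its unit and a multiplication table (each product $e_ie_j$, formed in $\sh O(X)$, is expressed back in a basis of $E$ by solving a linear system).

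For the complexity, write $n = \deg X$ and $m = \deg Y$. Computing $\alpha^* - \omega^*$ on a basis and extracting its kernel $E$ is a kernel computation for a $k$-linear map between spaces of dimension $n$ and $m$, and reconstructing the algebra structure on $E$ amounts to solving at most $\dim_k(E)^2 \le n^2$ linear systems of size $O(n)$. All of these are among the operations that \autoref{h:computation} carries out in arithmetic complexity polynomial in the dimensions involved, so the algorithm runs in arithmetic complexity polynomial in $\deg X$ and $\deg Y$, as claimed.

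The only non-routine ingredient is the identification $\sh O(X/R) = \ker(\alpha^* - \omega^*)$, together with the assertion that this equaliser is again a finite \'etale $k$-algebra whose spectrum is $X/R$. I expect this, rather than the algorithm or its complexity, to be the main obstacle, since it requires reconciling the scheme-theoretic image $R$, the \'etale equivalence relation supplied by the preceding lemma, and the ring-theoretic equaliser. Passing to the finite $\pi_1(\Spec k)$-sets $X(k^{\sep})$ and $Y(k^{\sep})$ as in the proof of that lemma makes this transparent: $R$ becomes a Galois-stable equivalence relation on the finite set $X(k^{\sep})$, the quotient set $X(k^{\sep})/R$ carries the induced Galois action, and its algebra of functions is precisely the Galois-equivariant functions constant on $R$-classes, i.e.\ the equaliser above. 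Hence this finite Galois set is represented by $\Spec \ker(\alpha^* - \omega^*)$, which completes the argument.
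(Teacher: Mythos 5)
Your proof is correct, but it implements the quotient differently from the paper. The paper's \autoref{a:points-groupoid-iso} first constructs the \'etale equivalence relation $R$ explicitly: it computes the primary decomposition of $\sh O(X) \otimes_k \sh O(X)$, identifies the factors $A_i$ through which $(\alpha,\omega)^* \colon \sh O(X)\otimes_k\sh O(X) \to \sh O(Y)$ is nonzero, sets $\sh O(R) = A_I = \prod_{i\in I} A_i$, and only then takes the equaliser of the two maps $\sh O(X) \to A_I$. You bypass the construction of $R$ entirely by observing that $\sh O(R) \hookrightarrow \sh O(Y)$ (the scheme-theoretic image of a map of reduced affine schemes has coordinate ring the image of the ring map), so the equaliser of the two projections into $\sh O(R)$ coincides with $\ker(\alpha^*-\omega^* \colon \sh O(X) \to \sh O(Y))$, which you compute by pure linear algebra. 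Your justification that this kernel is the coordinate ring of $X/R = \pi_0([X/Y])$ --- passing to finite $\Gal(k^{\sep}/k)$-sets, where the equaliser becomes the algebra of Galois-equivariant functions constant on $R$-classes --- is exactly the mechanism behind the paper's one-line proof of the preceding lemma, so nothing is missing. What your route buys is the elimination of the primary decomposition step, hence of any call to the factorisation black box $F$; what the paper's route buys is the explicit presentation $Y^\dagger \rightrightarrows X^\dagger$ with $R$ in hand, which fits the surrounding discussion (e.g.\ discarding redundant components $U$, $V$ with $\alpha^{-1}U \times_{Y^\dagger} \omega^{-1}V$ nonempty). Both are polynomial in $\deg X$ and $\deg Y$, so the stated complexity bound holds either way.
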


\begin{algo} \label{a:points-groupoid-iso}
  Let $l$, $B$ be the respective coordinate rings of $X$, $Y$, and let $A = l \otimes_k l$.
  Let $A = \prod_i A_i$ be the primary decomposition of $A$; since $A$ is separable over $k$, all $A_i$ are fields.

  Compute the morphism $A \to B$, and compute the set $I$ of indices $i$ for which the induced map $A_i \to B$ is non-zero.
  Set $A_I = \prod_{i \in I} A_i$.
  Compute the morphisms $l \to A_I$ sending $s \in l$ to $s \otimes 1$ and $1 \otimes s$, respectively, and return their equaliser $k'$.
\end{algo}

\begin{proof}
  Note that, since $\Spec A_I$ is the image of $Y$ in $X \times_{\Spec k} X$ by construction, $\Spec k'$ is the coequaliser of the two morphisms $\Spec A_I \to X$ constructed, in other words, it is the quotient of $X$ by the \'etale equivalence relation $\Spec A_i$ on $X$, as desired.
\end{proof}

Applying the above to the groupoid scheme obtained in \autoref{h:groupoid}, we get the following.

\begin{cor}
  There exists an algorithm that takes as input \autoref{s:groupoid} and computes a finite \'etale $k$-scheme representing $\hl^1(X_{k^{\sep},\et},j_!\sh G)$ in arithmetic complexity
  \[
    \expt\pth*[Big]{O\pth*[big]{ (e+1)s^{12}(\#G)^{12}\gamma^3 \log(s\#G)^3 }}.
  \]
\end{cor}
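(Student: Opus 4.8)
The plan is to chain together the three computations developed above, reading \autoref{s:groupoid} as input and producing the étale $k$-scheme representing $\hl^1$ as output. First I would run the algorithm of \autoref{h:groupoid} (its concluding theorem) to produce the groupoid scheme $\mdl R \rightrightarrows \mdl U = \coprod_b \pth*[big]{\mdl R_b \rightrightarrows \mdl U_b}$ together with its morphism to the stack $\stk T$ of $j_!\sh G$-torsors; by the proposition and its corollary in \autoref{h:groupoid} this groupoid satisfies the hypotheses of \autoref{d:stack-key-lemma}, each $\mdl U_b$ is equidimensional, and $\pi_0(\mdl U_{k^\sep})$ is canonically identified, as a $\Gal(k^\sep/k)$-set, with $\hl^1(X_{k^\sep,\et},j_!\sh G)$. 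I would then feed this groupoid into the corollary following \autoref{d:points-isom} to obtain a presentation $Y^\dagger \rightrightarrows X^\dagger$ of $p^{-1}p_*\stk T$ with finite étale arrows, and finally apply \autoref{d:points-groupoid-iso} (after splitting $X^\dagger$ into connected components as in the remark after that corollary, so that each arrow-groupoid has separable base) to compute $\pi_0$, which is the desired finite étale $k$-scheme.

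The one structural point to address is that $\mdl U$ is only a disjoint union of the equidimensional pieces $\mdl U_b$, of possibly differing dimensions, so \autoref{s:points-torsors} does not apply to $\mdl U$ as a whole. I would therefore run the last two steps separately for each type $b$. This is legitimate because the type of $\sh O_T$ is an isomorphism invariant of a torsor $T$ (the type of a vector bundle on $\bb P^1$ is well-defined and stable under isomorphism and base change), so the substack $\stk T_b$ of type-$b$ torsors is open and closed in $\stk T$, still has representable finite étale diagonal, and $p^{-1}p_*[\mdl U_{b,k^\perf}/\mdl R_{b,k^\perf}] \to p^{-1}p_*\stk T_{b,k^\perf}$ is an equivalence; hence each $\mdl R_b \rightrightarrows \mdl U_b$ with target $\stk T_b$ is an instance of \autoref{s:points-torsors}. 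Consequently $\pi_0(p^{-1}p_*\stk T) = \coprod_b \pi_0(p^{-1}p_*\stk T_b)$, and the final output is the disjoint union over $b$ of the finite étale $k$-schemes produced for each type.

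For the complexity I would substitute the bounds of \autoref{d:groupoid-bounds} into the estimate of the corollary following \autoref{d:points-isom}. For each fixed $b$ we have $N = O\pth*[big]{s^4(\#G)^4\gamma}$ variables, $r = O\pth*[big]{s^4(\#G)^4\gamma}$ relations, and degree $d = s\#G$, so $N^3\log(d)^3 = O\pth*[big]{s^{12}(\#G)^{12}\gamma^3\log(s\#G)^3}$ and that corollary runs in arithmetic complexity $\expt\pth*[Big]{O\pth*[big]{(e+1)s^{12}(\#G)^{12}\gamma^3\log(s\#G)^3}}$, the term $\log r$ being absorbed. The preceding groupoid computation costs only $\expt\pth*[big]{O(s\#G\log(s\gamma\#G))}$, and \autoref{d:points-groupoid-iso} is polynomial in the degrees of $X^\dagger$ and $Y^\dagger$, which are themselves bounded by the previous exponential; both are dominated by the displayed bound. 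Finally there are $\expt\pth*[big]{O(s\#G\log(s\gamma\#G))}$ types $b$ by the proposition bounding them, and since $s\#G\log(s\gamma\#G)$ is of strictly smaller order than $s^{12}(\#G)^{12}\gamma^3\log(s\#G)^3$, multiplying by the number of types leaves the overall bound unchanged.

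The hard part will not be any single step — each is furnished by a cited result — but the bookkeeping needed to see that passing to the per-type pieces neither loses torsors nor inflates the asymptotics: one must verify that distinct types contribute disjointly to $\pi_0$ (so that no Isom-sheaf between torsors of different types ever has to be computed) and that the exponentially many types are swallowed by the dominant exponential. Both reduce to the single observation that the exponent contributed by the number of types is of strictly smaller order than the exponent coming from the point-finding step of \autoref{h:points-torsors}.
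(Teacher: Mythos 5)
Your proposal is correct and is essentially the paper's own argument: the paper gives no proof beyond ``applying the above to the groupoid scheme obtained in \autoref{h:groupoid}'', i.e.\ exactly the chain \autoref{h:groupoid} $\to$ the corollary after \autoref{d:points-isom} $\to$ \autoref{d:points-groupoid-iso}, with the same substitution of the bounds from \autoref{d:groupoid-bounds}. Your explicit per-type treatment of $\mdl U=\coprod_b \mdl U_b$ (needed because \autoref{s:points-torsors} assumes equidimensionality, while the paper only establishes it for each $\mdl U_b$) is a point the paper leaves implicit, and you resolve it correctly by noting that the type is an isomorphism invariant and that the exponentially many types are absorbed by the dominant term.
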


In order to be able to compute additional structures on $\hl^1(X_{k^{\sep},\et},j_!\sh G)$, it will turn out to be useful to compute this set as a finite $\Gal(k^{\sep}/k)$-set, together with some additional structure.
The first step in this is to compute a finite Galois extension $l/k$ such that the Galois action on $\hl^1(X_{k^{\sep},\et},j_!\sh G)$ factors through $\Gal(l/k)$.
This is done in the standard recursive way.

\begin{lem} \label{d:points-minimal-galois}
  \autoref{a:points-minimal-galois} takes as input a finite separable $k$-algebra $A$, and computes the minimal Galois extension $l/k$ such that $A \otimes_k l$ is a product of copies of $l$, in arithmetic complexity polynomial in $(\dim_k A)!$.
  If $\Spec A$ is the underlying scheme of a group scheme over $k$, then the arithmetic complexity is
  \[
    \expt\pth*[Big]{O\pth*[big]{ \log (\dim_k A)^2 }}.
  \]
\end{lem}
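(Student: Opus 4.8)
The plan is to pin down the field $l$ abstractly, recognise it as the splitting field of an associated polynomial, and then analyse the recursive splitting-field computation of \autoref{a:points-minimal-galois}; the two complexity claims will differ only in the a priori bound available for $[l:k]$. Write $A = \prod_i k_i$ with each $k_i/k$ a finite separable field extension, choose a primitive element for each so that $k_i = k[x]/(f_i)$ with $f_i$ separable and irreducible, and put $f = \prod_i f_i$, so that $\deg f = \sum_i [k_i:k] = \dim_k A$. For any extension $m/k$ the algebra $A \otimes_k m$ is a product of copies of $m$ exactly when each $f_i$, and hence $f$, splits completely over $m$; therefore the minimal Galois extension $l/k$ with $A \otimes_k l$ split is precisely the splitting field of $f$. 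Since $\Gal(l/k)$ acts faithfully on the $\dim_k A$ roots of $f$, it embeds into the symmetric group on $\dim_k A$ letters, and so $[l:k] = \#\Gal(l/k)$ divides $(\dim_k A)!$.

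Next I would verify correctness and the general bound. The algorithm keeps a finite extension $m/k$ (initially $k$), represented as a finite $k$-algebra as in \autoref{h:computation}, factors $f$ over $m$ with the black-box factorisation routine, and, while some irreducible factor $h$ of $f$ over $m$ is nonlinear, replaces $m$ by $m[x]/(h)$; it stops and returns $m$ once $f$ splits. Each intermediate $m$ is generated over $k$ by roots of $f$, hence embeds into $l$, so $[m:k] \le [l:k]$ divides $(\dim_k A)!$ at every stage, while the number of iterations is at most $\dim_k A$. Each iteration factors a polynomial of degree at most $\dim_k A$ over a field of degree at most $(\dim_k A)!$ over $k$; by the factorisation bound recalled in \autoref{h:computation} (arithmetic complexity polynomial in $[m:k]^{e+1}$ and the degree), and treating the imperfectness exponent $e$ as a constant of $k$, each iteration and hence the whole algorithm runs in arithmetic complexity polynomial in $(\dim_k A)!$, which is the first claim.

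The main point is the group-scheme case, where I expect the gain to come entirely from a sharper bound on $[l:k]$ rather than from any change in the algorithm. If $\Spec A$ underlies a group scheme then, $A$ being separable, it underlies a finite \'etale group scheme, corresponding to a finite group $M$ with $\#M = \dim_k A$ carrying a $\Gal(k^{\sep}/k)$-action by group automorphisms; the field $l$ is then the fixed field of the kernel of $\Gal(k^{\sep}/k) \to \Aut(M)$, so $\Gal(l/k)$ embeds into $\Aut(M)$. The crucial estimate, which is the heart of the argument, is $\#\Aut(M) \le (\#M)^{\log_2 \#M}$: greedily enlarging a subgroup one generator at a time at least doubles its order, so $M$ has a generating set of size at most $\log_2 \#M$, and an automorphism is determined by its values on such a set. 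Hence $[l:k] \le \#\Aut(M) \le (\dim_k A)^{\log_2(\dim_k A)} = \expt\pth*[Big]{O\pth*[big]{(\log \dim_k A)^2}}$. Feeding this quasi-polynomial bound on $[m:k]$ into the same per-iteration factorisation estimate, and absorbing the at most $\dim_k A$ iterations and the polynomial-in-$\dim_k A$ overhead into the exponential, yields total arithmetic complexity $\expt\pth*[Big]{O\pth*[big]{(\log \dim_k A)^2}}$. I expect the main obstacle to be establishing this automorphism bound cleanly; once it is in hand, everything downstream is a routine rerun of the general analysis.
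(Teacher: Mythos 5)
Your proposal is correct and matches what the paper intends: the paper gives no written proof beyond the algorithm itself, and your analysis (identifying $l$ as a splitting field with $[l:k]$ dividing $(\dim_k A)!$ in general, and in the group-scheme case embedding $\Gal(l/k)$ into $\Aut(M)$ and bounding $\#\Aut(M) \le (\#M)^{\log_2 \#M}$ via a generating set of size at most $\log_2 \#M$) is exactly the intended argument. The only cosmetic difference is that the paper's algorithm iterates primary decomposition of $A' \otimes_l A'_i$ rather than factoring a fixed polynomial $f$ over the growing field, but these are equivalent and your complexity accounting carries over unchanged.
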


\begin{algo} \label{a:points-minimal-galois}
  Set $A' = A$, and compute a primary decomposition $A' = \prod_i A'_i$.
  Set $l = k$.
  While the number of factors is not $\dim_k A$, choose $A'_i$ of maximal dimension, and set $A' = A' \otimes_l A'_i$, $l = A'_i$, and compute a primary decomposition $A' = \prod_{i} A'_i$.
  Return $l$.
\end{algo}

\begin{cor}
  There exists an algorithm that takes as input a finite separable $k$-algebra $A$, and computes the corresponding finite $\Gal(k^{\sep}/k)$-set in arithmetic complexity polynomial in $(\dim_k A)!$ (or
  \[
    \expt\pth*[Big]{O\pth*[big]{ \log (\dim_k A)^2 }}
  \]
  if $\Spec A$ is the underlying scheme of a group scheme over $k$).
\end{cor}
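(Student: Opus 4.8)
The plan is to reduce everything to the minimal-splitting-field computation already carried out in \autoref{d:points-minimal-galois}, and then to read off both the group $\Gal(l/k)$ and its action on the points of $\Spec A$ from two further primary decompositions. First I would run \autoref{a:points-minimal-galois} on $A$ to obtain the minimal Galois extension $l/k$ for which $A \otimes_k l$ is a product of $n := \dim_k A$ copies of $l$. By that lemma this step costs arithmetic complexity polynomial in $(\dim_k A)!$ in general, and $\expt\pth*[big]{O\pth*[big]{\log(\dim_k A)^2}}$ when $\Spec A$ underlies a group scheme. Since the number of field elements in the output is bounded by the number of operations used to produce it, in either case $[l:k]$ is bounded by the stated complexity; moreover in the general case we automatically have $[l:k] \mid (\dim_k A)!$ because $\Gal(l/k)$ embeds into the symmetric group on the $n$ geometric points. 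By \autoref{h:computation}, every field operation in $l$ then costs a number of operations in $k$ that is polynomial in $[l:k]$.

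Next I would recover $\Gal(l/k)$ together with its action on $l$. Since $l/k$ is Galois, there is a canonical isomorphism $l \otimes_k l \iso \prod_{\sigma \in \Gal(l/k)} l$ under which the projection to the $\sigma$-factor is $a \otimes b \mapsto a\,\sigma(b)$. Computing the primary decomposition of the finite $k$-algebra $l \otimes_k l$ by \cite[Sec.~7]{khurimakdisi04} therefore produces the set $\Gal(l/k)$, one element per factor, and reading off each projection exhibits each $\sigma$ as an explicit $k$-algebra automorphism of $l$; the group law is then obtained by composing these automorphisms. As $\dim_k(l \otimes_k l) = [l:k]^2$, this step is polynomial in $[l:k]$.

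I would then compute the underlying set. By the choice of $l$ we have $A \otimes_k l \iso l^n$, so the primary decomposition of $A \otimes_k l$ exhibits the $n$ points of $X := \Hom_k(A,l) = (\Spec A)(l)$ as its $n$ projections $\phi_1,\dotsc,\phi_n \colon A \to l$; this is again the primary decomposition of a finite algebra, polynomial in $\dim_k(A \otimes_k l) = n[l:k]$. Finally the Galois action is $\sigma \cdot \phi = \sigma \circ \phi$: for each $\sigma \in \Gal(l/k)$ and each $\phi_m$ I would form $\sigma \circ \phi_m \colon A \to l$ and match it against the precomputed list $\phi_1,\dotsc,\phi_n$ by comparing the images of a fixed $k$-basis of $A$, recording the resulting permutation. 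This costs a number of $l$-operations polynomial in $n[l:k]$. The pair $(l,X)$ together with this action is the desired finite $\Gal(k^{\sep}/k)$-set, and the action factors through $\Gal(l/k)$ precisely because $l$ is the minimal splitting field.

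For the complexity, every step after the first is polynomial in $n$ and $[l:k]$, and hence, since $n \le \expt\pth*[big]{O(\log n)}$ and $[l:k]$ is already bounded by the cost of \autoref{a:points-minimal-galois}, is absorbed into that first step; thus the total is polynomial in $(\dim_k A)!$, and $\expt\pth*[big]{O\pth*[big]{\log(\dim_k A)^2}}$ in the group scheme case. The only genuinely delicate point, and the one I expect to be the main obstacle, is the bookkeeping of the Galois action: one must present $\Gal(l/k)$ concretely via the $l \otimes_k l$ decomposition above and verify that working over $l$ never costs more than the minimal-splitting-field computation has already paid for. Everything else is routine primary-decomposition manipulation covered by \autoref{h:computation} and \cite[Sec.~7]{khurimakdisi04}.
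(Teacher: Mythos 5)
Your proposal is correct and follows essentially the route the paper intends: the corollary is stated without proof as an immediate consequence of \autoref{d:points-minimal-galois}, and your filling-in (compute $l$ via \autoref{a:points-minimal-galois}, recover $\Gal(l/k)$ from the primary decomposition of $l \otimes_k l$, recover the point set from that of $A \otimes_k l$, and read off the action by composition) is the standard argument, with the complexity correctly dominated by the first step since $[l:k]$ is bounded by the cost of computing it.
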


Now by base change to $l$, we get the following.

\begin{cor} \label{d:points-cohomology}
  There exists an algorithm that takes as input \autoref{s:groupoid} and computes:
  \begin{itemize}
    \item a finite Galois extension $l/k$ such that the $\Gal(k^{\sep}/k)$-action on the finite set $\hl^1(X_{k^{\sep},\et},j_!\sh G)$ factors through $\Gal(l/k)$,
    \item the finite $\Gal(l/k)$-set $\hl^1(X_{k^{\sep},\et},j_!\sh G)$,
    \item for each $h \in \hl^1(X_{k^{\sep},\et}, j_!\sh G)$, a finite extension $l_h/l$ and a morphism $\Spec l_h \to \mdl U$ representing $h$,
  \end{itemize}
  in arithmetic complexity
  \[
    \expt\pth*[Big]{O\pth*[big]{ (e+1)s^{12}(\#G)^{12}\gamma^3 \log(s\#G)^3, (e+1) \log [l:k] }}.
  \]
\end{cor}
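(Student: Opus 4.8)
The plan is to combine the preceding corollary with \autoref{d:points-minimal-galois} and the corollary following it, running everything over $k$ first and base changing to the splitting field $l$ only at the very end. First I would run the algorithm of the preceding corollary on \autoref{s:groupoid}, retaining not merely the finite \'etale $k$-scheme $\Spec A$ representing $\hl^1(X_{k^{\sep},\et},j_!\sh G)$ but all of the intermediate data it produces along the way: in particular the presentation $Y^\dagger \rightrightarrows X^\dagger$ of $p^{-1}p_*\stk T$ and the morphism $X \to \mdl U$ whose image meets every geometric connected component of $\mdl U$ (cf.~\autoref{d:points-cover} and \autoref{d:points-intersection}). This step already costs $\expt(O((e+1)s^{12}(\#G)^{12}\gamma^3\log(s\#G)^3))$ arithmetic operations, which accounts for the first term in the claimed bound.

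Next I would feed $A$ to the algorithm of the corollary following \autoref{d:points-minimal-galois}. Since $\sh G$ is abelian in the setting relevant to \autoref{d:main}, the set $\hl^1(X_{k^{\sep},\et},j_!\sh G)$ is a group and $\Spec A$ is the underlying scheme of the corresponding finite group scheme over $k$; hence the group-scheme variant applies and produces the minimal Galois extension $l/k$ splitting $A$, which is the first bullet, together with the finite $\Gal(l/k)$-set $\hl^1(X_{k^{\sep},\et},j_!\sh G)$, the second bullet. As $\dim_k A = \#\hl^1(X_{k^{\sep},\et},j_!\sh G)$ is at most the output size $\expt(O(s\#G\log(s\gamma\#G)))$, the cost $\expt(O(\log(\dim_k A)^2))$ of this step is dominated by the first term; moreover, the factorisation of the Galois action through $\Aut$ of the group bounds $[l:k]$, so that $(e+1)\log[l:k]$ is the correct quantity to carry into the next step.

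For the third bullet I would base change the retained data along $k \to l$. Over $l$ the scheme $(\Spec A)_l$ is a disjoint union of copies of $\Spec l$, one per element $h \in \hl^1(X_{k^{\sep},\et},j_!\sh G)$; tracing each such point back through the quotient $X^\dagger \to \Spec A$ (surjective by construction), choosing a point of $X^\dagger_l$ above it, lifting that point through the finite purely inseparable map $X_l \to X^\dagger_l$, and composing with $X \to \mdl U$ yields for each $h$ a morphism $\Spec l_h \to \mdl U$ representing $h$, where $l_h$ is the residue field at the chosen point. Each field operation over $l$ costs $\expt(O((e+1)\log[l:k]))$ operations over $k$, so this base change contributes exactly the second term $(e+1)\log[l:k]$ to the exponent, completing the estimate.

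The main obstacle will be the bookkeeping in this last step: one must thread the chain of identifications $\pi_0(\mdl U) \iso \pi_0(\stk T) = \hl^1(X_{k^{\sep},\et},j_!\sh G)$ from \autoref{d:stack-key-lemma}, the presentation $Y^\dagger \rightrightarrows X^\dagger$, and the surjection $X \to \mdl U$ consistently through the base change to $l$, so that the computed point $\Spec l_h \to \mdl U$ genuinely lands in the geometric connected component corresponding to $h$ rather than in some arbitrary component. The only other point requiring care is confirming that the group-scheme structure on $\Spec A$ is precisely what keeps $[l:k]$---and hence both the cost of computing $l$ and that of the final base change---within the stated bound, rather than the doubly-exponential $(\dim_k A)!$ one would otherwise fear.
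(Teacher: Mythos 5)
Your proposal follows the same route the paper takes (the paper gives no written proof; the corollary is obtained exactly by chaining the preceding corollary, \autoref{a:points-minimal-galois}, and a base change to $l$), and the overall structure is correct. One caveat: you justify the splitting-field step by invoking the group-scheme case of \autoref{d:points-minimal-galois} on the grounds that ``$\sh G$ is abelian in the setting relevant to \autoref{d:main}'', but \autoref{s:groupoid} only assumes $\sh G$ is a finite locally constant sheaf of \emph{groups}, so $\hl^1(X_{k^{\sep},\et},j_!\sh G)$ is in general only a pointed set and $\Spec A$ need not carry a group structure; the abelian specialisation is only made later, in \autoref{d:module-cohomology}. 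This is why the statement carries the extra term $(e+1)\log[l:k]$ in the exponent: in the non-abelian case one runs the general version of \autoref{a:points-minimal-galois}, whose cost for the actual minimal $l$ is polynomial in $\dim_k A$ and $[l:k]$ and is therefore absorbed by that term (your claim that the cost of this step is dominated by the \emph{first} term is also off even in the abelian case, since $(\log \dim_k A)^2$ grows like $s^{16}(\#G)^{16}\gamma^4$; it is the $\log[l:k]$ term that absorbs it). With that bookkeeping corrected, the argument is the paper's.
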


\section{Reductions and applications} \label{h:reduction}

In the previous sections, we assumed normal proper curves to be presented as finite locally free $\bb P^1_k$-schemes as described in \autoref{h:standard-modules}.
Alternatively, normal proper connected curves can be presented using their function fields, as a finite extension of $k(x)$, and in this case, we will present morphisms between normal proper connected curves by morphisms between their function fields.

Passing from the presentation as finite locally free $\bb P^1_k$-scheme to that as a finite extension of $k(x)$ is simple:  given a finite locally free $\bb P^1_k$-scheme $X$ with type $a$ of length $s$, one can compute the finite $k(x)$-algebra $k(X)$ corresponding to it in arithmetic complexity polynomial in $s$ and $\sum_i -a_i$, simply by, in the conventions of \autoref{h:standard-modules}, substituting $y=1$ in the multiplication table and unit defining $\sh O_X$;  this computation is functorial in $X$.

Conversely, given a finite field extension $A$ of $k(x)$ of degree $d$ defined by elements of height at most $h$, one can compute $\alpha_1,\dotsc,\alpha_n \in A$ such that $A = k(x,\alpha_1,\dotsc,\alpha_n)$, and minimal polynomials for $\alpha_{i+1}$ over $k(x,\alpha_1,\dotsc,\alpha_i)$ in arithmetic complexity polynomial in $d$, $h$, using the methods of \cite{berkowitz84};  note that $n \leq \log_2 d$, and the minimal polynomials have degree at most $d$, and their coefficients have height at most $d^3h$.
By multiplying by suitable polynomials in $k[x]$, one can make each $\alpha_{i+1}$ have a minimal polynomial of which the coefficients lie in $k[x,\alpha_1,\dotsc,\alpha_i]$, in arithmetic complexity polynomial in $d$, $h$;  the minimal polynomials in this case will have $x$-degree at most $d^5h$.
Therefore we obtain a $k[x]$-order in $A$ consisting of products of the $\alpha_i$.
Similarly, we can compute a $k[x^{-1}]$-order in $A$, in arithmetic complexity polynomial in $d$ and $h$.

Then, by \cite[Sec.~2.7]{diem08} (which we can apply since we are able to compute nilradicals of finite $k$-algebras) one can compute the corresponding maximal orders over $k[x]$ and $k[x^{-1}]$ in arithmetic complexity polynomial in $d^{e+1}$ and $h$.
Moreover, they define the same $k[x,x^{-1}]$-submodule of $A$, so it follows from \cite[Lem.~11.50]{gortzwedhorn10} that one can compute a sequence $(a_i)$ of integers, and bases $(b_i)$ and $(c_i)$ of the respective maximal orders such that $b_i = x^{a_i} c_i$ for all $i$, and therefore a presentation of the normal proper connected curve as a finite locally free $\bb P^1_k$-scheme, in arithmetic complexity polynomial in $d^{e+1}$ and $h$ as well.

In fact, as the computation of nilradicals of finite $k$-algebras as described in \autoref{h:computation} proceeds by first computing the nilradical of the base change to $k^\perf$, it follows that one can compute a purely inseparable extension $l$ of $k$ and a {\em smooth} proper connected curve with function field $k(x)l$ in arithmetic complexity polynomial in $d^{e+1}$ and $h$;  we will refer to this as the construction of a {\em smooth completion}.

As for functoriality, given a morphism $K \to L$ of function fields, with $K$ given as a finite $k(x)$-algebra, and $L$ as a finite $k(y)$-algebra, one can compute a $k(x)$-basis of $L$ (and therefore a $K$-basis of $L$) by successively computing a $k(x)$-basis of $k(x,y)$ and a $k(x,y)$-basis of $L$;  with respect to this $k(x)$-basis of $L$, the computation given above is functorial.

\begin{rem}
  The above gives us an algorithm for the computation of the normalisation (over $k$ and over $k^\perf$) of a finite locally free $\bb P^1_k$-scheme of type $a$ of length $s$, in arithmetic complexity polynomial in $s^{e+1}$ and $\sum_i -a_i$;  for the type $a'$ (of length $s'$) of the resulting normal proper curve, we have $s = s'$ and $\sum_i -a'_i \leq \sum_i -a_i$.
\end{rem}

To present divisors on proper normal curves, we will mainly use the so-called {\em free ideal presentation}.
Roughly speaking, in this presentation, divisors on a proper normal curve $X$ are given as formal sums of closed points of $X$, which in turn are given by maximal ideals of $\sh O_X$.
For more details on this and other related presentations, see e.g.~\cite{hess02}, or \cite[Ch.~2]{diem08} for a more detailed exposition.
For the purposes of this paper, we simply note that we can compute images and pre-images of closed points of a morphism of proper normal curves in arithmetic complexity polynomial in the size of the input.

Now an arbitrary normal curve $X$ will be presented by the product of the function fields of its connected components, and the finite complement of $X$ in its normal completion $\nc{X}$;  as a measure for the size of an affine curve, we take the $k(x)$-degree of the corresponding $k(x)$-algebra, an upper bound $h$ for the height of the elements of $k(x)$ defining this algebra, the number of closed points in $\nc{X} - X$, and the maximum degree of these closed points over $k$.
Morphisms $Y \to X$ between normal curves will be presented by morphisms between their normal completions, such that for every closed point in the complement of $X$ in $\nc{X}$ there is a closed point of $Y$ in $\nc{Y}$ lying over it.

\subsection{Topological invariance of the small \'etale site}

In our reduction to \autoref{s:groupoid}, we will make use of finite locally free, purely inseparable morphisms between normal proper curves and the {\em topological invariance of the small \'etale site}, which states that for a universal homeomorphism $f \colon Y \to X$, the functors $f_*$ and $f^{-1}$ are quasi-inverse functors between $\Sh(X_{\et})$ and $\Sh(Y_{\et})$.
Given a finite locally free, purely inseparable morphism $f \colon Y \to X$ between normal proper connected curves, we will make this explicit for \'etale sheaves representable by \'etale separated $X$-schemes (resp.~$Y$-schemes), i.e.~by normal curves.

For $\sh F$ an \'etale sheaf representable by an \'etale and separated $X$-scheme, the pullback is simply $Y \times_X \sh F$, which clearly can be computed in arithmetic complexity polynomial in the size of the input.

\begin{prop} \label{d:topos-pushforward}
  \autoref{a:topos-pushforward} takes a finite locally free, purely inseparable morphism $f \colon Y \to X$ between normal proper connected curves, and a normal curve $\sh F$, \'etale over $Y$, and computes $f_* \sh F$, in arithmetic complexity polynomial in the size of the input.
\end{prop}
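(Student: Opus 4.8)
The plan is to invoke the topological invariance of the small \'etale site directly: since $f$ is finite locally free and purely inseparable, it is a universal homeomorphism, so $f_* \sh F$ is the unique \'etale separated $X$-scheme $E$ with $Y \times_X E \iso \sh F$ compatibly, and the whole problem reduces to descending $\sh F$ along $f$ and exhibiting the result as a normal curve. Because $f$ is a homeomorphism, $E$ and $\sh F$ have the same underlying space, so I would split the computation into a generic part (the function fields) and a boundary part (the complement of closed points), reconstructing $E$ from its normal completion $\nc{E}$ together with the finite set $\nc{E} - E$.

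For the generic part, I would first reduce to connected components and, using the functoriality of the function-field presentation recalled above, present the function field $M$ of each connected component of $\sh F$ as a finite $k(X)$-algebra via the inclusion $k(X) \to k(Y)$ induced by $f$; here $M/k(Y)$ is separable while $k(Y)/k(X)$ is purely inseparable. The key algebraic point is that the separable closure $L$ of $k(X)$ in $M$ satisfies $L \otimes_{k(X)} k(Y) \iso M$ with $L/k(X)$ separable: indeed $L \cap k(Y) = k(X)$ by linear disjointness of separable and purely inseparable extensions, so $L \otimes_{k(X)} k(Y)$ injects into $M$ with image $L \cdot k(Y)$, and $M/(L \cdot k(Y))$ is simultaneously separable (as a subextension of $M/k(Y)$) and purely inseparable (as a subextension of $M/L$), hence trivial. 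Thus $L$ is exactly the function field of the corresponding component of $E$, and I would compute each such $L$ with the separable-closure algorithm of \autoref{h:computation} applied over the base $k(X)$, whose field operations are available through the function-field arithmetic of \autoref{h:computation}, and then assemble the $L_i$ into the normal proper curve $\nc{E}$ by the normal-completion construction.

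For the boundary part, note that $M/L$ is purely inseparable, being a base change of $k(Y)/k(X)$, so the induced morphism $\nc{\sh F} \to \nc{E}$ of normal proper curves is finite and purely inseparable, hence again a universal homeomorphism and in particular a bijection on closed points; I would therefore take $\nc{E} - E$ to be the image of the given complement $\nc{\sh F} - \sh F$ under this bijection, computed by the closed-point image routine recalled above. By construction $Y \times_X E$ has normal completion $\nc{\sh F}$ and complement $\nc{\sh F} - \sh F$, so $f^{-1} E = \sh F$, and \'etaleness of $E$ over $X$ then follows for free from topological invariance. The complexity is polynomial in the input size, each of the four steps --- component decomposition, the separable-closure descent, the normal completion, and the closed-point transfer --- being polynomial by the cited results. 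The main obstacle is the separable-closure descent of the function fields, which is precisely where $f^{-1}$ is inverted generically; once $L$ is identified as the separable closure of $k(X)$ in $M$, the boundary transfer is pure topological bookkeeping made possible by $f$ being a homeomorphism.
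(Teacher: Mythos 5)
Your proposal is correct and produces the same output as \autoref{a:topos-pushforward}, but it handles the generic fibre by a genuinely different (more hands-on) route. Writing $K = k(X)$, $L = k(Y)$ and $B$ for the total function ring of $\sh F$, the paper descends $B$ to $K$ in a single step by taking its Weil restriction $A$ from $L$ to $K$ and invoking the adjunction with base change to get $A \otimes_K L \iso B$; you instead decompose $B$ into its field factors $M$ and identify the descended algebra factorwise as the separable closure of $K$ in $M$, proving $A \otimes_K L \iso B$ by linear disjointness of separable and purely inseparable extensions together with the standard fact that $M$ is purely inseparable over the separable closure of $K$ in it. The two constructions agree (the Weil restriction along a purely inseparable extension of an \'etale algebra is exactly its maximal \'etale $K$-subalgebra), and the boundary bookkeeping --- transporting $\nc{\sh F} - \sh F$ along the induced universal homeomorphism of normal completions --- is identical in both. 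What your version buys is explicitness: it reduces the descent to the separable-closure routine of \autoref{h:computation} applied over $K$ and avoids having to know that Weil restriction preserves finite separable algebras here and that the counit is an isomorphism; what the paper's version buys is uniformity and immediate functoriality from the adjunction, with no case-by-case decomposition into field factors. One presentational wrinkle: you deduce \'etaleness of $E$ over $X$ from $f^{-1}E = \sh F$, but your verification that $Y \times_X E$ is the open subscheme $\sh F$ of $\nc{\sh F}$ implicitly uses that $Y \times_X E$ is normal, which is cleanest to get from \'etaleness of $E$ over the normal $X$ (or, avoiding the circle, from the uniqueness clause of topological invariance: the unique \'etale separated $X$-scheme $E_0$ with $f^{-1}E_0 \iso \sh F$ has generic fibre $A$ by uniqueness of the descent of $B$, hence normal completion $\nc E$, and its complement corresponds to $\nc{\sh F} - \sh F$ under the bijection on closed points, so $E_0 = E$). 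This is easily repaired and does not affect correctness or the complexity claim.
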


\begin{algo} \label{a:topos-pushforward}
  Write $K$, $L$, for the function fields of $X$, $Y$, respectively, let $\sh F$ be an \'etale and separated $Y$-scheme (with normal completion $\nc{\sh F}$), and let $B$ denote its corresponding $L$-algebra.
  Let $A$ be the Weil restriction of $B$ from $L$ to $K$.
  Compute the $L$-algebra isomorphism $A \otimes_K L \to B$, and therefore a morphism $A \to B$.
  Let $\nc{\sh F}'$ denote the corresponding normal proper curve, and $\nc{\sh F} \to \nc{\sh F}'$ be the corresponding morphism.
  Output $\nc{\sh F}'$ together with the image of the complement of $\sh F$ in $\nc{\sh F}$.
\end{algo}

\begin{proof}
  The output is correct since the output $\sh F'$ needs to satisfy $\sh F' \times_X Y = \sh F$, and since taking Weil restrictions sends finite separable $L$-algebras to finite separable $K$-algebras, and is left adjoint to base changing from $K$ to $L$.
\end{proof}

\subsection{Presentation of torsors}

Now note that in \autoref{s:groupoid}, we have a second presentation of a $j_!\sh G$-torsor on $X_{k^{\sep},\et}$ (the first being as a geometric point on the groupoid scheme $\mdl R \rightrightarrows \mdl U$ constructed in the previous section).
First, any $j_!\sh G$-torsor is representable by an \'etale separated $X$-scheme and therefore by a normal curve, so we can present a $j_!\sh G$-torsor $T$ by a normal curve together with the group action $j_!\sh G \times_X T \to T$.
In fact, $j_!\sh G$ is representable by the disjoint union of $X$ (acting as the zero section) and $\sh G - 1$ (which is finite \'etale over $U$).
We indicate how to pass between these presentations.

Starting with $x \in \mdl U(l)$, we let $k'$ be the separable closure of $k$ in $l$.
Note that $x$ defines a $\Gamma$-equivariant $A$-torsor $\nc{T}$ on $\nc{Y}_{l,\et}$ together with a $\Gamma$-equivariant section $\nc{Y} \times_{\bb P^1_k} S_0 \to \nc{T}$.
Using the function field presentations, one can then compute quotients under $\Gamma$ using linear algebra, which gives us a $j_!\sh G$-torsor $T$ on $X_{l,\et}$.
Therefore we can compute the corresponding $j_!\sh G$-torsor on $X_{k',\et}$ in arithmetic complexity polynomial in $s^{e+1}, (\#G)^{e+1}, \gamma^{e+1}, [l:k]^{e+1}$.

Conversely, let $T$ be a $j_!\sh G$-torsor on $X_{k',\et}$ with $k'/k$ separable.
Pull $T$ back to a $\Gamma$-equivariant $G$-torsor on $Y_{k',\et}$, together with $\Gamma$-equivariant section $Y_{k'} \times_{\bb P^1_k} S_0 \to Y_{k'}$, and compute a smooth completion.
Now some linear algebra suffices to compute the additional data (see \autoref{h:groupoid-torsor} and \autoref{h:groupoid-smooth}) required to obtain a point of $\mdl U(k^\alg)$ in arithmetic complexity polynomial in $s^{e+1}, (\#G)^{e+1}, \gamma^{e+1}, [k':k]^{e+1}$.

Therefore, using this, \autoref{d:points-cohomology} and \autoref{d:points-isom}, we have the following.

\begin{cor} \label{d:module-cohomology}
  \autoref{a:module-cohomology} takes as input \autoref{s:groupoid} (but with $\sh A = \sh G$ a sheaf of abelian groups, so $A = G$ is abelian) and computes the $\Gal(k^\sep/k)$-module $\hl^1(X_{k^\sep,\et},j_!\sh A)$ in arithmetic complexity
  \[
    \expt\pth*[Big]{O\pth*[big]{(e+1)^3s^{16}(\#A)^{16}\gamma^4 \log(s\#A)^3}}.
  \]
\end{cor}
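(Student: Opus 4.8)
The underlying finite $\Gal(k^\sep/k)$-set of $\hl^1(X_{k^\sep,\et},j_!\sh A)$, together with a representing point $\Spec l_h\to\mdl U$ for each class $h$, is already produced by \autoref{d:points-cohomology}; so the plan is to call that algorithm first, and then to equip the resulting set with its abelian group structure. Since $\sh A$ is abelian this structure is the \emph{Baer sum} of $j_!\sh A$-torsors, and I would compute its full addition table using the contracted product of torsors together with the passage back and forth between points of $\mdl U$ and honest torsors set up in \autoref{h:reduction}.

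To add two classes $h_1,h_2$, I would first convert their representing points into genuine $j_!\sh A$-torsors $T_1,T_2$ on $X_{k',\et}$, where $k'$ is the separable closure of $k$ in a common extension of $l_{h_1}$ and $l_{h_2}$; this is the conversion from a point of $\mdl U$ to a torsor described in \autoref{h:reduction} (forming the $\Gamma$-equivariant $A$-torsor and taking the quotient under $\Gamma$ by linear algebra over the relevant function fields), of arithmetic complexity polynomial in $s^{e+1}$, $(\#A)^{e+1}$, $\gamma^{e+1}$, and $[k':k]^{e+1}$. As $\sh A$ is abelian, the class $h_1+h_2$ is represented by the contracted product $(T_1\times_X T_2)/\sh A$ with $\sh A$ acting antidiagonally, which is again a $j_!\sh A$-torsor; I would compute this fibre product and quotient with the same flavour of function-field linear algebra. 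Converting the result $T_{12}$ back to a point of $\mdl U(k^\alg)$ (computing a smooth completion and the rigidifying data at $0$ and $\infty^{(2)}$ as in \autoref{h:groupoid-torsor} and \autoref{h:groupoid-smooth}), I would then run \autoref{d:points-isom} and test, via \autoref{d:points-intersection}, emptiness of the separable part of the relevant Isom-algebra to decide which already-known class $h$ the torsor $T_{12}$ lies in. Ranging over all ordered pairs fills in the addition table; the zero element is the class of the trivial torsor and inverses are then immediate. Together with the Galois action returned by \autoref{d:points-cohomology}, this exhibits $\hl^1(X_{k^\sep,\et},j_!\sh A)$ as a $\Gal(k^\sep/k)$-module.

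For the complexity, the number of classes $\#\hl^1=\#\pi_0(\mdl U_{k^\sep})$ is bounded, by a B\'ezout estimate on the $O(s^4(\#A)^4\gamma)$-dimensional ambient affine spaces of the $\mdl U_b$ (whose defining polynomials have degree at most $s\#A$), by $\expt\pth*[big]{O\pth*[big]{s^4(\#A)^4\gamma\log(s\#A)}}$; this is exactly the extra $s^4(\#A)^4\gamma$ factor appearing in the exponent over \autoref{d:points-cohomology}. Computing the whole table costs $(\#\hl^1)^2$ Baer sums, each dominated by the polynomial cost of the two conversions and of \autoref{d:points-isom}, while the accumulated factors of $(e+1)$ come from the purely inseparable descents (separable closures, primary decompositions, and smooth completions) performed at every conversion. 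Combining these with the initial call to \autoref{d:points-cohomology} yields the stated bound.

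The step I expect to be the main obstacle is the explicit, size-controlled computation of the contracted product in the vector-bundle and function-field presentations. The fibre product $T_1\times_X T_2$ roughly multiplies the degrees, and the antidiagonal quotient by $\sh A$ followed by the smooth completion must be arranged so that the type of the resulting standard module stays within the bounds established in the analysis of \autoref{s:groupoid}; simultaneously the construction has to remain functorial through both conversions, so that the isomorphism class of $T_{12}$ is identified with the correct element of $\hl^1$. Keeping the degrees of the intermediate extensions $[l_h:k]$ and the heights occurring in the function-field data under control across these operations is the real technical crux.
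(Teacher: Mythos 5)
Your algorithm is the same as the paper's: represent classes by points of $\mdl U$, convert them to honest $j_!\sh A$-torsors over a separable extension $k'$, form the antidiagonal quotient of $T_1 \times_{X_{k'}} T_2$ by linear algebra over $k'(x)$, and identify the resulting class via \autoref{a:points-isom}. The gap is in the complexity analysis, which is where the paper's proof actually lives. The stated bound comes from substituting an estimate for $\log[l:k]$ (the degree of the minimal Galois extension through which the action on $\hl^1(X_{k^\sep,\et},j_!\sh A)$ factors) into the term $(e+1)\log[l:k]$ of \autoref{d:points-cohomology}, and you never bound this quantity. The paper does it as follows: the cardinality $M$ of $\hl^1(X_{k^\sep,\et},j_!\sh A)$ is $\expt\pth*[Big]{O\pth*[big]{s^8(\#A)^8\gamma^2\log(s\#A), es^4(\#A)^4\gamma\log(s\#A)}}$ by \autoref{d:points-base}, and since this set is an abelian group, $\Gal(l/k)$ embeds into its automorphism group, of order at most $M^{\log_2 M}$ (this is exactly the group-scheme clause of \autoref{d:points-minimal-galois}), whence $\log[l:k] = O\pth*[big]{(\log M)^2} = O\pth*[big]{(e+1)^2s^{16}(\#A)^{16}\gamma^4\log(s\#A)^2}$. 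That squaring of $\log M$ is the true source of the jump from $s^{12}$ to $s^{16}$.

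Your attribution of the extra exponent factor $s^4(\#A)^4\gamma$ to the number of classes cannot work: the cost of filling in the addition table is (number of pairs) times (cost per pair), and since both are exponentials their exponents \emph{add} rather than multiply, so an additional term of order $s^4(\#A)^4\gamma\log(s\#A)$ in the exponent is absorbed by the $s^{12}(\#A)^{12}\gamma^3\log(s\#A)^3$ already present in \autoref{d:points-cohomology} and cannot produce $s^{16}$. (Your B\'ezout count of the classes is fine in itself, and is in fact sharper than the bound the paper uses for $M$; it is just not what drives the final estimate.) Once the bound on $\log[l:k]$ is supplied, the rest of your argument --- per-pair conversions polynomial in $s^{e+1}$, $(\#A)^{e+1}$, $\gamma^{e+1}$, $[k':k]^{e+1}$, followed by identification via \autoref{a:points-isom} --- goes through and matches the paper.
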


\begin{algo} \label{a:module-cohomology}
  It remains to compute addition of classes of $j_!\sh A$-torsors on $X_{k^\sep,\et}$.
  For this, it suffices to note that if $T_1, T_2$ are $j_!\sh A$-torsors on $X_{k',\et}$ with $k'/k$ separable, then the sum of the classes of $T_1$ and $T_2$ in $\hl^1(X_{k^\sep,\et},j_!\sh A)$ is given by the quotient of $T_1 \times_{X_{k'}} T_2$ by the $j_!\sh A$-action given by $a(t_1,t_2) = (at_1,a^{-1}t_2)$;  compute this using linear algebra over $k'(x)$, and find the element in $\hl^1(X_{k^\sep,\et},j_!\sh A)$ isomorphic to it using \autoref{a:points-isom}.
\end{algo}

\begin{proof}
  It remains to find an upper bound for $\log[l:k]$.
  Note that the cardinality of the set $\hl^1(X_{k^\sep,\et},j_!\sh A)$ is
  \[
    \expt\pth*[Big]{O\pth*[big]{s^8(\#A)^8\gamma^2 \log(s\#A),es^4(\#A)^4\gamma \log(s\#A)}}
  \]
  by \autoref{d:points-base}.
  As this set is an abelian group, it follows that 
  \[
    \log[l:k] = O\pth*[big]{(e+1)^2s^{16}(\#A)^{16}\gamma^4 \log(s\#A)^2},
  \]
  from which the arithmetic complexity follows.
\end{proof}

\subsection{Reduction to \autoref{s:groupoid}}

We will now indicate how to reduce to \autoref{s:groupoid} for ``most'' smooth connected curves $X$ over $k$, non-empty open subschemes $U$ of $X$, and finite locally constant sheaves $\sh A$ on $U_{\et}$.
We would like to use explicit computation of Riemann-Roch spaces as in \cite{hess02} to compute a suitable cover of $X$ over $\bb P^1_k$, however, this requires the curve to be given as a generically \'etale finite locally free $\bb P^1_k$-scheme (or equivalently, a separating element for the function field of $X$ must be given).

\begin{prop} \label{d:reduction-separator}
  \autoref{a:reduction-separator} takes as input a normal proper connected curve $X$ over $k$ of type $a$, and computes a finite purely inseparable extension $l/k$, a finite locally free purely inseparable morphism $X' \to X$ of degree at most $[X:\bb P^1_k]^{e+2}$, and a generically \'etale finite locally free morphism $X' \to \bb P^1_l$ of degree at most $[X:\bb P^1_k]$, in arithmetic complexity polynomial in $\sum_i -a_i$ and $[X:\bb P^1_k]^{e+2}$.
\end{prop}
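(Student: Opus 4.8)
The plan is to remove, by a purely inseparable base change, the inseparability of the projection $X \to \bb P^1_k$ given by the coordinate $x$. Write $K = k(X)$ and $s = [X:\bb P^1_k]$, so that $s$ is the length of $a$. The failure of $X \to \bb P^1_k$ to be generically \'etale is exactly the inseparability of $K/k(x)$, which I would split into a \emph{geometric} part --- the inseparable degree $p^{r_0}$ of $\overline K/k^{\alg}(x)$, where $\overline K$ is the function field of the normalisation of $(X_{k^{\alg}})_{\red}$ --- and an \emph{arithmetic} part coming from inseparable constants. The geometric part I would remove by a Frobenius base change on $\bb P^1_k$, and the arithmetic part by passing to a smooth completion over a purely inseparable extension $l/k$.

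Concretely, \autoref{a:reduction-separator} would proceed as follows. First it fixes an integer $r \geq r_0$; one may take $r$ to be the inseparable degree of $K/k(x)$ (a separable-closure computation over the field $k(x)$, by the method of \autoref{h:computation}), so that $p^r \mid s$ and hence $p^r \leq s$. It then forms the fibre product of $X \to \bb P^1_k$ along the $p^r$-power map $\phi_r \colon \bb P^1_k \to \bb P^1_k$, $z \mapsto z^{p^r} = x$, which is purely inseparable of degree $p^r$ over $k$; this is a functorial substitution on the standard-module data of \autoref{h:standard-modules}. Reducing and normalising yields an integral normal proper curve $X_1 \to \bb P^1_k$ with coordinate $z$, still over $k$. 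Finally it computes a smooth completion $X' \to X_1$ over a purely inseparable extension $l/k$, as in the construction of smooth completions above. The outputs are $l$, the composite $X' \to X_1 \to X$ (finite and purely inseparable, hence finite locally free since $X$ is regular of dimension one), and the projection $X' \to \bb P^1_l$ via $z$.

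Generic \'etale-ness of $X' \to \bb P^1_l$ may be checked after base change to $k^{\alg}$. There $x = \zeta^{p^{r_0}}$ for a separating element $\zeta$ of $\overline K$; since $z^{p^r} = x = \zeta^{p^{r_0}}$, injectivity of Frobenius gives $\zeta = z^{p^{r-r_0}}$ in the normalisation, and $\overline K(z) = \overline K \otimes_{k^{\alg}(\zeta)} k^{\alg}(z)$ is the base change of the separable extension $\overline K/k^{\alg}(\zeta)$ along the purely inseparable extension $k^{\alg}(z)/k^{\alg}(\zeta)$, hence is separable over $k^{\alg}(z)$. Thus $z$ is a separating element and $X' \to \bb P^1_l$ is generically \'etale, of degree the separable degree $[\overline K : k^{\alg}(\zeta)] = [\overline K : k^{\alg}(x)]/p^{r_0} \leq s = [X:\bb P^1_k]$, as base change preserves the degree $s$ while reduction and normalisation do not increase it. Comparing function fields over $k(x)$ then gives $[k(X') : K] \leq p^{r-r_0}[l:k]$. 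The smooth completion requires only $[l:k] \leq s^{e}$ (the purely inseparable extension needed for the underlying normalisation and nilradical computations over $k^{\perf}$, cf.~\autoref{h:computation}), while $p^{r-r_0} \leq p^{r} \leq s$; hence $X' \to X$ has degree at most $s^{e+1} \leq s^{e+2}$, and $X' \to \bb P^1_k$ has degree at most $s^{e+2}$, which bounds the size of the output.

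Every step runs in arithmetic complexity polynomial in $\sum_i -a_i$ and $[X:\bb P^1_k]^{e+2}$: the inseparable-degree (separable-closure) computation is over the degree-$s$ algebra $K$ of height $O(\sum_i -a_i)$ over $k(x)$; the Frobenius base change is a substitution on standard modules; and the reduction, normalisation and smooth completion have the complexities recorded in \autoref{h:computation} and the preceding paragraphs, applied to a curve of degree at most $s^{e+2}$ over $\bb P^1_k$ whose defining data has height polynomial in $\sum_i -a_i$. The main obstacle is the degree bookkeeping forced by the split into geometric and arithmetic inseparability: one must verify that a single base change along $z^{p^r}=x$ with $r \geq r_0$ makes the projection separable, while the smooth completion absorbs the inseparable constants that this base change may create into $l$ without letting $[l:k]$ --- and hence the degree $p^{r-r_0}[l:k]$ of $X' \to X$ --- exceed $s^{e+2}$.
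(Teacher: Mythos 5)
Your proof is correct in substance but takes a genuinely different route from the paper's. The paper's \autoref{a:reduction-separator} does everything in one step: it computes the separable closure $K'$ of $k(x)$ in $K$, takes $q$ to be the \emph{exponent} of the purely inseparable extension $K/K'$ (the minimal $p$-power with $K^q \subseteq K'$), and base changes along $k^{1/q}(x^{1/q}) = k(x)^{1/q}$, i.e.\ adjoins $q$-th roots of the coordinate \emph{and} of the constants simultaneously. Correctness is then immediate with no geometric input: $K^q \subseteq K'$ gives $K \subseteq (K')^{1/q}$, and $(K')^{1/q}$ is separable over $k(x)^{1/q}$ because it is the inverse-Frobenius image of the separable extension $K'/k(x)$; the degree bounds follow from $[L:k^{1/q}(x^{1/q})] \leq [K':k(x)] \leq [K:k(x)]$ and $[L:K] \leq [k(x)^{1/q}:k(x)] = q^{e+1}$. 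You instead split the inseparability into a geometric part, removed by the coordinate-only base change $z^{p^r}=x$ with $p^r$ the full inseparable degree rather than the exponent, and a constant-field part, removed by the smooth completion; this forces you to pass to $k^{\alg}$, exhibit a separating element $\zeta$ with $\zeta^{p^{r_0}}=x$, and verify that the residual inseparability of $K_1/k(z)$ is entirely constant-field inseparability. Your argument for this ($r \geq r_0$; $\zeta = z^{p^{r-r_0}}$ by injectivity of Frobenius; separability over $l(z)$ descending from separability over $k^{\alg}(z)$ once the curve is geometrically reduced over $l$) is sound, and the degree count $[k(X'):K] \leq p^{r-r_0}[l:k]$ does come out right. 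The one point needing care is your bound $[l:k] \leq s^e$: it holds if $l$ is taken to be (essentially) the minimal constant-field extension needed for smoothness, but not obviously for the generic purely inseparable extension that the paper's nilradical subroutine returns, and with a larger $l$ the degree of $X' \to X$ could in principle exceed $s^{e+2}$ for large $e$. In short, the paper's one-step construction buys a much shorter correctness proof and tighter degree control; yours makes the geometric/arithmetic decomposition of the inseparability explicit, at the cost of the extra analysis over $k^{\alg}$.
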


\begin{algo} \label{a:reduction-separator}
  By computing the separable closure $K'$ of $k(x)$ in $K$, compute the minimal $p$-power $q$ such that $f^q \in K'$ for all $f \in K$.
  Let $L = K \cdot k^{1/q}(x^{1/q})$, which is the reduction of $K \otimes_{k(x)} k^{1/q}(x^{1/q})$, and output $l = k^{1/q}$ and $x^{1/q}$, together with a $K$-basis of $L$.
\end{algo}

\begin{proof}
  We note that $q$ is bounded from above by $[K:k(x)]$, and that therefore $L$ can be computed in arithmetic complexity polynomial in $\sum_i -a_i$ and $[K:k(x)]^{e+2}$.
\end{proof}

\begin{prop} \label{d:riemann-roch}
  \autoref{a:riemann-roch} takes as input a smooth connected curve $X$ generically \'etale over $\bb P^1_k$, given by its normal completion $\nc{X}$, a finite set $\set*{Q_1,\dotsc,Q_t}$ of closed points of $\nc{X}$, and an open immersion $j \colon U \to X$, given by a finite set $\set*{P_1,\dotsc,P_s}$ of closed points of $X$, and computes a finite locally free morphism $\pi \colon X \to \bb P^1_k$ with $\pi^{-1}(\infty) = \set*{Q_1,\dotsc,Q_t}$ and $\pi^{-1}(0) \sups \set*{P_1,\dotsc,P_s}$ in arithmetic complexity polynomial in the size of the input.
\end{prop}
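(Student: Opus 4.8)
The plan is to produce $\pi$ from a single nonconstant rational function. Concretely, I would construct an $f$ in the function field $K = k(\nc{X})$ whose poles are exactly the $Q_i$ and which vanishes at every $P_j$, and then take $\pi$ to be the finite locally free morphism $\nc{X} \to \bb P^1_k$ attached to $f$ (restricting to $X$ as asserted). Since $f$ will be nonconstant, this morphism is finite---a nonconstant map from the regular proper curve $\nc{X}$---and flat onto the regular one-dimensional target $\bb P^1_k$, hence finite locally free; its fibre over $\infty$ is the pole set of $f$ and its fibre over $0$ contains the zero set of $f$. Translating $f$ into the finite locally free $\bb P^1_k$-scheme presentation of \autoref{h:standard-modules} is then done by the function-field-to-standard-module procedure recorded in \autoref{h:reduction}, in arithmetic complexity polynomial in the input size.

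First I would form the reduced divisors $D_\infty = \sum_{i=1}^t Q_i$ and $D_0 = \sum_{j=1}^s P_j$ on $\nc{X}$, and fix an integer $m$, polynomial in the input size, that is large in the sense made precise below; here $g = p_a(\nc{X})$. Every $f$ in the Riemann--Roch space $L(m D_\infty - D_0)$ satisfies $\mathrm{ord}_{Q_i}(f) \ge -m$, $\mathrm{ord}_{P_j}(f) \ge 1$, and $\mathrm{ord}_R(f) \ge 0$ for all other closed points $R$; thus any such $f$ automatically has no poles outside $\set*{Q_1,\dotsc,Q_t}$ and a genuine zero at each $P_j$, so that $\pi^{-1}(0) \sups \set*{P_1,\dotsc,P_s}$ holds for free. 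What must still be arranged is that $f$ have an actual pole at \emph{every} $Q_i$. Writing $V_i = L(m D_\infty - D_0 - m Q_i) \subs L(m D_\infty - D_0)$ for the subspace of functions with no pole at $Q_i$, the function I want is any element of $L(m D_\infty - D_0) \setminus \bigcup_i V_i$.

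I would compute $L(m D_\infty - D_0)$ and each $V_i$ as explicit $k$-bases using the Riemann--Roch algorithm of \cite{hess02} (which applies because $\nc{X}$ is given as a generically \'etale, hence separable, cover of $\bb P^1_k$), all in arithmetic complexity polynomial in the input size, and then locate $f$ by linear algebra. The key numerical point is that, for $m$ past the (polynomially bounded) transition values, one has $\deg(m D_\infty - D_0) > 2g-2$ and each index $i$ falls into one of two regimes: either $\deg(m D_\infty - D_0 - m Q_i) < 0$, so $V_i = 0$, or $\deg(m D_\infty - D_0 - m Q_i) > 2g - 2$, so by Riemann--Roch $\dim_k V_i = \dim_k L(m D_\infty - D_0) - m \deg Q_i$; in both cases $V_i$ has codimension at least $m$ in $L(m D_\infty - D_0)$, and in particular is a proper subspace.

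The main obstacle is the existence and deterministic construction of $f \notin \bigcup_i V_i$, which is delicate precisely when $k = \bb F_q$ is finite, since a finite-dimensional $\bb F_q$-space can be a union of finitely many proper subspaces. I would dispose of this by the counting bound: the $t$ subspaces $V_i$, each of codimension at least $m$, cover at most $t\, q^{\dim_k L(m D_\infty - D_0) - m}$ of the $q^{\dim_k L(m D_\infty - D_0)}$ points, so enlarging $m$ until $q^m > t$---still polynomial in the input size---forces the complement to be non-empty; over an infinite $k$ a single generic $k$-linear combination of a basis already lies outside the $V_i$, and in either case a valid $f$ is found in polynomial time. Finally I would note that the standing smoothness hypotheses on $\nc{X}$ from \autoref{s:groupoid} guarantee that $\nc{X}$ is geometrically reduced, so that the finiteness-plus-flatness argument yielding local freeness of $\pi$ goes through over the possibly imperfect field $k$.
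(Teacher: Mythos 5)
Your proposal is correct and follows essentially the same route as the paper: build $f$ in the Riemann--Roch space $L(mZ_\infty - Z_0)$ avoiding the subspaces $L(mZ_\infty - Z_0 - mQ_j)$, with $m$ chosen so these have codimension at least $m$, and convert $f$ to the required presentation. The only difference is how the avoidance is realised deterministically: the paper searches over $\{0,1\}$-coefficient combinations of a basis (each codimension-$m$ subspace contains at most $2^{\#B-m}$ of the $2^{\#B}$ such vectors, and $2^m>t$), which works uniformly over every field in the black-box model, whereas your appeal to a ``generic'' combination over infinite $k$ needs a little more care when $k$ has small positive characteristic and few explicitly constructible scalars.
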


\begin{algo} \label{a:riemann-roch}
  Write $Z_0 = \sum_{i=1}^s P_i$, $Z_\infty = \sum_{j=1}^t Q_j$, and let $g$ be the genus of $X$.
  Let $m$ be the smallest integer such that $m(t-1) - s > 2g - 2$ and $2^m > t$.
  Using \cite{hess02}, compute a $k$-basis $B$ for $\sh O_X(-Z_0 + mZ_\infty)$, and compute the subspaces $\sh O_X(-Z_0 + mZ_\infty - mQ_j)$ for $j=1,\dotsc,t$.
  Find a linear combination $f = \sum_{b \in B} \epsilon_b b$ with $\epsilon_b \in \set*{0,1}$ for all $b \in B$ such that $f$ is not in any of the $\sh O_X(-Z_0 + mZ_\infty - mQ_j)$ for $j=1,\dotsc,t$.

  Compute a minimal polynomial for $f$ over $k(x)$, and write it as a minimal polynomial for $x$ over $k(f)$, and compute successively a $k(f)$-basis for $k(x,f)$ and a $k(f)$-basis for $k(X)$.
  Output the corresponding map $X \to \bb P^1_k$.
\end{algo}

\begin{proof}
  Note that by choice of $m$, we see that $\sh O_X(-Z_0 + mZ_\infty - mQ_j)$ has dimension $\#B - m$, so $f$ ranges over a set of $2^{\#B}$ elements, of which at most $t2^{\#B-m}$ lie in one of the given subspaces.
  By choice of $m$, we have $t2^{\#B-m} < 2^{\#B}$, so there exists such $f$ not lying in any of the given subspaces.
\end{proof}

In particular, if either $U = X$ or $X = \nc{X}$, then we can get a finite locally free morphism $\nc{X} \to \bb P^1_k$ with $U$ and $X$ inverse images of $\bb P^1_k$, $\bb P^1_k - 0$, or $\bb P^1_k - \infty$.
Therefore, using smooth completions, we now have an obvious reduction to \autoref{s:groupoid}, and therefore the following corollary, which in turn implies \autoref{d:main}.

\begin{cor}
  Let $S_0 \in \set*[big]{\emptyset,0}$ and $S_\infty \in \set*[big]{\emptyset,\infty}$.
  There is an algorithm that takes a finite locally free $\bb P^1_k - S_\infty$-scheme $X$, smooth over $k$, and a finite \'etale commutative group scheme $\sh A$ over $U = X \times_{\bb P^1_k} (\bb P^1_k - S_0 - S_\infty)$, and computes the $\Gal(k^\sep/k)$-module $\hl^1(X_{k^\sep,\et},j_!\sh A)$ in arithmetic complexity exponential in $e$, $[X:\bb P^1_k - S_\infty]$, $[\sh A:U]^{\log [\sh A:U]}$, $\gamma_X$, and $\gamma_{\sh A}$.
  Here, $\gamma_X$ (resp.~$\gamma_{\sh A}$) is $\sum_i -a_i$, where $a$ is the type of the normal completion of $X$ (resp.~$\sh A$).
\end{cor}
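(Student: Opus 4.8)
The plan is to assemble the machinery developed above: reduce the pair $(X,\sh A)$ to an instance of \autoref{s:groupoid}, apply \autoref{d:module-cohomology}, and then verify that the resulting parameters give the advertised complexity. We may assume $X$ (and hence $U$) connected, treating connected components separately otherwise. The reduction requires two constructions. First, we must exhibit a finite locally free $\bb P^1_k$-scheme $\nc X$, smooth over $k$, of which $X$ is the inverse image of $\bb P^1_k - S_\infty$; since $X$ already comes with a map to $\bb P^1_k - S_\infty$, this is only a completion at $\infty$. Second, and more substantially, we must produce the finite \'etale Galois cover $g \colon V \to U$ with group $\Gamma$ that trivialises $\sh A$, together with the normal completion $\nc Y$ of $V$; it is here that the term $[\sh A:U]^{\log[\sh A:U]}$ in the complexity arises.

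I would proceed as follows. If $S_\infty = \emptyset$ then $X$ is already proper and smooth and $\nc X = X$; if $S_\infty = \infty$ then $X$ is affine over $\bb A^1_k$, and I would compute its smooth completion $\nc X$ over $\bb P^1_k$ by passing to the function field, computing the maximal order at $\infty$, and gluing as in \autoref{h:reduction}, performing a purely inseparable base change to enforce smoothness over $k$ if needed. Next, regarding $\sh A \to U$ as the finite \'etale cover representing a $\pi_1(U)$-module $G$, I would compute its Galois closure $V \to U$: since $\sh A$ carries a group structure its monodromy image $\Gamma$ lies in $\Aut(G)$, and $V$ may be computed as a connected component of an iterated fibre power of $\sh A$ over $U$ (equivalently, via the Galois closure of $k(\sh A)$ over $k(U)$ using the function-field arithmetic of \autoref{h:reduction}). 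Completing $V$ to $\nc Y$ in the same way, again with a purely inseparable base change if necessary, produces exactly the cartesian diagram and hypotheses of \autoref{s:groupoid}, with $G$ recorded together with its $\Gamma$-action; validity of the input (smoothness, \'etaleness, the group-scheme axioms) is checked en route using the criteria of \autoref{h:groupoid-torsor} and \autoref{h:groupoid-smooth}. Feeding this instance into \autoref{d:module-cohomology} then returns $\hl^1(X_{k^\sep,\et},j_!\sh A)$ as a $\Gal(k^\sep/k)$-module.

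It then remains to translate the bound of \autoref{d:module-cohomology}, which is exponential in $e$, the length $s$ of the type of $\nc Y$, the order $\#G$, and $\gamma_{\nc Y}$, into the stated one. Write $m = [X:\bb P^1_k - S_\infty]$ and $n = [\sh A:U] = \#G$. Base change does not change degrees, so $s = [\nc Y:\bb P^1_k] = \#\Gamma \cdot m$. As $G$ is abelian of order $n$ it is generated by at most $\log_2 n$ elements, whence $\#\Gamma \le \#\Aut(G) \le n^{\log_2 n}$; thus $s \le m \, n^{\log_2 n}$, and exponentiating a polynomial in $s$ and $\#G$ gives something exponential in $m$ and in $n^{\log n} = [\sh A:U]^{\log[\sh A:U]}$. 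For the remaining parameter I would use $\gamma_{\nc Y} = s - 1 + p_a(\nc Y)$ and bound $p_a(\nc Y)$ by Riemann--Hurwitz for $\nc Y \to \nc X$: this map is \'etale over $U$, so it ramifies only over the at most $2m$ points of $\nc X$ lying above $\{0,\infty\}$, and its different there is controlled by that of $\nc{\sh A} \to \nc X$, which Riemann--Hurwitz for $\nc{\sh A} \to \nc X$ expresses through $\gamma_{\sh A}$, $\gamma_X$, and $n$. This yields a bound on $\gamma_{\nc Y}$ polynomial in $\#\Gamma$, $m$, $\gamma_X$, and $\gamma_{\sh A}$, so the whole expression is exponential in $e$, $m$, $n^{\log n}$, $\gamma_X$, and $\gamma_{\sh A}$, as required.

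The main obstacle I anticipate is exactly this final genus estimate. The degree blow-up $\#\Gamma \le n^{\log n}$ is clean, but controlling $p_a(\nc Y)$ demands care with ramification: over a boundary point the inertia of the Galois closure is generated by conjugates of the inertia of $\nc{\sh A}$, and in the possibly wildly ramified case (the order of $\Gamma$ need not be invertible in $k$) one cannot use the tame formula but must instead bound the different of $\nc Y \to \nc X$ by a $\#\Gamma$-multiple of that of $\nc{\sh A} \to \nc X$. Once $p_a(\nc Y)$ is bounded polynomially in the input data the remaining substitutions into \autoref{d:module-cohomology} are routine, and the purely inseparable base changes affect only the imperfectness parameter $e$, which already appears in the target complexity.
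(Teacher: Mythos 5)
Your overall architecture matches the paper's: construct $\nc X$ by smooth completion, build the trivialising Galois cover $V\to U$ as a connected component of an iterated fibre power of $\sh A$ over $U$, complete to $\nc Y$, feed the resulting instance of \autoref{s:groupoid} into \autoref{d:module-cohomology}, and bound the parameters. Your degree bound $s\le m\,n^{\log_2 n}$ (via $\#\Gamma\le\#\Aut(G)\le n^{\log_2 n}$) agrees with the paper's, which gets the same bound by counting the at most $\log_2 n$ base-change steps, each multiplying the degree by at most $n$.

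Where you diverge is the bound on $\gamma_{\nc Y}$, and this is exactly the point you flag as your ``main obstacle.'' The paper does not use Riemann--Hurwitz for $\nc Y\to\nc X$ at all. Instead it tracks the types of the underlying $\sh O_{\bb P^1_k}$-modules through the construction: a connected component is a direct summand, so taking one does not increase $\max_i(-a_i)$; the fibre product is a quotient of the tensor product, so each base-change step at most doubles $\max_i(-a_i)$; normalisation does not increase $\sum_i(-a_i)$ (by the remark in \autoref{h:reduction}). After $\log_2 n$ steps this gives $\sum_j(-b_j)\le t\cdot n\cdot\gamma_{\sh A}$ directly, with no ramification theory. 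This entirely sidesteps the difficulty you correctly identify: $\Gamma\subseteq\Aut(G)$ need not have order invertible in $k$, so $\nc Y\to\nc X$ may be wildly ramified, and your proposed estimate --- that the different of $\nc Y\to\nc X$ is bounded by a $\#\Gamma$-multiple of that of $\nc{\sh A}\to\nc X$ --- is asserted rather than proved. Such a bound can be extracted from the behaviour of differents under composita of the conjugates of $k(\sh A)/k(X)$, so your route is salvageable, but as written the crucial quantitative step is missing; the paper's purely module-theoretic bookkeeping is both simpler and immune to wild ramification. If you keep your route, supply the compositum lemma for differents explicitly; otherwise, switch to bounding the types as above.
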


\begin{proof}
  We need to prove that the size of the cover $\nc{Y}$ constructed is polynomial in $[X:\bb P^1_k - S_\infty]$, $[\sh A:U]^{\log [\sh A:U]}$, $\gamma_X$, and $\gamma_{\sh A}$.
  Recall that $\nc{Y}$ is constructed by setting $X' = X$, $\sh A' = \sh A$ and then repeatedly base changing $\sh A'/X'$ to a non-trivial connected component of $\sh A'$.

  For each such base change, let $a$ be the type of $\sh A'$, let $a'$ be the type of the chosen connected component of $\sh A'$, and let $a''$ be the type of their fibre product $\sh A''$ over $X$.
  Note that as then $\sh O_{\bb P^1_k}(a')$ is a direct summand of $\sh O_{\bb P^1_k}(a)$, we have $\max_j -a'_j \leq \max_i -a_i$.
  Moreover, as $\sh O_{\bb P^1_k}(a) \otimes_{\sh O_{\bb P^1_k}} \sh O_{\bb P^1_k}(a')$ surjects onto $\sh O_{\bb P^1_k}(a'')$, it follows that $\max_k -a''_k \leq \max_{i,j} -a_i-a'_j \leq 2 \max_i -a_i$.
  Since $\log_2 [\sh A:U]$ such base changes suffice for the construction of a finite locally free $X$-scheme of which the normalisation is $\nc{Y}$, it follows that for the type $b$ of $\nc{Y}$, its length $t$ is at most $[X:\bb P^1_k - S_\infty][\sh A:U]^{\log_2 [\sh A:U]}$, and $\sum_j -b_j \leq t \max_j b_j \leq t [\sh A:U] \gamma_{\sh A}$.

  The result now follows from \autoref{d:module-cohomology}.
\end{proof}

\subsection{Application to computation for constructible sheaves}

In this section, we will indicate how to compute $\hl^1(X_{k^\sep,\et},\sh A)$ for $X$ a smooth connected curve and $\sh A$ an arbitrary constructible sheaf of abelian groups, of torsion invertible in $k$, under the following assumptions.
We will assume a presentation of constructible sheaves (and morphisms between them) to be given, with respect to which one can perform certain operations.
These operations are:
\begin{itemize}
  \item one can compute finite direct sums of constructible sheaves;
  \item one can compute kernels and cokernels of morphisms;
  \item for a constructible sheaf $\sh A$, one can compute a non-empty open subscheme $U$ of $X$ such that $\sh A|_U$ is finite locally constant;
  \item for a closed immersion $i \colon Z \to X$ and its open complement $j \colon U \to X$, one can compute the functors $i^{-1}, i_*, i^!, j_!, j^{-1}, j_*$ and the corresponding units and counits of adjunction; given $\sh A_Z$ on $Z_{\et}$, $\sh A_U$ on $U_{\et}$, and $\phi \colon \sh A_Z \to i_* j^{-1} \sh A_U$, one can compute the corresponding constructible sheaf on $X_{\et}$.
\end{itemize}

In theory, one should be able to give such a presentation using recollement (as done in \autoref{h:torsors} for $j_!\sh A$-torsors), but we will not work this out in this paper.

So suppose $X$ is a smooth connected curve, and $\sh A$ is a constructible sheaf of abelian groups on $X_{\et}$, of torsion invertible in $k$.
Let $U$ be a non-empty open subscheme for which $\sh A|_U$ is finite locally constant.
Use \autoref{a:reduction-separator} and \autoref{a:riemann-roch} to find $l/k$ finite purely inseparable, $V \subs U_l$ open and a finite locally free morphism $\nc{X}_l \to \bb P^1_l$ such that $V$ and $X_l$ are finite locally free over their images in $\bb P^1_l$.
Write $j \colon V \to X_l$ for the inclusion, and write $i \colon Z \to X_l$ for its closed complement.

Compute the canonical short exact sequence $0 \to j_!j^{-1} \sh A \to \sh A \to i_*i^{-1} \sh A \to 0$, and the morphism $\delta(i,j) \colon \hl^0(X_{k^\sep,\et},i_*i^{-1} \sh A) \to \hl^1(X_{k^\sep,\et},j_!j^{-1}\sh A)$ which sends a section of $i^{-1} \sh A$ to the constructible sheaf defined by $j^{-1} \sh A$ on $V_{\et}$, $0$ on $Z_{\et}$, and the section of $i^{-1}j_*j^{-1} \sh A$ obtained from the given section of $i^{-1} \sh A$ by composition with $i^{-1}\sh A \to i^{-1}j_*j^{-1}\sh A$.
Then, as $\hl^1(X_{k^\sep,\et},i_*i^{-1}\sh A) = 0$, we see that $\hl^1(X_{k^\sep,\et},\sh A) = \coker \delta(i,j)$.

This is independent of the choice of $(i,j)$ in the following sense.
If $j' \colon V' \to X_l$ and $i' \colon Z' \to X_l$ are given, with $V' \subs V$ and a given morphism $Z \to Z'$ over $X_l$, then we can compute a commutative diagram
\[
  \begin{tikzcd}
    0 \ar[r] & j'_!(j')^{-1} \sh A \ar[r] \ar[d] & \sh A \ar[r] \ar[d,equals] & i'_*(i')^{-1} \sh A \ar[r] \ar[d] & 0 \\
    0 \ar[r] & j_!j^{-1} \sh A \ar[r] & \sh A \ar[r] & i_*i^{-1} \sh A \ar[r] & 0
  \end{tikzcd}
\]
a commutative diagram
\[
  \begin{tikzcd}
    \hl^0(X_{k^\sep,\et},i'_*(i')^{-1} \sh A) \ar[r] \ar[d] & \hl^1(X_{k^\sep,\et},j'_!(j')^{-1} \sh A) \ar[d] \\
    \hl^0(X_{k^\sep,\et},i_*i^{-1} \sh A) \ar[r] & \hl^1(X_{k^\sep,\et},j_!j^{-1} \sh A)
  \end{tikzcd}
\]
and therefore the morphism $\coker \delta(i',j') \to \coker \delta(i,j)$ corresponding to the identity map on $\hl^1(X_{k^\sep,\et},\sh A)$.

In the same way, we see that for constructible sheaves $\sh A$, $\sh B$ on $X_{\et}$, and a morphism $\sh A \to \sh B$, we can compute the induced morphism $\hl^1(X_{k^\sep,\et},\sh A) \to \hl^1(X_{k^\sep,\et},\sh B)$, and that for a morphism $f \colon Y \to X$ of smooth connected curves, we can compute the pullback $\hl^1(X_{k^\sep,\et},\sh A) \to \hl^1(Y_{k^\sep,\et},f^{-1}\sh A)$.

\bibliographystyle{abbrvnat}
\bibliography{ECC}

\end{document}